\documentclass[12pt]{amsart}
\usepackage{amssymb,latexsym,amsmath,amsthm,amscd}
\usepackage{setspace}
\usepackage{array}
\usepackage{float}
\usepackage[all]{xy} \xyoption{arc}
\usepackage[left=3cm,top=2cm,right=3cm,bottom = 2cm]{geometry}
\usepackage{graphicx}
\usepackage[usenames,dvipsnames]{xcolor}
\usepackage{subfig}
\usepackage{charter}
\usepackage{hyperref}

\theoremstyle{plain}
\newtheorem{thm}{Theorem}
\newtheorem{lem}[thm]{Lemma}
\newtheorem{prop}[thm]{Proposition}
\newtheorem{cor}[thm]{Corollary}

\theoremstyle{definition}
\newtheorem{dfn}[thm]{Definition}
\newtheorem{ex}[thm]{Example}

\theoremstyle{remark}
\newtheorem{rmk}[thm]{Remark}







\DeclareMathOperator{\uhp}{\mathcal{H}}

\DeclareMathOperator{\Tr}{Tr}

\DeclareMathOperator{\Aut}{Aut}

\DeclareMathOperator{\GL}{GL}
\DeclareMathOperator{\PGL}{PGL}
\DeclareMathOperator{\PSL}{PSL}
\DeclareMathOperator{\SL}{SL}

\DeclareMathOperator{\Ind}{Ind}

\DeclareMathOperator{\diag}{diag}
\DeclareMathOperator{\Li}{Li}

\newcommand*{\df}{\mathrel{\vcenter{\baselineskip0.5ex \lineskiplimit0pt
                     \hbox{\scriptsize.}\hbox{\scriptsize.}}} =}


\providecommand{\abs}[1]{\left\lvert#1\right\rvert}

\providecommand{\twomat}[4]{\left(\begin{array}{cc}#1&#2\\#3&#4\end{array}\right)}
\providecommand{\stwomat}[4]{\left(\begin{smallmatrix}#1&#2\\#3&#4\end{smallmatrix}\right)}


\newcommand{\QQ}{\mathbf{Q}}

\newcommand{\CC}{\mathbf{C}}

\newcommand{\ZZ}{\mathbf{Z}}
\newcommand{\PP}{\mathbf{P}}
\newcommand{\RR}{\mathbf{R}}

\DeclareMathOperator{\diagg}{diag}
\DeclareMathOperator{\Vir}{Vir}
\DeclareMathOperator{\qchar}{q-char}
\DeclareMathOperator{\ch}{\mathfrak{ch}}
\DeclareMathOperator{\Rad}{Rad}
\DeclareMathOperator{\VB}{VB}

\pagenumbering{arabic}
\pagestyle{headings}
\setcounter{secnumdepth}{4}
\setcounter{tocdepth}{1}
\setlength{\parindent}{1cm}

\begin{document}
\title{Classification of some vertex operator algebras of rank $3$}
\author{Cameron Franc}
\email{franc@math.usask.ca}
\thanks{The first listed author was supported by an NSERC Discovery Grant.}
\author{Geoffrey Mason}
\email{gem@ucsc.edu}
\thanks{The second listed author was supported by grant  $\# 427007$ from the Simons Foundation.}
\date{}

\begin{abstract}
  We discuss the classification of strongly regular vertex operator algebras (VOAs) with exactly three simple modules whose character vector satisfies a monic modular linear differential equation with irreducible monodromy. Our Main Theorem \ref{thmmain} provides a classification of all such VOAs in the form of one infinite family of affine VOAs, one individual affine algebra and two Virasoro algebras, together with a family of eleven exceptional character vectors and associated data that we call the $U$-series. We prove that there are at least $15$ VOAs in the $U$-series occurring as commutants in a Schellekens list holomorphic VOA. These include the affine algebra $E_{8,2}$ and H\"ohn's Baby Monster VOA $\VB^\natural_{(0)}$ but the other $13$ seem to be new. The idea in the proof of our Main Theorem is to exploit properties of a family of vector-valued modular forms with rational functions as Fourier coefficients, which solves a family of modular linear differential equations in terms of generalized hypergeometric series.
\end{abstract}
\maketitle
\tableofcontents

\section{Introduction and statement of the Main Theorem}\label{SI}
It is a natural problem to classify ($2$-dimensional) rational conformal field theories, which we  conflate with the classification of (chiral) rational vertex operator algebras $V$ (VOAs).\ In order to do this one needs some \emph{invariants} of $V$. They should be computable and yet capable of reflecting enough of the structure of $V$ so that they can distinguish between isomorphism classes of VOAs, or at least come close to this ideal. In fact we work with \emph{strongly regular} VOAs $V$ \cite{MasonLattice}. Among other properties, these are simple VOAs of CFT-type which are also rational and $C_2$-cofinite.\ In particular, they have only finitely many (isomorphism classes of) simple modules.

Before continuing, let us develop some notation. If $V$ has $n$  simple modules $V\df M_0$, $M_1$, $\ldots$, $M_{n-1}$ it is convenient to say that \emph{$V$ has rank $n$}. The $q$-character of $M_i$ is defined in the usual way, namely
\[
f_i(\tau)= \Tr_{M_i} q^{L(0)-c/24}.
\]
Notation here is standard, and in particular $V$ has central charge $c$, $\tau$ lies in the complex upper half-plane $\uhp$, and $q\df e^{2\pi i \tau}$. The \emph{character vector} of $V$ is the $n$-vector
\begin{eqnarray*}
F(\tau)\df(f_0, f_1, \hdots, f_{n-1})^T,
\end{eqnarray*}
and we let $\ch_V$ denote the \emph{span} of the $f_i(\tau)$. By Zhu's Theorem \cite{Zhu}, $\ch_V$ is a right $\Gamma$-module, where $\Gamma\df \SL_2(\ZZ)$ and the action is induced by $\gamma : f_i(\tau)\mapsto f_i(\gamma\tau)$  ($\gamma \in \Gamma$).

Another way to state these facts is in the language of \emph{vector-valued modular forms} (VVMFs): there is representation $\rho : \Gamma\rightarrow \GL_n(\CC)$ such that
\[
F(\gamma\tau)=\rho(\gamma)F(\tau),
\]
which says that $F(\tau)$ is a VVMF of weight zero on $\Gamma$. For a survey of VVMFs, including their connections to Riemann-Hilbert type problems (which we consider below) but \emph{not} their applications to VOAs, we may refer the reader to \cite{FM2}.

A striking  property of the character vector is its \emph{modularity} \cite{Huang}, which may be stated as follows: the kernel of $\rho$ is a \emph{congruence subgroup} of $\Gamma$. This entails that each $q$-character $f_i(\tau)$ is a modular function of weight zero on some congruence subgroup of $\Gamma$. One might therefore think that the character vector could serve as a good invariant for $V$ of the type we are seeking. In fact experience shows that there is a more useful and more subtle invariant that we will explain here: it is a \emph{modular linear differential equation} (MLDE) cf. \cite{FM2}. For the case at hand this may be taken to be a linear differential equation of \emph{weight $k$} with modular coefficients of the form
\begin{equation}
  \label{MLDE}
(P_0D^n+P_1D^{n-1}+\cdots+P_{n-1}D+ P_n)u=0.
\end{equation}
Here, each $P_{\ell}\in \CC[E_4, E_6]$ is a holomorphic modular form of weight $k+2\ell-2n$ and $D$ is the \emph{modular derivative} defined on modular forms of weight $k$ by the formula $D_k = q\tfrac{d}{dq} - \tfrac{k}{12}E_2$. In this paper, since the character vector of a VOA is of weight zero, we require the case where $D = D_0$ and so
\[
 D^n = D_{2n-2} \circ \cdots \circ D_2 \circ D_0.
\]
Then one knows that \emph{there is an MLDE of some weight $k$ whose solution space is $\ch_V$}.

The MLDE \eqref{MLDE} may be taken as the desired invariant of $V$. Not only does it implicitly include $\ch_V$ as the space of solutions of \eqref{MLDE}, but in addition it carries a \emph{monodromy representation} defined by analytic continuation of the solutions around the singularities. Because of the special nature of the differential equation \eqref{MLDE} this monodromy is essentially the representation $\rho$ of $\Gamma$ acting on $\ch_V$.

The purpose of the present paper is to prove the analog of the Mathur-Mukhi-Sen Theorem \cite{MMS}, \cite{MNS} in rank $3$. The extra dimension gives rise to a great deal of additional complication and difficulties. Some of these were discussed in \cite{MasonFive} where our Main Theorem appeared as Problem 4. In particular, while it has long been recognized that VOAs have a strong arithmetic vein, the current proof of Main Theorem \ref{thmmain} includes an unprecedented amount of number theoretic complications.

We shall now state our main result precisely and outline its proof: we characterize strongly regular VOAs $V$ of rank $3$ whose associated MLDE \eqref{MLDE} has weight $0$ so that it takes the form
\[
(D^3+aE_4D+bE_6)u=0,\quad (a, b \in \CC).
\]
An MLDE of weight zero such as this is said to be \emph{monic}. Additionally, we assume that the monodromy $\rho$ is an \emph{irreducible} representation of $\Gamma$. With these conditions and definitions we establish the following main result:
\begin{thm}[Rank $3$ Mathur-Mukhi-Sen]
  \label{thmmain}
  Let $V$ be a strongly regular VOA with exactly three simple modules. Suppose that the $q$-characters of the simple $V$-modules furnish a fundamental system of solutions for an MLDE of order $3$ that is (i) monic, and (ii) has irreducible monodromy. Then one of the following holds:
\begin{enumerate}
\item[(a)]$V$ is isomorphic to one of the following:
\begin{eqnarray*}
&&B_{\ell,1} \quad\quad (\ell \geq 2),\\
&&A_{1,2},\\
&&\Vir(c_{3, 4}),~ \Vir(c_{2,7}).
\end{eqnarray*}
\item[(b)] $V$ lies in the $U$-series (cf. Remark \ref{r:useries}).
\end{enumerate}
(Here, and below, $\mathcal{G}_{\ell, k}$ denotes an affine algebra of type $\mathcal{G}$, rank $\ell$, and level $k$; $\Vir(c)$ is a Virasoro VOA of central charge $c$.)
\end{thm}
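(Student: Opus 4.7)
The plan is to convert the problem into a finite arithmetic check on a two-parameter family of hypergeometric-type character vectors, and then to match the surviving candidates to known VOAs.

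First, I would analyze the Frobenius exponents of the monic MLDE $(D^3 + aE_4 D + bE_6)u = 0$ at the cusp $q = 0$. These are the three rational numbers $m_i = h_i - c/24$ attached to the simple modules $M_i$. Since the MLDE has no $D^2$-term, the standard Wronskian computation forces $m_0 + m_1 + m_2 = \tfrac{1}{2}$. Irreducibility of $\rho$ forces the $m_i$ to be pairwise distinct modulo $\ZZ$, and $(a,b)$ can be recovered from the elementary symmetric functions of the $m_i$ (modulo corrections coming from the $E_2$-pieces inside the modular derivative). After this reduction the problem has essentially two free parameters, which I would take to be $m_1$ and $m_2$ with $m_0 = \tfrac{1}{2} - m_1 - m_2$.

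Next, using the Hauptmodul $K = 1728/j$, the MLDE converts to a generalized hypergeometric equation on $\PP^1 \setminus \{0,1,\infty\}$ whose solutions are pullbacks of ${}_3F_2$-series. Each $f_i(\tau)$ then takes the form $q^{m_i}$ times a power series whose coefficients are explicit rational functions of the exponents $m_j$; this is the family of VVMFs with rational Fourier coefficients advertised in the abstract. The denominators of these rational functions are products of linear forms in the $m_j$ dictated by the recursive structure of the hypergeometric series, and their zero loci record the resonances of the MLDE.

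The arithmetic heart of the argument, and the step I expect to be by far the hardest, is the integrality and positivity step: imposing that every Fourier coefficient of every $f_i(\tau)$ lie in $\ZZ_{\geq 0}$ (with leading coefficient $1$ for $f_0$, and in particular forcing $\dim V_1$ to be a non-negative integer) produces a cascading system of congruence, $p$-adic, and sign conditions on $(m_0, m_1, m_2)$. In the order-$2$ Mathur--Mukhi--Sen setting one can appeal to Schwarz's list of algebraic ${}_2F_1$'s to finish the classification, but no comparable closed classification is available for ${}_3F_2$. The argument must therefore proceed by a genuine case analysis, organized around the small primes dividing the hypergeometric denominators and sharpened by the rationality constraints on $c$ and the $h_i$ that follow from strong regularity. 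The output of this analysis should be a finite, explicit list of admissible exponent triples.

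Once this list is in hand, matching to VOAs is comparatively routine. The one-parameter family of triples is identified with the Kac--Peterson characters of $B_{\ell,1}$ for $\ell \geq 2$; the sporadic triples realize the known characters of $A_{1,2}$, $\Vir(c_{3,4})$, and $\Vir(c_{2,7})$; and the remaining admissible triples match the eleven character vectors comprising the $U$-series, yielding the dichotomy of the theorem.
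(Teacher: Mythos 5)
Your outline correctly identifies the overall architecture: reduce to a two-parameter family of exponents via the trace of the indicial equation, rewrite the MLDE as a ${}_3F_2$ equation in the hauptmodul, observe that the Fourier coefficients are rational functions of the exponents, and impose nonnegativity and integrality. This is indeed the paper's strategy at that level. But there are two serious gaps where the proposal waves at a step that is in fact where most of the work lies, and one of those steps is not merely ``hard'' but structurally indispensable.

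First, you never explain why the search space is finite, and the congruence/arithmetic conditions you describe are insufficient to make it so. The proof needs the result (Section~\ref{s:monodromy}) that the monodromy $\rho$ factors through a congruence subgroup and, concretely, that the denominators of $h_1,h_2$ divide $16$ (imprimitive case) or equal $5$ or $7$ (primitive case). That classification is obtained by a separate group-theoretic analysis of three-dimensional irreducible modular representations --- via induced characters from $\Gamma_0(2)$ in the imprimitive case and a quasi-simple quotient analysis ($L_2(5)$, $L_2(7)$, excluding $3.L_2(9)$) in the primitive case. Without this bound on denominators the ``cascading system of congruence, $p$-adic, and sign conditions'' cannot be turned into a finite search, and the $p$-adic carry/Bertrand-type estimates deployed in Section~\ref{s:fibers} and Appendix~\ref{a:bertrand} have no purchase. (You also claim the output is a ``finite, explicit list of admissible exponent triples,'' which contradicts your own observation two sentences later that one family is the infinite $B_{\ell,1}$ series; in the paper this infinite family is handled as the $y=-1/2$ fiber and requires its own argument.)

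Second, ``matching to VOAs is comparatively routine'' is false, and glossing over it hides a substantial part of the proof. Having nonnegative integral Fourier coefficients for $f_0$ (and nonnegative coefficients for $f_1, f_2$ up to unknown scalars $A_1,A_2$) produces a much longer list of candidates than the theorem's conclusion: see Figures~\ref{f:full57}--\ref{f:full2}. Eliminating the extras requires (i) the symmetry of the $S$-matrix and the Verlinde formula to pin down $A_1, A_2$ and detect failures of integrality in $f_1, f_2$, and (ii), for the infinite family with $y=-1/2$, a case-by-case Lie-theoretic analysis of the reductive Lie algebra $V_1$ (Propositions~\ref{propa}--\ref{propb}), which in turn relies on the new inequality $\tilde c < \ell+1 \Rightarrow c=\tilde c$ (Theorem~\ref{thmctildec}), the majorization theorem and the $B_\ell/C_\ell$ dimension comparison of Appendix~\ref{a:affinealgebras}, and the distinction between $\dim \ch_V = 3$ and $V$ having exactly three simple modules (e.g., the candidate $(m,c,h_1,h_2)=(24,4,2/5,3/5)$ has the character of $A_{4,1}$, which has more than three simples, so must be excluded). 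None of this is routine, and the proposal as written gives no mechanism for it.
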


\begin{rmk}
  \label{r:useries}
  The $U$-series\footnote{In an earlier preprint $U$ stood for `unknown' or 'undecided'. Although the question of existence is now decided in many cases -- subject to a standard conjecture -- it is still a useful mnemonic.} refers both to $11$ sets of datum indexed by an integer $k$ in the range $0\leq k\leq 10$, and to a family of VOAs uniformly described by the data, each of which satisfies the hypotheses of Theorem \ref{thmmain}. The data arises from the residual cases in our approach to the proof of Theorem \ref{thmmain}. 

  Two VOAs in the $U$-series are well-known. These are the affine algebra $E_{8,2}$ and the baby Monster VOA $\VB^{\natural}_{(0)}$ \cite{Hoehn1}. These two VOAs correspond to $k=8$ and $k=0$ respectively.

  We will show that $13$ additional VOAs in the $U$-series, corresponding to $k=1,2,\ldots, 6$ may also be constructed as commutants in a Schellekens list VOA. This is strongly suggested by, and depends on, the work of Gaberdiel-Hampapura-Mukhi \cite{GHM} and Lin \cite{Lin}. For further details we refer the reader to Section \ref{SU}.
\end{rmk}

\begin{rmk}
\label{r:ghm}
  Since the original submission of this paper we have been able to prove that Theorem \ref{thmmain} remains true without the irreducibility assumption (ii). Were we to include details, however, it would significantly add to the length of the present paper, so we skip them here.
\end{rmk}

The idea of classifying $2$-dimensional conformal field theories is an old dream of physicists, dating from the late 1980s, and the influential paper of Moore and Seiberg \cite{MooreSeiberg} is often cited in this regard. The idea of attacking the problem based on the method of MLDEs as we have explained it was propounded by Mathur, Mukhi and Sen \cite{MMS} in 1988, where they discussed the classification of rank $2$ VOAs at the level of physical rigor. Until recently mathematicians have hesitated to get on this bandwagon, perhaps because of the lack of a sufficiently solid theory of MLDEs and VVMFs, however that trend has now reversed itself. The rank $2$ theory of Mathur-Mukhi-Sen  was put on a solid mathematical foundation in \cite{MNS}, and Tener and O'Grady have extended this in developing the theory of rank $2$ \emph{extremal} VOAs \cite{TG}. 

As for the rank $3$ theory treated here, our Main Theorem \ref{thmmain} subsumes a number of results in both the mathematical and physical literature. Hampapura and Mukhi treated the Baby Monster VOA from the MLDE perspective in \cite{HM}. This example together with $E_{8,2}$ was considered by Gerald H\"{o}hn \cite{Hoehn1}. Gaberdiel, Hampapura and Mukhi also constructed several VOAs related to, and conjecturally equal to, some VOAs in the $U$-series in their work \cite{GHM}, and in Appendix C of \cite{MMS2} one finds a discussion of the infinite series of affine algebras intervening in Theorem \ref{thmmain}. Arike, Nagatomo, Kaneko and Sakai discussed the MLDEs satisfied by these and many other affine algebras in a very useful paper \cite{AKNS}. Arike, Nagatomo and Sakai characterized some low-dimensional Virasoro algebras according to their MLDEs \cite{ANS1}, and the results of both this and a preprint of  Mason, Nagatomo and Sakai \cite{MNS}  characterizing some VOAs with $c=8$ or $16$ are special cases of Theorem \ref{thmmain}.

Theorem \ref{thmmain} is proved by exploiting the fact, proved in \cite{FM2}, that a monic MLDE of degree three can be solved in terms of generalized hypergeometric series. This solution describes an algebraic family of modular forms that vary according to choices of local exponents at the cusp for the MLDE. The important point for our analysis is that this family of modular forms has Fourier coefficients that are \emph{rational functions} of the local exponents. Since our goal is to classify specializations of the family that have Fourier coefficients that are nonnegative integers, we proceed as follows:
\begin{enumerate}
\item It is known that the monodromy representation is congruence, and in Section \ref{s:monodromy} we give a direct proof of this fact (cf. Theorem \ref{t:repclass}). Indeed, together with the results of \cite{FM3}, our results establish the \emph{unbounded denominator conjecture} for $3$-dimensional irreducible representations of the modular group (whereas \cite{FM3} treated the case of imprimitive representations). The $2$-dimensional case was proved in \cite{FM1}. The main result of Section \ref{s:monodromy} details the $3$-dimensional irreducible representations of $\Gamma$ and makes precise some computations from \cite{BH}.
\item Next in Section \ref{s:positivity} we study the divisors of the first nontrivial Fourier coefficients of the character vector. The signs of the coefficients are constant on the connected components of the complement of the divisors, so that we may restrict our search to a reasonably small and manageable subset of all possible parameters. This is explained in Theorem \ref{t:positivity} and it is displayed graphically in Figure \ref{f:positiveregion} on page \pageref{f:positiveregion}.
\item  The remaining characters are tested for integrality in Section \ref{s:fibers}, where we use arithmetic properties of hypergeometric series discussed in \cite{FGM}. The output is one infinite family of possible character vectors, in addition to a finite list of additional exceptional possibilities tabulated in Figures \ref{f:full57} and \ref{f:full2} on pages \pageref{f:full57} and \pageref{f:full2}.
\item Next in Section \ref{s:sieve} we apply further tests arising from the theory of VOAs, namely symmetry of the $S$-matrix and the Verlinde formula \cite{Huang}, to whittle the remaining examples down to the statement of our Main Theorem \ref{thmmain}.
  \item In Section \ref{s:quadraticfamily} we complete the proof of Theorem \ref{thmmain} by discussing the infinite family of possible character vectors, and we explain how they are in fact realized by VOAs.
  \end{enumerate}
  Finally, Section \ref{SU} discusses the $U$-series.

 It is worth noting that a significant feature of our proof, indeed, of the general approach to VOA classification through VVMFs and MLDEs, is the difficulty in distinguishing VOAs that have \emph{more} than three simple modules but which satisfy $\dim \ch_V = 3$. A good part of our proof goes through under the weaker assumption that  $\dim \ch_V=3$. But in order to readily apply the symmetry of the $S$-matrix we must assume that $V$ has rank $3$. A similar circumstance already revealed itself in \cite{MNS} in the rank $2$ case.

It is well-known that the VOAs listed in Theorem \ref{thmmain} are strongly regular, have exactly three simple modules, and satisfy the other conditions of Theorem \ref{thmmain}. For the case of the affine algebras this is easily deduced from \cite{AKNS} and for the Virasoro algebras, see e.g., \cite{LL}. In Table \ref{Table1} on page \pageref{Table1} we have collected some relevant data for these VOAs.
 
  \emph{Acknowledgements}. We are indebted to the following individuals for helpful discussions, supplying references, and for answering our questions: Chongying Dong, Gerald H\"{o}hn, Ching Hung Lam, Sunil Mukhi, Kiyokazu Nagatomo, and Ivan Penkov. We also thank the referee for their detailed comments.



\begin{table}
\renewcommand{\arraystretch}{1.4}
\begin{tabular}{|c|c|c|}   \hline
VOA & $c$& $h_1, h_2$\\ \hline
$A_{1, 2}$&$\tfrac{3}{2}$& $\tfrac{3}{16}$, $\tfrac{1}{2}$\\
$B_{\ell,1},~ \ell \geq 2$&$\tfrac{2\ell+1}{2}$& $\tfrac{2\ell+1}{16}$, $\tfrac{1}{2}$\\
$E_{8,2}$&$\tfrac{31}{2}$& $\tfrac{3}{2}$, $\tfrac{15}{16}$\\
$\Vir(c_{2, 7})$& $-\tfrac{68}{7}$ & $-\tfrac{2}{7}$, $-\tfrac{3}{7}$\\
$\Vir(c_{3, 4})$&$\tfrac{1}{2}$& $\tfrac{1}{16}$, $\tfrac{1}{2}$\\ \hline
\end{tabular}
\caption{Some VOAs with three simple modules}\label{Table1}
\end{table}

\section{Background on VOAs}
\label{s:voas}
\subsection{The invariants $c$, $\tilde{c}$, $\ell$}
\label{SScl}
In this Subsection we discuss the numerical  invariants $c$, $\tilde{c}$ and $\ell$  associated with a strongly regular VOA $V{=}(V, Y, \mathbf{1}, \omega)$ that we will use in the following Sections. For additional background and discussion we refer the reader to \cite{MasonLattice}. We note that one of our results, Theorem \ref{thmctildec}, is new and improves upon an inequality of Dong and Mason \cite{DMRational}. In this Subsection we do \emph{not} make any assumptions about the number of irreducible modules that $V$ may have, merely that they are finite in number.

The invariant $c$, the \emph{central charge} of $V$, is of course well-known and a standard invariant that is part of the definition of $V$. We sometimes write $c=c_V$. Because $V$ is strongly regular then it has only finitely many (isomorphism classes of) irreducible modules, which we label as $M_0$, $M_1$, $\hdots$, $M_{n-1}$. And because $V$ is necessarily simple then one of the $M_i$ is isomorphic to $V$, and we will always choose notation so that $V=M_0$. Each $M_i$ has a \emph{conformal weight} $h_i$ defined to be the least nonvanishing eigenvalue of the $L(0)$-operator. Thus $M_i$ has (conformal) grading $M_i=\oplus_{n\geq 0}M_{n+h_i}$, and the \emph{$q$-character} of $M_i$ is defined by
\begin{equation}
\label{qchardef}
\qchar M_i\df \Tr_{M_i}q^{L(0)-c/24}=q^{h_i-c/24}\sum_{n\geq 0} \dim (M_i)_{n+h_i}q^n.
\end{equation}
Throughout this paper we use the notation
\[
m\df \dim V_1.
\]
In particular, and as part of the definition of a strongly regular VOA, we have
\[
\mbox{q-char}\ V\df \Tr_{V}q^{L(0)-c/24}=q^{-c/24}\sum_{n\geq0} \dim V_{n}q^n=q^{-c/24}(1+mq+\hdots)
\]
We note that $c$ and each $h_i$ lies in $\QQ$, the field of rational numbers \cite{DLMModular}.

The \emph{effective central charge} $\tilde{c}=\tilde{c}_V$ is defined as
\begin{equation*}
\tilde{c}\df c-24h_{\textrm{min}}
\end{equation*}
where $h_{\textrm{min}}$ is the \emph{least} of the rational numbers $h_i$. Note that $h_0=0$ by our convention, in particular we always have $c \leq \tilde{c}$, and of course $\tilde{c} \in \QQ$. The effective central charge will play an important r\^{o}le in our efforts to characterize certain VOAs. Its relevance is partially explained by noticing that among the set of $q$-characters \eqref{qchardef}, the \emph{least} of the leading $q$-powers is precisely $q^{-\tilde{c}/24}$.

The invariant $\ell$ is defined to be the \emph{Lie rank} of $V_1$. It is well-known that the  homogeneous space $V_1$ of a strongly regular VOA carries the structure of a Lie algebra with respect to the bracket $[ab] \df a(0)b$. Indeed, $V_1$ is a \emph{reductive Lie algebra} \cite{DMRational}. Then $\ell$ is the dimension of a Cartan subalgebra of $V_1$. The following equality involving $\ell$ and $\tilde{c}$ is known (loc. cit.)
\begin{eqnarray*}
\tilde{c} \geq \ell,\quad \textrm{and} \quad \tilde{c}=0\quad \textrm{only if}\quad V=\CC.
\end{eqnarray*}
In particular, if $V\neq\CC$ then at least one of the $q$-characters \eqref{qchardef} has a \emph{pole} at $q=0$.

In \cite{DMRational} it was shown that the simultaneous equalities $c=\tilde{c}=\ell$ characterize lattice VOAs $V_{\Lambda}$ (some positive-definite even lattice $\Lambda$), and the authors expected that the equality $\tilde{c}=\ell$ should suffice to characterize this class of VOAs. Here, we shall prove this and more.
\begin{thm}
  \label{thmctildec}
  Suppose that $V$ is a strongly regular VOA satisfying $\tilde{c}<\ell+1$. Then $c=\tilde{c}$. In particular, if $\tilde{c}=\ell$ then $V$ is isomorphic to a lattice theory $V_{\Lambda}$ for some even lattice $\Lambda$.
\end{thm}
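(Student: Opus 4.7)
The plan is to analyze the commutant of the Heisenberg subVOA generated by a Cartan subalgebra of $V_1$ and to invoke the classification of strongly regular VOAs of small central charge. The key idea is that the hypothesis forces this commutant to have central charge less than one, after which a case analysis on its structure should yield the result.

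First I would fix a Cartan subalgebra $\frakh \subset V_1$ of dimension $\ell$ and let $H \subset V$ be the Heisenberg subVOA it generates, which has rank and central charge equal to $\ell$. Its commutant $U = C_V(H)$ is a strongly regular subVOA with conformal vector $\omega_U = \omega_V - \omega_H$ and central charge $c_U = c_V - \ell$. Since $c_V \leq \tilde c_V < \ell+1$, one obtains $c_U < 1$. By the classification of strongly regular VOAs of central charge less than one, $U$ must be either isomorphic to $\CC$ or to a Virasoro minimal model of central charge $c_{p,q}$ for coprime integers $p, q \geq 2$.

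When $U = \CC$, the decomposition of $V$ as an $H$-module writes $V = \bigoplus_{\lambda \in \Lambda} F_\lambda$ for some positive-definite even lattice $\Lambda$ of rank $\ell$, identifying $V$ with the lattice VOA $V_\Lambda$ and yielding $c = \tilde c = \ell$. The ``in particular'' clause of the statement then follows immediately: when $\tilde c = \ell$, the main conclusion gives $c = \tilde c = \ell$, and the Dong--Mason theorem from \cite{DMRational} identifies $V$ with some $V_\Lambda$.

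The main obstacle is excluding the case where $U$ is a non-trivial Virasoro minimal model. Here one would decompose $V = \bigoplus_\lambda F_\lambda \otimes W_\lambda$ as an $H \otimes U$-module and use the hypothesis that $V$ be simple and self-contragredient in order to constrain the $W_\lambda$ to play the role of simple currents for $U$. The hope is that combining this with the very limited simple-current structure of minimal-model fusion rings at $c < 1$, together with the integrality and modularity of the character vector of $V$ supplied by Huang's theorem, is enough to rule out every non-trivial $(p,q)$ and force $U = \CC$.
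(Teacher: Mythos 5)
Your plan is aimed at the wrong target and cannot succeed as described. You set out to show that the Heisenberg commutant $U = C_V(H)$ is trivial, which would force $V \cong V_\Lambda$ to be a lattice theory. But the theorem only claims $c = \tilde c$ under the hypothesis $\tilde c < \ell + 1$; the lattice conclusion is reserved for the stronger hypothesis $\tilde c = \ell$. The affine VOA $B_{\ell,1}$ has $\tilde c = c = \ell + \tfrac{1}{2} < \ell + 1$ and Lie rank $\ell$, yet it is not a lattice theory: its Heisenberg coset is the Ising model $\Vir(c_{3,4}) \neq \CC$. So the ``main obstacle'' you identify — excluding a nontrivial Virasoro minimal model commutant — is not merely a difficulty to be overcome; it is a counterexample to the route you have chosen, and no amount of simple-current or modularity bookkeeping will repair it. The paper handles nontrivial Virasoro factors rather than excluding them: it invokes Theorem 1 of \cite{MasonLattice}, which produces a \emph{conformal} subalgebra $T = W \otimes C \subseteq V$ with $W \cong V_\Lambda$ of rank $\ell$ and $C$ a discrete-series Virasoro, and the crux is that in this setup $C$ lies in the \emph{unitary} discrete series, so $\tilde c_C = c_C$. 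The chain
\[
c_V \leq \tilde c_V \leq \tilde c_T = \tilde c_W + \tilde c_C = c_W + c_C = c_T = c_V
\]
then closes to give $c_V = \tilde c_V$ without ever needing $C = \CC$; only afterward is $\tilde c = \ell$ fed into the Dong--Mason lattice characterization.

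Beyond the misdirected goal, two intermediate assertions in your sketch are unsupported. Strong regularity of the Heisenberg (or lattice) commutant is not a theorem; rationality and $C_2$-cofiniteness of cosets is precisely what this paper itself treats only as Hypothesis C in Section \ref{SU}, so it cannot be taken for free. And there is no classification of strongly regular VOAs of central charge less than one at the generality you require — in particular the non-unitary cases your argument would have to confront are open. What is available is the structure theorem of \cite{MasonLattice}, which supplies the discrete-series Virasoro as a tensor factor of a conformal subalgebra, and that is the engine of the paper's actual proof.
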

\begin{proof}
  We shall do this by modifying the proof of Theorem 7 of \cite{MasonLattice}. Theorem 1 of \cite{MasonLattice} says that $V$ contains a subVOA $T \subseteq V$ with the following  properties:
\begin{enumerate}
  \item[(a)] $T$ is a \emph{conformal subalgebra} of $V$, i.e., $V$ and $T$ have the \emph{same} Virasoro element, and in particular $c_V =c_T$;
  \item[(b)] $T$ is a tensor product $T \cong W \otimes C$ of a pair of subVOAs $W$ isomorphic to a lattice theory $V_\Lambda$ of rank $\ell$, and $C$ isomorphic to a discrete series Virasoro VOA $\Vir(c_{p,q})$.
 \end{enumerate}
    Actually, in this set-up we have $0\leq c_{p,q}<1$, so that $C$ is in the \emph{unitary} discrete series. We have the following series of inequalities that proves what is needed:
 \begin{eqnarray*}
 c_V \leq \tilde{c}_V \leq \tilde{c}_T = \tilde{c}_{V_{\Lambda}}\tilde{c}_{\Vir(c_{p,q})} = c_{V_{\Lambda}}c_{\Vir(c_{p,q})} = c_T = c_V.
\end{eqnarray*}
Here, the first inequality was pointed out before; the second inequality holds because $T$ is a conformal subalgebra of $V$; the first equality holds because effective central charge is multiplicative over tenor products; the second equality holds because central charge and effective central charge \emph{coincide} for both lattice theories and unitary discrete series of Virasoro VOAs; and finally the third equality holds because central charge is also multiplicative over tensor products.
\end{proof}

As a corollary of this proof, we have:
\begin{cor}
  \label{corc/2}
  Suppose $2c \in \ZZ$. Then one of the following holds:
  \begin{enumerate}
\item[(a)] $\tilde{c}-\ell \geq 1$;
\item[(b)] $\tilde{c}-\ell =\tfrac{1}{2}$ and $\tilde{c}=c$;
\item[(c)] $\tilde{c}=\ell =c$.
\end{enumerate}
\end{cor}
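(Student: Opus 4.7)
The plan is to leverage Theorem \ref{thmctildec} to reduce the trichotomy to an elementary divisibility observation. Recall that $\tilde{c} \geq \ell$ always holds, so $\tilde{c} - \ell$ is a nonnegative (rational) number. The only task is to show that if $2c \in \ZZ$, the value $\tilde{c} - \ell$ cannot lie strictly between the exhibited thresholds $0$, $\tfrac{1}{2}$, and $1$.

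First I would split on whether $\tilde{c} - \ell \geq 1$. If so, we land directly in case (a), and there is nothing more to prove. Otherwise $\tilde{c} < \ell + 1$, and Theorem \ref{thmctildec} applies to give the collapse $c = \tilde{c}$. Combined with the hypothesis $2c \in \ZZ$, this yields $2\tilde{c} \in \ZZ$.

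Next I would use the fact that $\ell$ is a nonnegative integer, being the dimension of a Cartan subalgebra of the reductive Lie algebra $V_1$. Hence $2(\tilde{c} - \ell) \in \ZZ$. Coupled with the bounds $0 \leq \tilde{c} - \ell < 1$, this forces $2(\tilde{c} - \ell) \in \{0, 1\}$, i.e.\ $\tilde{c} - \ell \in \{0, \tfrac{1}{2}\}$. These two options give cases (c) and (b) respectively, where in both cases we already have the equality $\tilde{c} = c$ from the application of Theorem \ref{thmctildec}. (In case (c), the lattice VOA conclusion of Theorem \ref{thmctildec} is an automatic bonus, consistent with $c = \tilde{c} = \ell$ for lattice theories.)

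There is no genuine obstacle here; the corollary is essentially a packaging of Theorem \ref{thmctildec} with the observation that $\ell \in \ZZ$ and, under $2c \in \ZZ$, the quantity $2(\tilde{c} - \ell)$ becomes a nonnegative integer. The only minor point to be careful about is the implication chain: one must invoke Theorem \ref{thmctildec} \emph{before} concluding that $\tilde{c}$ is a half-integer, since a priori only $c$ is constrained by the hypothesis, while $\tilde{c} = c - 24 h_{\min}$ involves the a priori merely rational number $h_{\min}$.
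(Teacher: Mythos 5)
Your argument is correct and genuinely more elementary than the paper's. You split on $\tilde c - \ell \geq 1$, apply Theorem \ref{thmctildec} to get $c = \tilde c$ when that fails, then use $\ell \in \ZZ$ and $2c \in \ZZ$ to conclude $2(\tilde c - \ell) \in \{0,1\}$ from the a priori bound $0 \leq \tilde c - \ell < 1$. The paper instead reaches back into the \emph{proof} of Theorem \ref{thmctildec}: it uses the conformal subVOA $V_\Lambda \otimes \Vir(c_{p,q})$ with $\Lambda$ of rank $\ell$ and $\Vir(c_{p,q})$ in the unitary discrete series, writes $c = \ell + 1 - \tfrac{6}{z(z+1)}$ for some integer $z \geq 2$, and then observes that $2c \in \ZZ$ forces $z \in \{2,3\}$, corresponding to Virasoro factors $\CC$ and $\Vir(c_{3,4})$ and hence to cases (c) and (b). What your route buys is logical economy: you invoke only the \emph{statement} of Theorem \ref{thmctildec}, not its proof, and the discrete-series formula disappears entirely. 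What the paper's route buys is extra structural information -- it identifies the specific Virasoro tensor factor in each case -- which is not needed for the corollary as stated but is in the spirit of the surrounding analysis. You were also right to flag the subtle ordering point: one must first deduce $c = \tilde c$ before treating $\tilde c$ as a half-integer, since $\tilde c = c - 24 h_{\min}$ is a priori only rational.
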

\begin{proof} If (a) is false then $\tilde{c}-\ell{<}1$ and Theorem \ref{thmctildec} tells us that $\tilde{c}=c$. Moreover, as the proof shows, $V$ contains a conformal subVOA isomorphic to $V_{\Lambda} \otimes \Vir(c_{p,q})$ where $\Lambda$ is an even lattice of rank $\ell$. The Virasoro tensor factor lies in the unitary discrete series because its central charge is less than $1$.\ It follows that there is an integer $z \geq 2$ such that 
\[
c=\ell{+}1{-}\tfrac{6}{z(z+1)}.
\]
Because $2c \in\ZZ$, this can only happen if $z = 2$ or $3$. These two possibilities correspond to (c) and (b) respectively. This completes the proof.
\end{proof}

\subsection{The space $\ch_V$ of $q$-characters }
\label{SSchV}
We retain the notation of the previous Subsection and in particular $V$ denotes a strongly regular VOA. For the rest of this Subsection we assume that $\dim \ch_V=3$ and that $\ch_V$ is the solution space of a \emph{monic} MLDE that has an \emph{irreducible} monodromy representation $\rho:\SL_2(\ZZ) \rightarrow \GL(\ch_V)$ cf. \cite{FM2,FM3}. In particular, the MLDE in question must look like
\begin{equation}
  \label{MLDE1}
(D_0^3+aE_4 D_0+b E_6)f=0,\quad a, b \in\QQ.
\end{equation}
Here $E_4$ and $E_6$ are the holomorphic Eisenstein series of level one and weights $4$ and $6$ respectively, normalized so that the constant terms are $1$. In \cite{FM2,FM3} this MLDE arose as the differential equation satisfied by forms of minimal weight for $\rho$. It is worth noting that the form of minimal weight for a given representation (and choice of exponents for $\rho(T)$) is rarely $0$, so that the modular forms arising as character vectors of VOAs are almost never of minimal weight. Nevertheless, the computations of \cite{FM2,FM3} may be used to study the solutions of equation \eqref{MLDE1}, and we discuss this next.

Because $\rho$ is irreducible it is easy to see, and it is a special case of a result of Tuba-Wenzl \cite{TW}, that the $T$-matrix $\rho(T)$ has \emph{distinct eigenvalues}. A general result \cite{DLMModular} says that $\rho(T)$ has \emph{finite order} (although in the present context this can be seen more directly), and in any case there are \emph{distinct} $r_0, r_1, r_2 \in \QQ \cap[0,1)$ and a basis of $\ch_V$ such that if we assume that $\rho$ is written with respect to this choice of basis then
\begin{equation}
  \label{Texps}
\rho(T)= \left(\begin{matrix}e^{2\pi ir_0} & 0 & 0 \\0 & e^{2\pi ir_1} & 0 \\0 & 0 & e^{2\pi ir_2}\end{matrix}\right)
\end{equation}

Because $\ch_V$ \emph{spans} the solution space of the MLDE \eqref{MLDE1} then it is easy to see that the three eigenfunctions for $\rho(T)$ may be taken to be the $q$-characters of three irreducible $V$-modules, and that moreover we may take the first of these $V$-modules to be $V=M_0$. Let $M_1, M_2$ be the other two irreducible $V$-modules. The \emph{character} vector of $V$ is thus the vector-valued modular form
\[
F(\tau)\df\left(\begin{matrix}f_0(\tau) \\ f_1(\tau) \\ f_2(\tau)\end{matrix}\right)
\]
where
\begin{eqnarray*}
f_0(\tau)&\df&\Tr_{V}q^{L(0)-c/24}= q^{-c/24}+O(q^{1-c/24}),\\
f_i(\tau)&\df&\Tr_{M_i}q^{L(0)-c/24}= \dim (M_i)_{h_i}q^{h_i-c/24}+O(q^{1+h_i-c/24}), \quad i=1, 2,
\end{eqnarray*}
and furthermore
\begin{eqnarray*}
r_0 &\equiv& -\tfrac{c}{24} \pmod{\ZZ},\\
r_i&\equiv& h_i-\tfrac{c}{24}\pmod{\ZZ},\quad i=1, 2.
\end{eqnarray*}

There is an important identity that accrues from the special shape of the MLDE \eqref{MLDE1}, namely:
\begin{lem}
  \label{lemc}
  The following hold:
\begin{enumerate}
\item[(a)] $c=8(h_1+h_2-\tfrac{1}{2})$,
\item[(b)] $\det\rho(T)=-1$.
\end{enumerate}
\end{lem}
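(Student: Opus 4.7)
The plan is to deduce both claims from the indicial equation of the monic MLDE \eqref{MLDE1} at the cusp $q=0$: its three roots are precisely the leading $q$-exponents of the character vector, and Vieta's formula pins down their sum.

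To set up the indicial polynomial, I would use that $D_k q^r = (r - k/12)\,q^r + O(q^{r+1})$, because $E_2 = 1 + O(q)$. Iterating through the weights $0, 2, 4$ in the composition $D^3 = D_4 \circ D_2 \circ D_0$ yields
\[
D^3 q^r = r\bigl(r - \tfrac{1}{6}\bigr)\bigl(r - \tfrac{1}{3}\bigr)\,q^r + O(q^{r+1}).
\]
Substituting $q^r$ into \eqref{MLDE1} and using $E_4(0) = E_6(0) = 1$, the indicial polynomial at $q = 0$ is
\[
\chi(r) = r\bigl(r - \tfrac{1}{6}\bigr)\bigl(r - \tfrac{1}{3}\bigr) + ar + b,
\]
whose three roots sum to $\tfrac{1}{6} + \tfrac{1}{3} = \tfrac{1}{2}$ by Vieta's formulas. (The absence of a $D^2$ term in the monic MLDE is exactly what kills the $r^2$-coefficient in $\chi$.)

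Next I would identify the three roots of $\chi$ with $-c/24,\ h_1 - c/24,\ h_2 - c/24$. These are the leading $q$-exponents of the basis solutions $f_0, f_1, f_2$ of \eqref{MLDE1}, and by Frobenius theory every solution's leading exponent must be a root of $\chi$. Because the diagonal entries $r_0, r_1, r_2$ of $\rho(T)$ in \eqref{Texps} are distinct modulo $\ZZ$, the three leading exponents are distinct as complex numbers and hence exhaust the roots. Summing and equating with $\tfrac{1}{2}$ gives
\[
h_1 + h_2 - \frac{c}{8} = \frac{1}{2},
\]
which is part (a). For part (b), $\rho(T)$ acts diagonally on $(f_0,f_1,f_2)$ with eigenvalues $e^{2\pi i (h_i - c/24)}$ (taking $h_0 = 0$), so
\[
\det\rho(T) = \exp\!\Bigl(2\pi i \sum_i (h_i - c/24)\Bigr) = e^{2\pi i \cdot \tfrac{1}{2}} = -1.
\]

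There is no serious obstacle in this argument; the only point to handle carefully is the bookkeeping of weights in $D^3 = D_4 \circ D_2 \circ D_0$ when computing $\chi(r)$, together with the (mild) Frobenius-theoretic remark that distinctness of the $r_i$ mod $\ZZ$ lets one match the roots of the indicial polynomial with the leading exponents of the $f_i$ rather than with some shifts of them. The content of the lemma is thus entirely captured by the observation that \emph{the monic shape of \eqref{MLDE1} forces the trace of local exponents at infinity to equal $\tfrac12$.}
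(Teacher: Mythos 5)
Your proof is correct and follows the same route the paper takes: extract the indicial polynomial at $q=0$ from the monic MLDE, observe that the absence of a $D^2$ term forces the three roots to sum to $\tfrac12$, identify those roots with $-c/24$, $h_1-c/24$, $h_2-c/24$, and read off both claims. The only difference is that you spell out the computation of $D^3 q^r$ and the Frobenius-theoretic matching of roots with exponents, which the paper compresses into "readily found."
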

\begin{proof}
  (a) The \emph{indicial equation} (at $\infty$) for \eqref{MLDE1} is readily found to be 
\[
x^3-\tfrac{1}{2}x^2+(a+\tfrac{1}{18})x+b=0,
\]
and in particular the corresponding indicial roots sum to $\tfrac{1}{2}$. However these roots are the leading exponents of $q$ for the functions  $f_i(\tau)\ (i{=}0, 1, 2)$, namely $-\tfrac{c}{24}$, $h_1-\tfrac{c}{24}$ and $h_2-\tfrac{c}{24}$. Part (a) of the Lemma follows immediately.

As for (b), using (a) we have $\det\rho(T)=e^{2\pi i(r_0+r_1+r_2)}=e^{2\pi i(h_1+h_2-c/8)}=-1$.
\end{proof}

\subsection{Things  hypergeometric}
\label{SShypergeo}
It is fundamental for this paper that with a suitable change of variables the MLDE \eqref{MLDE1} becomes a generalized hypergeometric differential equation that is solved by generalized hypergeometric functions ${_3}F_2$. This circumstance is explained in \cite{FM2,FM3} where, in particular, motivation for using the level $1$ hauptmodul $K:\uhp\cup \{\infty\} \longrightarrow \PP^1(\CC)$ defined by
\[
K=\frac{E_4^3}{E_4^3-E_6^2} = \frac{1728}{j}=1728q+\cdots
\]
is provided. The well-known paper of Beukers and Heckman \cite{BH}, which describes the monodromy of all generalized hypergeometric differential equations of all orders, may also be referenced here. We shall only need the case of order $3$. In terms of the differential operator $\theta_K \df K\tfrac{d}{dK}$, the MLDE \eqref{MLDE1} becomes (cf. \cite{FM2}, Example 15)
\begin{eqnarray*}
\left(\theta_K^3 -\tfrac{2K+1}{2(1-K)}\theta_K^2+\tfrac{18a+1-4K}{18(1-K)}\theta_K+\tfrac{b}{1-K}\right)f=0.
\end{eqnarray*}
Following \cite{BH}, Section 2, upon multiplying the previous differential operator by $1-K$ we obtain the following alternate formulation:
\[
\left\{(\theta_K+\beta_1-1)(\theta_K+\beta_2-1)(\theta_K+\beta_3-1)-K(\theta_K+\alpha_1)(\theta_K+\alpha_2)(\theta_K+\alpha_3)\right\}f=0
\]
for scalars $\alpha_1, \hdots, \beta_3$ satisfying
\begin{eqnarray}
  \label{localindices1}
&&\alpha_1+\alpha_2+\alpha_3=1,\notag\\
&&\alpha_1\alpha_2+\alpha_1\alpha_3+\alpha_2\alpha_3=\tfrac{2}{9}, \notag \\
&&\alpha_1\alpha_2\alpha_3=0,\\
&&\beta_1+\beta_2+\beta_3 -3=-\tfrac{1}{2},\notag\\
&&(\beta_1-1)(\beta_2-1)+(\beta_1-1)(\beta_3-1)+(\beta_2-1)(\beta_3-1)=\tfrac{1}{18}+a,\notag\\
&&(\beta_1-1)(\beta_2-1)(\beta_3-1)=b.\notag
\end{eqnarray}
The local indices at the three singularities $K=0, 1, \infty$ are as follows
\begin{eqnarray}
  \label{localindices2}
&1-\beta_1,~ 1-\beta_2,~ 1-\beta_3 &\textrm{at } K=0,\notag\\
&\alpha_1=0,~ \alpha_2=\tfrac{1}{3},~ \alpha_3=\tfrac{2}{3}&\textrm{at } K=\infty,\\
&0,~ 1,~ \tfrac 12& \textrm{at } K=1.\notag
\end{eqnarray}

Inasmuch as $K(\infty)=0$, $K(e^{2\pi i/3})=\infty$ and  $K(i)=1$, these sets of indices correspond to the local monodromies $\rho(T)$, $\rho(R)$, $\rho(S)$ respectively (where $R = -ST$ -- see Section \ref{ss:generalities} below for the notation). For example, we see that 
\begin{align*}
\det\rho(T)&=-1, & \det\rho(R)&=1, & \det\rho(S)&=-1.
\end{align*}

The generalized hypergeometric function ${_3}F_2$ is defined by
\begin{eqnarray*}
{_3}F_2(a_1, a_2, a_3; b_1, b_2;z)\df 1+\sum_{n\geq 1} \frac{(a_1)_n(a_2)_n(a_3)_n}{(b_1)_n(b_2)_n} \frac{z^n}{n!},
\end{eqnarray*}
where $(t)_n\df t(t+1)\hdots(t+n-1)$ is the rising factorial. Here, $a_1, a_2, a_3, b_1, b_2$ are arbitrary scalars subject to the exclusion that $b_1, b_2$ are \emph{not} nonpositive integers. With this convention, ${_3}F_2$ converges for $\abs{z}<1$, has singularities at $z=0, 1, \infty$, and is defined by analytic continuation elsewhere.

With the assumption that \emph{no two of the  $\beta_i$ differ by an integer}, a  fundamental system of solutions near $K=0$ of our hypergeometric differential equation is given as in equation (2.9) of \cite{BH} by
\begin{eqnarray}
  \label{Hypergeosolns}
&&K^{1-\beta_1}{_3}F_2(1+\alpha_1-\beta_1, 1+\alpha_3-\beta_1, 1+\alpha_1-\beta_1;1+\beta_2-\beta_1, 1+\beta_3-\beta_1;K)\notag\\
&&K^{1-\beta_2}{_3}F_2(1+\alpha_1-\beta_2, 1+\alpha_3-\beta_2, 1+\alpha_1-\beta_2;1+\beta_1-\beta_2, 1+\beta_3-\beta_2;K)\\
&&K^{1-\beta_3}{_3}F_2(1+\alpha_1-\beta_3, 1+\alpha_3-\beta_3, 1+\alpha_1-\beta_3;1+\beta_1-\beta_3, 1+\beta_2-\beta_3;K)\notag
\end{eqnarray}
In this way one obtains explicit and useful formulas for the character vector $F(\tau)$ of Subsection \ref{SSchV}. We shall exploit this hypergeometric formula, which describes a family of vector-valued modular forms varying over a space of indices for the differential equation \eqref{MLDE1}, to classify possible character vectors of VOAs having exactly $3$ irreducible modules and irreducible monic monodromy. The key points are that the Fourier coefficients of this family are rational functions in the local indices, and that the arithmetic behaviour of these coefficients are very well-studied, cf. \cite{Dwork}, \cite{FGM}.


\section{Classification of the monodromy}
\label{s:monodromy}
The purpose of this Section is to enumerate the possible monodromies $\rho$ of the MLDE attached to $\ch_V$ (cf. Subsection \ref{SSchV}). Essentially, this amounts to cataloguing certain equivalence classes of $3$-dimensional irreducible representations of $\SL_2(\ZZ)$. We shall do this, and in particular we will calculate the possible sets of exponents $r_i$ of the $T$-matrix \eqref{Texps}. These rational numbers (and in particular their \emph{denominators}) will play an important r\^{o}le in the arithmetic analysis in later Sections.

In \cite{BH} Beukers and Heckman described the monodromy of all hypergeometric functions ${}_nF_{n-1}$, so in principle they already solved the problem that concerns us in this Section because, as we have explained, our MLDE is hypergeometric. However there are several reasons why we prefer to develop our results from first principles. Firstly, the results of Beukers and Heckman are couched indirectly in terms of what they refer to as \emph{scalar shifts}, making their general answer that applies to all ranks too imprecise for our specific purpose. Secondly, they work with representations of the free group of rank $2$ whereas our monodromy groups factor through the modular group $\SL_2(\ZZ)$. So the question of the \emph{modularity} of $\rho$ does not arise in \cite{BH}. Finally, we anticipate that the details of our explicit enumeration will be useful in further work involving MLDEs of order $3$.
 
 Some of the main arithmetic results are summarized in the following:
 \begin{thm}
   \label{t:repclass}
   Let $V$ be a strongly regular VOA $V$ and suppose that the third order MLDE \eqref{MLDE1} associated with $\ch_V$ is monic with \emph{irreducible} monodromy representation $\rho$. Then $\rho$ is a \emph{congruence representation}, and one of the following holds:
   \begin{enumerate}
   \item $\rho$ is \emph{imprimitive} and both $h_1$ and $h_2$ are rational with denominators dividing $16$. Moreover, either
     \begin{enumerate}
     \item[(a)] one of $h_1$ or $h_2$ lies in  $\tfrac{1}{2}\ZZ$ or
     \item[(b)] the denominators of $h_1$ and $h_2$  are equal to each other
     \end{enumerate}
   \item $\rho$ is \emph{primitive} and the denominators of $h_1$ and $h_2$ are both equal to each other and to one of $5$ or $7$.
   \end{enumerate}
  \end{thm}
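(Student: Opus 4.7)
My plan is to exploit the hypergeometric realization developed in Section \ref{SShypergeo}: after the substitution $K = 1728/j$, the MLDE \eqref{MLDE1} becomes a rigid generalized hypergeometric equation whose local exponents at $K = 0, 1, \infty$ are recorded in \eqref{localindices2}. The local monodromies $\rho(T)$, $\rho(S)$, $\rho(R) = \rho(-ST)$ around $K = 0, 1, \infty$ thus play the roles of the three standard hypergeometric generators, so I can invoke Beukers-Heckman \cite{BH}. Their classification shows that an irreducible $3$-dimensional hypergeometric monodromy is either \emph{imprimitive} (induced from a character of an index-$3$ subgroup) or \emph{primitive}, and in the latter situation the image is \emph{finite}. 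I will treat the two cases separately, determining in each the eigenvalues of $\rho(T)$ and then verifying congruence.

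In the imprimitive case I would write $\rho \cong \Ind_H^\Gamma \chi$ for an index-$3$ subgroup $H \le \Gamma$ and a character $\chi$. Up to conjugacy in $\Gamma$ there is essentially one such class of $H$---the free-product structure $\PSL_2(\ZZ) \cong \ZZ/2 * \ZZ/3$ admits a unique transitive action on three points, representable by $\Gamma_0(2)$. The eigenvalues of $\rho(T)$ then split into two subcases according to how $T$ permutes the cosets $\Gamma/H$: if $T$ fixes a coset, one eigenvalue is $\chi(T)$ and the other two come from $\chi$ on an index-$2$ subgroup; if $T$ acts as a single $3$-cycle, the three eigenvalues are the cube roots of $\chi(T^3)$. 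Combining this with $\det \rho(T) = -1$ from Lemma \ref{lemc} and the finite order of $\rho(T)$ (cf.\ \cite{DLMModular}), a short case analysis pins down the $r_j$: in the fixed-coset case one eigenvalue is forced to be $\pm 1$ or $\pm i$, whence one of $h_1, h_2$ lies in $\tfrac{1}{2}\ZZ$; in the cyclic case the remaining two eigenvalues share a common order, forcing the denominators of $h_1, h_2$ to agree. The denominator bound of $16$ emerges from the $2$-power cyclic quotients of $\Gamma_0(2)^{\mathrm{ab}}$ compatible with the determinant constraint, and congruence then follows because any such character of $\Gamma_0(2)$ factors through a principal congruence subgroup.

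In the primitive case Beukers-Heckman guarantees that $\rho(\Gamma)$ is finite, and the rigid local data severely restrict this group: the exponents $(0, \tfrac{1}{3}, \tfrac{2}{3})$ at $K = \infty$ force $\rho(R)$ to have order $3$ with determinant $1$; the exponents $(0, 1, \tfrac{1}{2})$ at $K = 1$ make $\rho(S)$ a complex pseudoreflection with eigenvalues $(1, 1, -1)$. I will then intersect the Blichfeldt-Miller-Dickson classification of finite primitive subgroups of $\GL_3(\CC)$ with these constraints and the relation $S^2 = (ST)^3$ in $\Gamma$. The only primitive finite subgroups meeting all of these requirements are central extensions of $\PSL_2(\FF_7)$ and $\PSL_2(\FF_5) \cong A_5$, giving representations at levels $7$ and $5$ respectively; in both cases the eigenvalues of $\rho(T)$ lie on a single cyclic orbit whose order is equal to the level, which then gives the common denominator of $h_1$ and $h_2$. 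Congruence is automatic because $\ker\rho$ equals $\Gamma(5)$ or $\Gamma(7)$.

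The main obstacle I anticipate is the bookkeeping in the primitive case: matching the Blichfeldt-Miller-Dickson classification against the rigid local exponents and ruling out primitive representations at any level other than $5$ or $7$. A secondary difficulty is converting the representation-theoretic structure in the imprimitive case into the sharp denominator bound of $16$ and the exact case-split (1)(a) versus (1)(b); this will require some care to avoid relying on the fuller analysis in \cite{FM3}.
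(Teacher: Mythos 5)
Your overall two-case architecture (imprimitive vs.\ primitive) matches the paper, as do several landmarks you identify: the role of $\Gamma_0(2)$ and induction in the imprimitive case, and $L_2(5)$ and $L_2(7)$ in the primitive one. However, the sketch has a systematic gap: nowhere does it use the fact that the components of the character vector have \emph{integral} Fourier coefficients. In the paper's proof this is the decisive arithmetic input, and without it the two endpoints you are trying to reach cannot be obtained from the structure theory alone.

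Concretely, in the imprimitive case the bound ``$n \mid 24$'' (whence the denominators of $h_1, h_2$ divide $16$) does not follow from ``the $2$-power cyclic quotients of $\Gamma_0(2)^{\mathrm{ab}}$.'' The abelianization of $\Gamma_0(2)/\langle \pm I\rangle$ is $\ZZ \times \ZZ/2$, so the character value $\lambda = \chi(U)$ can be a root of unity of \emph{any} order. The restriction to $n \mid 24$, $n \neq 1,3$ is precisely Theorem 21 of \cite{FM3}, proved by a bounded-denominators argument: only these $\lambda$ produce a three-dimensional VVMF whose coordinates can have integral $q$-expansions. For the same reason the congruence property is not a formality about characters of $\Gamma_0(2)$; a generic finite-order character of $\Gamma_0(2)$ does \emph{not} factor through a congruence subgroup, and it is again the integrality condition that filters out the noncongruence ones. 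You explicitly say you want to avoid the ``fuller analysis in \cite{FM3},'' but it is that analysis (or some equivalent replacement) that carries the arithmetic weight here.

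In the primitive case, ``Beukers--Heckman guarantees that $\rho(\Gamma)$ is finite'' is not correct. Their finiteness criterion is an interlacing condition on the local exponents at $K=0$ and $K=\infty$, and for generic exponents at $K=0$ (which encode $-c/24$ and the $h_j$) the primitive hypergeometric group is infinite, Zariski dense in $\SL_3$ or $\SO_3$. Finiteness must be established separately in this VOA setting; the paper does so by group-theoretic arguments (and ultimately again via integrality), not by citing \cite{BH}. In addition, the Blichfeldt--Miller--Dickson list of finite primitive subgroups of $\PGL_3(\CC)$ also contains $A_6 \cong L_2(9)$ and the Hessian group $3^2 \rtimes \SL_2(3)$ together with its subgroups of orders $72$ and $36$. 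Your sketch gestures at excluding these via ``the constraints and the relation $S^2 = (ST)^3$'' but supplies no argument; the paper rules out $A_6$ by a counting argument showing it is not a $(2,3)$-generated quotient of $\Gamma$ (Lemma \ref{thmprimcong}(b)), and rules out the Hessian/solvable case using the cyclic abelianization $\Gamma^{\mathrm{ab}} \cong \ZZ/12$. These exclusions need to be spelled out; without them the primitive case is open.

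It is also worth noting that the paper deliberately avoids the direct route through \cite{BH} (see the opening of Section \ref{s:monodromy}): Beukers--Heckman classify monodromy of the free group on two generators, whereas here one needs the representation to factor through $\SL_2(\ZZ)$ and to be congruence, and the ``scalar shift'' formalism in \cite{BH} is too coarse for that. Your sketch does not address how to bridge this gap.
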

  
  We describe how to classify the representations of Theorem \ref{t:repclass}, and give more detailed information about them, in the following sections.
  
  \subsection{Some generalities}
  \label{ss:generalities}
  We begin with some general facts about $\Gamma$ and the representation $\rho$ that we shall need. 

Let $\Gamma\df \SL_2(\ZZ)$ and let $U$ be the left $\CC[\Gamma]$-module furnished by the representation $\rho$ of $\Gamma$ associated to our MLDE \eqref{MLDE1}.In effect, $U=\ch_V$, though this particular realization of $U$ will be unhelpful in this Subsection. We use the following notation for elements in $\Gamma$:
\begin{align*}
R&\df \twomat 01{-1}{-1},&  S&\df \twomat 0{-1}10, & T&\df \twomat 1101. 
\end{align*}

\begin{lem}
  \label{lemS2}
  The following hold:
\begin{enumerate}
\item[(a)] If $\gamma\in\Gamma$ then $\det\rho(\gamma)=\pm1$.
\item[(b)] $\rho(S^2)=I$.
\end{enumerate}
\end{lem}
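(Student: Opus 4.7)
The plan is to prove both parts by leveraging the abelianization of $\Gamma$ together with the irreducibility of $\rho$, using the datum $\det\rho(T)=-1$ already recorded in Lemma \ref{lemc}(b). Neither part should require any computation beyond an application of Schur's lemma.

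For part (a), I would start from the well-known fact that $\Gamma^{\mathrm{ab}}\cong\ZZ/12\ZZ$, generated by the image of $T$. (This drops out of the standard presentation $\langle S,T\mid S^4=1,\ (ST)^3=S^2\rangle$: abelianizing gives $4\bar S=0$ and $\bar S=-3\bar T$, so $12\bar T=0$ and $\bar T$ generates.) Any one-dimensional character of $\Gamma$ — in particular $\det\rho$ — therefore factors through this cyclic quotient and is determined by its value on $T$. Since Lemma \ref{lemc}(b) supplies $\det\rho(T)=-1$, an element of order $2$, the image of $\det\rho$ lies in $\{\pm1\}$, proving (a).

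For part (b), I would invoke the fact that $S^2=-I$ is central in $\Gamma$. Because $\rho$ is irreducible, Schur's lemma forces $\rho(S^2)=\lambda I$ for some $\lambda\in\CC^\times$; this is the step that genuinely uses the irreducibility hypothesis. The relation $S^4=I$ gives $\lambda^2=1$, so $\lambda=\pm1$. Taking determinants, $\lambda^3=\det\rho(S^2)=(\det\rho(S))^2$, and by part (a) $\det\rho(S)\in\{\pm1\}$, whence $\lambda^3=1$. The two constraints $\lambda^2=1$ and $\lambda^3=1$ together force $\lambda=1$, giving $\rho(S^2)=I$.

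There is really no obstacle here — this is a short warm-up lemma whose only subtlety is remembering that irreducibility is what powers Schur's lemma in (b). As a sanity check, part (b) can alternatively be read off directly from the local exponents $0,1,\tfrac{1}{2}$ at $K=1$ in \eqref{localindices2}: these identify the eigenvalues of $\rho(S)$ as $1,1,-1$, which visibly square to $1$. I prefer the Schur/determinant route since it is self-contained and naturally packages (a) and (b) together.
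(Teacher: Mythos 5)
Your argument is correct, and it takes a genuinely different route from the paper for both parts. For (a), the paper first reads off the eigenvalues of $\rho(R)$ (the three cube roots of unity, coming from the local exponents at $K=\infty$) to get $\det\rho(R)=1$, then combines this with $\Gamma=\langle R,T\rangle$ and $\det\rho(T)=-1$; you instead observe that $\det\rho$ factors through $\Gamma^{\mathrm{ab}}\cong\ZZ/12\ZZ$, which is generated by $\bar T$, so the image of $\det\rho$ is generated by $-1$. Your version is a bit cleaner in that it does not need the $R$-eigenvalue data. For (b), the paper argues by contradiction: if $\rho(S^2)=-I$ then, since $\det(-I_3)=-1$, the $\det\rho=1$ locus $G$ is an index-$2$ normal subgroup missing $S^2$, giving $\Gamma=G\times\langle S^2\rangle$; they then note $G\supseteq\Gamma(2)\ni S^2$, a contradiction. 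Your Schur-lemma argument, forcing $\rho(S^2)=\lambda I$ with $\lambda^2=1$ and then $\lambda^3=\det\rho(S^2)=(\det\rho(S))^2=1$, reaches the same conclusion more economically and without invoking any congruence-subgroup input; both arguments crucially exploit that $n=3$ is odd via $\det(-I_3)=-1$ (resp.\ $\gcd(2,3)=1$). One small caution on your closing sanity check via the local exponents $0,1,\tfrac12$ at $K=1$: since two of these exponents differ by an integer, one must in principle worry about a Jordan block (logarithmic solutions), so that check tacitly relies on knowing $\rho(S)$ has finite order; the Schur route you chose as primary is immune to this.
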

\begin{proof}
  Because $\rho$ is irreducible then $\rho(R)$ has the $3$ cube roots of unity as eigenvalues, and in particular $\det\rho(R)=1$. However $\Gamma=\langle R, T\rangle$, and we have seen in Lemma \ref{lemc} part (b) that $\det\rho(T)=-1$. Now part (a) of the present Lemma follows.

To prove part (b) assume that it is false. Then $\rho(S^2)=-I$, and it follows from (a) that there is a subgroup $G\unlhd\Gamma$ of index $2$ such that $\Gamma=G\times\langle S^2\rangle$. But this is impossible, because $G$ must contain the congruence subgroup $\Gamma(2)$, whereas $S^2\in\Gamma(2)$. This completes the proof of the Lemma.
\end{proof}

Part (b) informs us that $\rho$ is an \emph{even} representation, i.e., it factors through the quotient $\PSL_2(\ZZ)\df \Gamma/\langle \pm I\rangle$. Furthermore, we have
\begin{cor}
  \label{cordet1}
  The subgroup of $\rho(\Gamma)$ that acts on $W$ with determinant $1$ has index $2$.
\end{cor}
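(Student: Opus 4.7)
The plan is to view the corollary as an immediate consequence of the determinant homomorphism $\det \circ \rho \colon \Gamma \to \CC^{\times}$ together with the two preceding lemmas. By Lemma~\ref{lemS2}(a) this homomorphism takes values in $\{\pm 1\}$, and by Lemma~\ref{lemc}(b) the element $T$ is sent to $-1$, so the map is surjective. Its kernel
\[
H \df \{\gamma \in \Gamma : \det\rho(\gamma) = 1\}
\]
is therefore a subgroup of $\Gamma$ of index exactly $2$.

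Next I would observe that the image $\rho(H) \subseteq \rho(\Gamma)$ is precisely the set of elements of $\rho(\Gamma)$ acting on the representation space with determinant $1$. To identify its index in $\rho(\Gamma)$, I would note that $\ker\rho \subseteq H$: if $\rho(\gamma) = I$ then of course $\det\rho(\gamma) = 1$. Consequently the composite
\[
\Gamma/H \longrightarrow \rho(\Gamma)/\rho(H)
\]
induced by $\rho$ is a well-defined surjection between groups of the same cardinality (namely $2$), hence an isomorphism. Thus $[\rho(\Gamma) : \rho(H)] = [\Gamma : H] = 2$, which is the claim.

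There is essentially no obstacle here; the content was already absorbed into Lemma~\ref{lemS2} and Lemma~\ref{lemc}, and the only observation worth emphasizing is the containment $\ker\rho \subseteq H$, which ensures the quotient on the group side descends faithfully to the quotient on the image side.
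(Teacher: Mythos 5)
Your argument is correct and is exactly the paper's proof, which simply cites Lemma~\ref{lemS2}(a) and Lemma~\ref{lemc}(b); you have merely spelled out the elementary bookkeeping (surjectivity of $\det\circ\rho$ onto $\{\pm1\}$ and the containment $\ker\rho\subseteq H$) that the paper leaves implicit.
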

\begin{proof}
  This follows from Lemmas \ref{lemS2}(a) and \ref{lemc}(b).
\end{proof}

The next result is well-known. We give a proof for completeness.

\begin{lem}\label{thmprimcong}
  The following hold:
\begin{enumerate}
\item[(a)] Suppose that $N\unlhd\Gamma$ and that $\Gamma/N\cong L_2(7)$. Then  $N=\Gamma(7)\langle S^2\rangle$.
\item[(b)] $A_6\cong L_2(9)$ is not a quotient of $\Gamma$.
\end{enumerate}
\end{lem}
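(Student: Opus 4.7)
Both parts hinge on the observation that $S^2 = -I$ is the unique non-trivial central element of $\Gamma$, while $L_2(7)$ and $A_6$ are simple groups with trivial centre. Hence in either case $S^2 \in N$, and the surjection factors through $\PSL_2(\ZZ) = \Gamma/\langle S^2\rangle \cong \ZZ/2 \ast \ZZ/3$, with the latter generated by the images $\bar S, \bar R$ of $S, R$ (of orders $2, 3$). Let $(\sigma, \tau)$ denote their images in the target. Since the target is not cyclic, $\sigma$ has order exactly $2$ and $\tau$ has order exactly $3$. Because $T = SR$, the image of $T$ is $\sigma \tau$, and $\langle \sigma, \tau \rangle$ is a quotient of the triangle group $\Delta(2, 3, n)$ where $n = \ord(\sigma \tau)$.

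\textbf{Part (b).} The element orders of $A_6$ are $\{1, 2, 3, 4, 5\}$, which forces $n \in \{2, 3, 4, 5\}$ (the case $n = 1$ is ruled out because $\sigma, \tau$ have different orders). For these values the triangle group $\Delta(2, 3, n)$ is finite of order $6, 12, 24, 60$ respectively, none divisible by $|A_6| = 360$, so no surjection $\Gamma \twoheadrightarrow A_6$ exists.

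\textbf{Part (a).} The analogous dichotomy for $L_2(7)$ (with orders $\{1, 2, 3, 4, 7\}$) restricts $n$ to $\{2, 3, 4, 7\}$; the triangle groups with $n < 7$ have orders at most $24 < 168 = |L_2(7)|$, so only $n = 7$ remains and $\bar T$ has order $7$ in every quotient $L_2(7)$ of $\Gamma$. For the uniqueness of $N$ I would apply Frobenius's formula: using the character table of $L_2(7)$ (six irreducible characters of degrees $1, 3, 3, 6, 7, 8$ on classes $1A, 2A, 3A, 4A, 7A, 7B$), a short computation gives exactly $168$ pairs $(\sigma, \tau) \in 2A \times 3A$ with $\sigma \tau \in 7A$, and likewise $168$ with $\sigma \tau \in 7B$, for $336$ in total. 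The maximal subgroups of $L_2(7)$ (the two classes of $S_4$, which lack elements of order $7$, and the Borel $7{:}3$, which lacks involutions) contain no such pair, so each of the $336$ pairs generates $L_2(7)$, giving $336$ surjections. Since $\Aut(L_2(7)) = \PGL_2(\FF_7)$ has order $336$ and acts freely by post-composition on surjections $\Gamma \twoheadrightarrow L_2(7)$, there is a single $\Aut$-orbit and hence a unique kernel. The standard reduction $\Gamma \to \SL_2(\ZZ/7) \to L_2(7)$ exhibits this kernel as $\Gamma(7)\langle S^2 \rangle$.

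\textbf{Main obstacle.} The triangle-group argument quickly disposes of (b) and reduces (a) to the Hurwitz case $n = 7$; the real subtlety is the uniqueness of $N$. The Frobenius count is routine but must be executed with care regarding the inverse-class convention $\overline{\chi(7A)} = \chi(7B)$ (inversion swapping the two order-$7$ classes), and one must verify via the list of maximal subgroups that every pair of type $(2, 3, 7)$ indeed generates all of $L_2(7)$ rather than some proper subgroup.
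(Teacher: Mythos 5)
Your argument is correct and reaches the same conclusion as the paper, but via a noticeably different counting route, and you treat part (b) explicitly where the paper declines to.

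For (a), both proofs reduce uniqueness of $N$ to showing that the surjections $\Gamma/\langle S^2\rangle \twoheadrightarrow L_2(7)$ form a single $\PGL_2(7)$-orbit, and both do this by exhibiting exactly $336 = |\PGL_2(7)|$ of them with a free action. The difference is in how the count $336$ is produced. The paper enumerates \emph{all} generating pairs $(\sigma,\tau)$ of orders $(2,3)$ by inclusion--exclusion over the subgroup lattice: $21\cdot 56$ total pairs minus those landing in copies of $S_3$, $A_4$, $S_4$. You instead first observe that $\sigma\tau$ must have order $7$ (since $\Delta(2,3,n)$ for $n\le 5$ is too small to cover $L_2(7)$ and $L_2(7)$ has no element of order $5$ or $6$), then count $(2,3,7)$-triples by Frobenius's character-sum formula, and separately confirm generation via the list of maximal subgroups. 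Your method front-loads the structure theory (triangle groups) and outsources the arithmetic to the character table; the paper's method is more elementary, working only with subgroup orders. Both are complete. One small remark: because $\chi(3A)=0$ for both degree-$3$ characters of $L_2(7)$, the ``inverse-class convention'' issue you flag never actually bites --- the only nonvanishing term in the Frobenius sum is the trivial character, so the count is insensitive to the $7A \leftrightarrow 7B$ ambiguity.

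For (b), the paper says the proof is ``essentially the same'' and omits it; your triangle-group argument ($n = \operatorname{ord}(\sigma\tau) \in \{2,3,4,5\}$, and $|\Delta(2,3,n)| \in \{6,12,24,60\}$, none of which is a multiple of $360$) is clean, self-contained, and arguably simpler than adapting the paper's inclusion--exclusion to show the count is zero. This is a genuine improvement in exposition.
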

\begin{proof} The proofs of each of these assertions are essentially the same. We deal with (a) and skip the proof of (b). We may, and shall, calculate in the group $\Gamma/\{\pm I\}$.

Part (a) is essentially explained by the  \emph{automorphism group} $\PGL_2(7)$ of $L_2(7)$, which has order $336$.

Count ordered pairs of elements of orders $2$ and $3$ that generate the abstract group $L_2(7)$: if this set is denoted by $X$, we claim that $X$ is a \emph{$\PGL_2(7)$-torsor}, i.e., $\PGL_2(7)$ acts transitively (by conjugation) on $X$ and $\abs{X}=\abs{\PGL_2(7)}$. The action is evident, so it suffices to check the cardinality of $X$.

For example, the total number of pairs of elements of order $2$ and $3$ respectively equal $21\cdot 56$, whereas the number of $S_3$-pairs is $6\cdot 28$, the number of $A_4$-pairs is $2\cdot 7\cdot 24$, and the number of $S_4$-pairs is $2\cdot 7\cdot 24$. Therefore we find that the number of $L_2(7)$-pairs is $21\cdot 56- 12(14+28+28)=336$.

Finally, let $\nu:\Gamma/\{\pm I\}\rightarrow L_2(7)$ be reduction mod $7$, and let $\varphi:\Gamma/\{\pm I\}\rightarrow L_2(7)$ be any surjection.
\[
\xymatrix{
\Gamma/\{\pm I\}\ar[rr]^{\nu}\ar[rrd]_{\varphi}&&L_2(7)\ar[d]^{\alpha}\\
&&L_2(7)}
\]
Because $X$ is a $\PGL_2(7)$-torsor, there is $\alpha \in \PGL_2(7)$ that makes the diagram commute. Therefore, $\varphi = \alpha \circ \nu$ has kernel $\Gamma(7)\langle S^2\rangle /\langle S^2\rangle$. This completes the proof of part (a) of the Lemma.
\end{proof}

\subsection{The imprimitive case}
\label{Simprim}
Suppose that $N \unlhd \Gamma$ is a normal subgroup. Suppose further that the \emph{restriction} $U|_N$ of $U$ to $N$ is \emph{not} irreducible. Then there is a direct sum decomposition into $1$-dimensional $N$-submodules
\[
U|_N \cong U_0\oplus U_1\oplus U_2
\]
and there are  just two possibilities for the \emph{Wedderburn structure}, namely
\begin{enumerate}
\item[(i)] (One Wedderburn component) the $W_j$ are pairwise \emph{isomorphic} as $N$-modules;
\item[(ii)] (Three Wedderburn components) the $W_j$ are pairwise \emph{nonisomorphic} as $N$-modules, and they are transitively permuted among themselves by the action of $\Gamma$.
\end{enumerate}
Care is warranted because the $U_j$ may \emph{not} be the three $T$-eigenspaces. If case (ii) pertains, the representation $\rho$ is called \emph{imprimitive}. Otherwise, it is \emph{primitive}.

\begin{lem}
  \label{lemZ}
  Suppose that $N$ has one Wedderburn component. Then $\rho(N) \subseteq Z(\rho(\Gamma))$ and $\rho(N)$ is isomorphic to a subgroup of $\ZZ/6\ZZ$.
 \end{lem}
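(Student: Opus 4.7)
The plan is to observe that the one-Wedderburn-component hypothesis forces $N$ to act on $U$ by scalar matrices in a single basis, after which both conclusions drop out immediately.

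First, I would unpack the hypothesis. Saying that $U|_N$ has a single Wedderburn component with three pairwise-isomorphic $1$-dimensional $N$-summands means $U|_N$ is \emph{isotypic} of the form $\chi^{\oplus 3}$ for a single character $\chi \colon N \to \CC^\times$. Choosing a basis of $U$ adapted to the decomposition $U|_N = U_0 \oplus U_1 \oplus U_2$ (and within each $U_j$, a basis vector on which $N$ acts by $\chi$), we see that $\rho(n) = \chi(n) \cdot I_3$ for every $n \in N$. Thus $\rho(N)$ consists of scalar matrices and therefore lies in the center of $\GL(U)$; a fortiori $\rho(N) \subseteq Z(\rho(\Gamma))$, which is the first claim. (One can alternatively bypass the basis choice by invoking Schur's lemma: since $\rho$ is irreducible and each $\rho(n)$ commutes with $\rho(\Gamma)$ — which follows from the normality of $N$ together with the isotypic nature of $U|_N$ — each $\rho(n)$ must be scalar.)

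For the second claim, the group $\rho(N) = \{\chi(n) I_3 : n \in N\}$ is naturally identified with the subgroup $\chi(N) \subseteq \CC^\times$. Applying Lemma~\ref{lemS2}(a) to any $n \in N \subseteq \Gamma$ gives $\det \rho(n) = \chi(n)^3 \in \{\pm 1\}$, so $\chi(n)^6 = 1$. Hence $\chi(N) \subseteq \mu_6$ and $\rho(N)$ embeds into the cyclic group $\mu_6 \cong \ZZ/6\ZZ$, finishing the proof.

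There is no real obstacle here: the argument is just the standard observation that an isotypic decomposition with $1$-dimensional constituents forces the group to act by scalars, combined with the sign restriction on determinants already established in Lemma~\ref{lemS2}(a). The only subtlety to watch is distinguishing between ``the $U_j$ are abstractly isomorphic as $N$-modules'' (which is what ``one Wedderburn component'' means and what is needed) and the stronger statement that they coincide as subspaces; the former is exactly what lets one pass to a uniform scalar action.
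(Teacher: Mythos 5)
Your proof is correct and takes essentially the same route as the paper: the isotypic hypothesis forces $\rho(N)$ to consist of scalar matrices (hence central), and the determinant constraint from Lemma~\ref{lemS2}(a) bounds the order of those scalars by $6$. The only cosmetic difference is that the paper cites Corollary~\ref{cordet1} where you invoke Lemma~\ref{lemS2}(a) directly, but the content is the same (your parenthetical Schur's-lemma aside is a bit circular as stated, but it is dispensable and does not affect the main argument).
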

 \begin{proof}
   By hypothesis, each element $\gamma \in N$ is such that $\rho(\gamma)$ acts on each $W_j$ as multiplication by the \emph{same} scalar. In other words, $\rho(\gamma)$ is a scalar matrix. As such it lies in the center $Z(\rho(\Gamma))$. This proves the first assertion of the Lemma. Suppose that $\lambda$ is the eigenvalue for such a $\rho(\gamma)$. Then we must have $\lambda^6=1$ by Corollary \ref{cordet1}, and the second assertion of the Lemma follows.
\end{proof}

We now assume that $\rho$ is imprimitive, and choose a maximal element $K$ in the poset of normal subgroups $K_1 \unlhd \Gamma$ with the property that $W|_{K_1}$ is not irreducible. Let the Wedderburn decomposition be
\begin{eqnarray*}
U|_K= W_0\oplus W_1 \oplus W_2.
\end{eqnarray*}
Note that elements of $K$ are represented by \emph{diagonal matrices}, whence $\rho(K)$ is \emph{abelian}.

By assumption, $\Gamma$ permutes the subspaces $W_j$ among themselves and acts transitively on this set. The \emph{kernel} of this action is a normal subgroup leaving each $W_j$ invariant, and by the maximality of $K$, it is none other than $K$ itself. Hence $\Gamma/K$ is isomorphic to one of $\ZZ/3\ZZ$ or $S_3$, being a transitive subgroup of $S_3$ in its action on $3$ letters.

It follows from the previous paragraph that one of the powers $T^s$ ($s=1, 2, 3$) lies in $K$. It is well-known (e.g., \cite{KLN}) that the \emph{normal closure} of $T^s$ in $\Gamma$ is the principal congruence subgroup $\Gamma(s)$. Hence $\Gamma(s) \subseteq K$. Now note that because $K \neq \Gamma$ then $s \neq 1$.

Next we show that the assumption $\Gamma/K \cong \ZZ/3\ZZ$ leads to a contradiction, so assume it is true. Then $K$ is the unique normal subgroup of index $3$, and as such it has just three classes of subgroups of order $4$ which generate $K$. It follows that $K/K'\langle S^2\rangle \cong  (\ZZ/2\ZZ)^2$. But $\rho(K)$ is abelian, hence $\rho(K) \cong  (\ZZ/2\ZZ)^2$, $K=\Gamma(3)\langle S^2\rangle$, and $\Gamma/K \cong A_4$. But then $\rho(T)$ has order $3$, contradicting Lemma \ref{lemc}(b).

This reduces us to the Case when $\Gamma/K \cong S_3$. Suppose also that $s=3$. Then $R$ and $T$ jointly generate a subgroup of index $2$ in $\Gamma$, a contradiction because they are generators of $\Gamma$. It follows that $s=2$. In this case we must have $K=\Gamma(2)$ because $\Gamma/\Gamma(2) \cong S_3$. Now $\Gamma(2)/\langle S^2\rangle$ is a free group of rank $2$. Therefore because $\rho(K)$ is abelian it is a homocyclic quotient of $\ZZ^2$ (remember that $\rho(S^2)=I$). Now because $\rho(T)$ has distinct eigenvalues, then it \emph{cannot} have order $2$. Therefore $\rho(T^2)$ is a nonidentity torsion element of $\rho(K)$. This implies that $\rho(K) \cong (\ZZ/t\ZZ)^2$ for some integer $t$, and in particular $\rho(\Gamma)$ is \emph{finite} (of order $6t^2$).
 
At this point we have maneuvered ourselves into a position where we can apply the results of \cite{FM3} concerning finite-image, imprimitive, irreducible representations of $\Gamma/\langle S^2\rangle$. Indeed, setting $H =\Gamma_0(2)$, $\rho$ is an induced representation $\rho =\Ind_H^{\Gamma} \chi$ for some linear character
\[
\chi:\Gamma_0(2) \rightarrow\CC^{\times}.
\]
of \emph{finite order}. In  the notation of \cite{FM3}, there is a positive integer $n$ and a primitive $n^{\textrm{th}}$ root of unity $\lambda$ such that
\begin{align*}
\chi(U)& =\lambda,&   \chi(V)&=1, & \chi(S^2) &=1,
\end{align*}
where the images of $U\df \stwomat 1021$ and  $V \df \stwomat {-1}{1}{-2}{1}$ generate the abelianization of $H/\langle S^2\rangle$. In \cite{FM3} $\chi$ takes the value $\epsilon =\pm 1$ on $V$, however the condition $\det\rho(T)=-1$ demands that $\epsilon=1$. Furthermore, the irreducibility of $\rho$ implies that $n\neq 1$ or $3$. 
\begin{prop}
  \label{propimprimreps}
The following hold:
\begin{enumerate}
\item[(a)] $\rho$ is a \emph{congruence representation}, i.e., $ker\rho$ is a congruence subgroup, and all elements in $\ch_V$ are modular functions of weight $0$ and level $2n$;
\item[(b)] $n \mid 24$ and $n\neq 1,3$.
\end{enumerate}
\end{prop}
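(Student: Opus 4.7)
The plan is to exploit the structure $\rho \cong \Ind_H^\Gamma \chi$ established above, where $H = \Gamma_0(2)$ and $\chi$ is a finite-order character with $\chi(U) = \lambda$ (a primitive $n$-th root of unity), $\chi(V) = 1$, and $\chi(S^2) = 1$. First I would verify the matrix identity $T = VU$ by direct computation, from which $\chi(T) = \chi(V)\chi(U) = \lambda$; in particular $\chi(T)$ has order exactly $n$.

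For part (a), I would invoke Huang's modularity theorem (cited in the introduction) to conclude $\ker \rho \supseteq \Gamma(N_0)$ for some $N_0$. Since $\ker \rho$ is the largest normal subgroup of $\Gamma$ contained in $\ker \chi$, the latter is then a congruence subgroup of $H$. To pin down the level as $2n$, I would apply Wohlfahrt's theorem, which equates the level of a congruence subgroup with the least common multiple of its cusp widths in $\Gamma$. The widths of $\ker \chi$ are readily computed: at the cusp $\infty$ the width equals $\mathrm{ord}(\chi(T)) = n$; at the cusp $0$ the width is the least positive $k$ with $ST^kS^{-1} \in \ker \chi$, and a brief matrix calculation shows $ST^kS^{-1} \in H$ iff $k$ is even, in which case $ST^{2m}S^{-1} = U^{-m}$, forcing this width to equal $2n$. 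Hence $\ker \chi \supseteq \Gamma(2n)$, and normality of $\Gamma(2n)$ in $\Gamma$ then yields $\Gamma(2n) \subseteq \ker \rho$, completing part (a).

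For part (b), I would appeal to the classical theory of Dedekind $\eta$-multipliers on $\Gamma_0(2)$: the quotient $\eta(\tau)/\eta(2\tau)$ is a modular function on $\Gamma_0(2)$ whose transformation law defines a congruence character $\chi_0$ with $\chi_0(T) = e^{-2\pi i/24}$, a primitive $24$-th root of unity; moreover every one-dimensional congruence character of $\Gamma_0(2)$ trivial on $V$ is an integer power of $\chi_0$ (up to the sign contribution at $V$). Since $\chi$ is congruence by (a) and $\chi(T) = \lambda$, the order $n$ of $\lambda$ must divide $24$. The exclusion $n \notin \{1,3\}$ is already noted in the discussion preceding the proposition: for these values two of the putative eigenvalues $\lambda, \pm\lambda^{-1/2}$ of $\rho(T)$ coincide, contradicting the distinctness of $T$-eigenvalues forced by irreducibility (Tuba--Wenzl, \cite{TW}).

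The main obstacle is part (a), where the congruence property of $\rho$ (via Huang) must be coupled with Wohlfahrt's nontrivial characterization of congruence subgroups by cusp widths in order to extract the exact level $2n$. The cusp-width computation itself, and the identity $T = VU$, are routine. Granted part (a), the divisibility $n \mid 24$ in (b) follows quickly from the classical $\eta$-multiplier analysis on $\Gamma_0(2)$, both of these tools being imported standard facts rather than new results.
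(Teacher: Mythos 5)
Your proposal is correct in outline but takes a genuinely different route from the paper. The paper's proof of this proposition is essentially a citation of Theorem 21 of \cite{FM3}, which proves that an \emph{imprimitive} irreducible $3$-dimensional representation of $\Gamma$ whose associated vector-valued modular form has \emph{integral Fourier coefficients} is automatically a congruence representation with $n \mid 24$. That is, the paper obtains congruence from integrality — this is precisely the unbounded-denominator direction that the authors emphasize in the introduction. You instead import congruence from Huang's modularity theorem, which bypasses the integrality mechanism entirely. Both routes are valid for the proposition as stated, but yours does not contribute to the unbounded-denominator statement the authors are advertising as a byproduct of Section~3, whereas the paper's does (by relying on \cite{FM3}).

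For the level, the paper's argument is shorter than yours: from the eigenvalue data $\{\lambda, \pm\sigma\}$ with $\sigma^2 = \bar\lambda$ (\cite{FM3}, Prop.~2) one reads off directly that $\rho(T)$ has order $2n$, and since $\Gamma(N)$ is the normal closure of $T^N$, the level of the normal congruence subgroup $\ker\rho$ is exactly the order of $\rho(T)$. Your Wohlfahrt plus cusp-width computation reaches the same conclusion, and your width calculations (width $n$ over $\infty$ via $\chi(T)=\lambda$; width $2n$ over $0$ via $ST^{2m}S^{-1}=U^{-m}$) are correct, but they are not needed given the explicit $T$-eigenvalues already recorded before the proposition.

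For part (b), the paper again just cites \cite{FM3}. Your $\eta$-multiplier argument is the weak point of the proposal as written: the assertion that every congruence character of $\Gamma_0(2)$ trivial on $V$ and $-I$ is a power of the multiplier of $\eta(\tau)/\eta(2\tau)$ is true but it is a nontrivial classical fact (ultimately resting on the structure of the abelianizations of $\Gamma_0(2)/\Gamma(2n)$ — the perfectness of $\SL_2(\ZZ/p^e)$ for $p \geq 5$, the $\ZZ/3$ abelianization at $p=3$, and the $2$-adic analysis), and you state it without proof or reference. In a full write-up this claim would need a citation or an argument of its own; as it stands it carries the same logical weight as the paper's citation of \cite{FM3}, so the net gain is unclear. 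Finally, the exclusion $n \neq 1,3$ was already established in the paragraph immediately preceding the proposition (from irreducibility of $\rho$ forcing distinct $T$-eigenvalues), so it does not need to be reproved here.
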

\begin{proof}
  By construction, $\ch_V$ is spanned by functions having $q$-expansions with \emph{integral Fourier coefficients}. Now the  Proposition is essentially a restatement of Theorem 21 of \cite{FM3} 

The only assertion not explicitly stated in \cite{FM3} is the statement that the level is $2n$. This amounts to showing that $\rho(T)$ has order $2n$, and this is follows from a knowledge of the eigenvalues of $\rho(T)$, which are as follows (\cite{FM3}, Proposition 2): 
\begin{equation}
  \label{eigendata}
\{\lambda, \pm \sigma\} \quad \textrm{where}\quad \sigma^2=\bar{\lambda}.
\end{equation}
\end{proof}

From Proposition \ref{propimprimreps} together with \eqref{eigendata}, there is an \emph{even divisor} $n$ of $24$ and an integer $k$ coprime to $n$ such that the eigenvalues of $\rho(T)$ are $\{e^{2\pi i k/n}, e^{-2\pi i k/2n}, e^{2\pi i(n- k)/2n}\}$. The three \emph{exponents} occurring here are equal $\pmod{\ZZ}$, and in some order, to the exponents $\{r_0, r_1, r_2\}$ occurring in \eqref{Texps}. These in turn are equal $\pmod{\ZZ}$, and in the same order, to $\{-\tfrac{c}{24}, h_1{-}\tfrac{c}{24}, h_2{-}\tfrac{c}{24}\}$.

It follows that $\{h_1, h_2\}$ is congruent $\pmod{\ZZ}$ to one of $\{ -\tfrac{3k}{2n}, \tfrac{n-3k}{2n}\}$,  $\{\tfrac{3k}{2n}, \tfrac{1}{2}\}$ or  $\{\tfrac{3k+n}{2n}, \tfrac{1}{2}\}$. Because $n$ is an even divisor of $24$, all of the rational numbers involved here have denominators equal to $2$, $4$, $8$ or $16$ and in fact we obtain the following more precise result:
\begin{prop}
\label{imprimdenoms}
One of the following holds:
\begin{enumerate}
\item[(a)] One of $h_1$ or $h_2$ is an element of $\tfrac{1}{2}+\ZZ$, and the other has denominator equal to $4, 8$ or $16$;
\item[(b)] The denominators of $h_1$ and $h_2$ are equal, and both are equal to $4,8$ or $16$.
\end{enumerate}
Furthermore, we always have $2c\in\ZZ$, and in particular the conclusions of Corollary \ref{corc/2} apply.
\end{prop}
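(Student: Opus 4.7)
The plan is to read off the denominators of $h_1$ and $h_2$ directly from the three parametric forms for $\{h_1,h_2\}\pmod\ZZ$ listed in the paragraph preceding the statement, and then to verify the integrality of $2c$ via Lemma \ref{lemc}(a).

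First, recall from Proposition \ref{propimprimreps} that $n$ is an even divisor of $24$ with $n\neq 1,3$, so $n\in\{2,4,6,8,12,24\}$, and $k$ is coprime to $n$. In particular $k$ is odd, so $\gcd(k,2n)=1$ and $\gcd(3k,2n)=\gcd(3,2n)\in\{1,3\}$, with $\gcd(3,2n)=3$ precisely when $3\mid n$. Consequently the reduced denominator of $3k/(2n)$ is $2n/\gcd(3,2n)$, which when $n$ runs over $\{2,4,6,8,12,24\}$ takes the values $\{4,8,4,16,8,16\}\subseteq\{4,8,16\}$. The same denominator occurs for $-3k/(2n)$ and for $\tfrac{n-3k}{2n}=\tfrac12-\tfrac{3k}{2n}$, since adding $\tfrac12$ preserves the denominator whenever that denominator is already even and at least $4$.

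Next, I would run through the three cases listed in the text. In the first case, $\{h_1,h_2\}\equiv\{-\tfrac{3k}{2n},\tfrac{n-3k}{2n}\}\pmod\ZZ$, and the observation above shows that \emph{both} $h_1$ and $h_2$ have the same denominator, lying in $\{4,8,16\}$; this is conclusion (b). In the second and third cases, one of $\{h_1,h_2\}$ is congruent to $\tfrac12$ modulo $\ZZ$, while the other is congruent to $\tfrac{3k}{2n}$ or $\tfrac{3k+n}{2n}$, both of which by the same analysis have denominator in $\{4,8,16\}$ (note the excluded denominators $1$ and $2$ would force $n\mid 3$, contradicting $n\ge 4$ or $n=2$ with $k$ odd, which one checks directly); this is conclusion (a).

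Finally, for the assertion $2c\in\ZZ$, apply Lemma \ref{lemc}(a) to rewrite $2c=16(h_1+h_2)-8$, so it suffices to show $h_1+h_2$ has denominator dividing $16$. In cases two and three, $h_1+h_2\equiv\tfrac12+\tfrac{3k}{2n}\pmod\ZZ$ has denominator in $\{4,8,16\}$. In case one, $h_1+h_2\equiv\tfrac12-\tfrac{3k}{n}\pmod\ZZ$, whose denominator is $n/\gcd(3,n)\in\{2,4,2,8,4,8\}$, in every instance a divisor of $16$. Thus $16(h_1+h_2)\in\ZZ$ and hence $2c\in\ZZ$, so Corollary \ref{corc/2} applies. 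The only part of the argument requiring care is the bookkeeping to rule out denominators $1$ and $2$ in the $\tfrac{3k}{2n}$ expressions; this follows from the constraint $n\neq 1,3$ combined with $\gcd(k,2n)=1$, so there is no serious obstacle.
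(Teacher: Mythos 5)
Your proof is correct in substance and follows essentially the same route as the paper: it makes explicit the denominator bookkeeping that the paper leaves implicit in the sentence preceding the Proposition, and then deduces $2c\in\ZZ$ from Lemma~\ref{lemc}(a). Two small slips are worth fixing, neither of which damages the argument. First, the justification ``adding $\tfrac12$ preserves the denominator whenever that denominator is already even and at least $4$'' is false as a general statement (e.g.\ $\tfrac16+\tfrac12=\tfrac23$); what you actually need, and what is true in your cases, is that adding $\tfrac12$ preserves the denominator when that denominator is a multiple of $4$, and your list $\{4,8,16\}$ consists of such numbers. Second, in the $2c\in\ZZ$ paragraph, for the third parametric case one has $h_1+h_2\equiv\tfrac12+\tfrac{3k+n}{2n}\equiv\tfrac{3k}{2n}\pmod\ZZ$, not $\tfrac12+\tfrac{3k}{2n}$; the denominator is still in $\{4,8,16\}$ so the conclusion stands. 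A slightly shorter route to the last claim, and the one the paper intends, is to note that (a) and (b) already give that $h_1$ and $h_2$ each have denominator dividing $16$, whence so does $h_1+h_2$, and then $2c=16(h_1+h_2)-8\in\ZZ$.
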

\begin{proof}
  The assertion regarding the central charge $c$  follows from (a) and (b) together with Lemma \ref{lemc}(a). The Lemma follows.
\end{proof}

\subsection{The primitive case}
\label{SSprim}
The purpose of this Section is to establish results that parallel those of Subsection \ref{Simprim} but now in the case that $\rho$ is \emph{primitive}. This means that if $N \unlhd \Gamma$ then either $U|_N$ is irreducible, or else $N$ is a central subgroup of order dividing $6$ (cf. Lemma \ref{lemZ}). We assume that this holds throughout this Subsection.

In the imprimitive case we were able to rely on the results of \cite{FM3} to restrict the possibilities for $\rho$ to a manageable list. For the case that now presents itself, we will prove
\begin{prop}
  \label{propprimreps}
Suppose that $\rho$ is \emph{primitive}. Then 
\[
\rho(\Gamma)\cong L_2(p) \times\ZZ/r\ZZ,\quad \textrm{(}p=5 \textrm{ or } 7,~ r=2 \textrm{ or } 6\textrm{).}
\]
In all cases $\rho$ is a congruence representation of level $pr$.
\end{prop}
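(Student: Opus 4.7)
The plan is to identify $\rho(\Gamma)$ as a finite group by combining three ingredients: the Beukers-Heckman finiteness classification for hypergeometric monodromies, the classical Blichfeldt classification of primitive irreducible finite subgroups of $\GL_3(\CC)$, and Lemma \ref{thmprimcong}. First I would argue that $\rho(\Gamma)$ is finite: since $\rho(T)$ has finite order by Dong-Li-Mason rationality, $\rho(R)$ has order $3$ with the three cube roots of unity as eigenvalues, and $\rho$ realizes the monodromy of the hypergeometric equation with local data \eqref{localindices2}, the primitive irreducible case lands in the finite list of \cite{BH}.

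Let $Z = Z(\rho(\Gamma))$. By Schur's lemma $Z$ consists of scalars $\lambda I$, and since $\det\rho \subseteq \{\pm 1\}$ by Lemma \ref{lemS2}(a) we have $\lambda^3 = \pm 1$, so $|Z|$ divides $6$. The quotient $G = \rho(\Gamma)/Z$ is an irreducible primitive subgroup of $\PGL_3(\CC)$ generated by an involution (the image of $S$, of order $2$ by Lemma \ref{lemS2}(b)) and an element of order $3$ (the image of $R$). Blichfeldt's classification leaves only $A_5$, $A_6$, or $L_2(7)$ as candidates --- the Hessian group of order $216$ is incompatible with primitivity of $\rho$ itself, since its normal abelian subgroup of rank $2$ would provide a normal $N \unlhd \Gamma$ with $\rho(N)$ abelian but non-scalar, contradicting Lemma \ref{lemZ} --- and then Lemma \ref{thmprimcong}(b) eliminates $A_6 \cong L_2(9)$, leaving $G \cong L_2(p)$ with $p \in \{5, 7\}$. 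Inspection of the character tables of the Schur covers $\SL_2(5)$ and $\SL_2(7)$ shows that every $3$-dimensional irreducible complex representation already factors through $L_2(p)$, so the central extension $1 \to Z \to \rho(\Gamma) \to L_2(p) \to 1$ must split and $\rho(\Gamma) \cong L_2(p) \times Z$. Since $L_2(p)$ is perfect, its image lies in $\SL_3(\CC)$; but $\det\rho(T) = -1$ (Lemma \ref{lemc}(b)) forces $Z$ to contain a scalar with cube $-1$, so $r := |Z|$ is even and $r \in \{2, 6\}$.

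For the congruence assertion, the projection $\rho(\Gamma) \to L_2(p)$ gives a surjection $\Gamma \twoheadrightarrow L_2(p)$; its kernel is $\Gamma(p)\langle S^2\rangle$ by Lemma \ref{thmprimcong}(a) (for $p = 7$) and by an identical $\PGL_2(5) \cong S_5$-torsor argument counting $(2,3)$-generating pairs in $A_5$ (for $p = 5$). The projection $\rho(\Gamma) \to \ZZ/r\ZZ$ is a linear character of $\Gamma$ factoring through $\Gamma^{\mathrm{ab}} \cong \ZZ/12\ZZ$; it is trivial on $\Gamma(r)$ because the abelianization of $\SL_2(\ZZ/r\ZZ)$ is $\ZZ/r\ZZ$ for $r \in \{2, 6\}$. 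Intersecting the two kernels and using $\gcd(p, r) = 1$ gives $\ker\rho \supseteq \Gamma(pr)\langle S^2\rangle$, so $\rho$ is congruence of level dividing $pr$; equality follows because $\rho(T)$ projects nontrivially to each factor of $L_2(p) \times \ZZ/r\ZZ$, hence has order exactly $pr$ in $\rho(\Gamma)$.

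The main obstacle will be the finiteness step: although \cite{BH} classify the primitive finite hypergeometric groups, extracting the precise list in our setup --- matching the modular-group local monodromies at $R$ and $S$ to BH's free-group scalar-shift data, as flagged at the opening of this section --- is the least routine ingredient. Everything else reduces to classical classifications in small dimensions, character-table lookups for $\SL_2(p)$, and a torsor-counting argument already modeled by Lemma \ref{thmprimcong}.
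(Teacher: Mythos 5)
Your argument reaches the same conclusion and is essentially sound, but it differs from the paper's proof precisely in the step where they diverge most self-consciously. The paper eliminates the solvable possibilities by hand: it picks a minimal normal subgroup $M/Z\unlhd\rho(\Gamma)/Z$, shows in Case~1 that a solvable $M$ would force $M\supseteq 3^{1+2}$ extra-special, bounds $\rho(\Gamma)/PZ$ using the cyclicity of $\Gamma^{\mathrm{ab}}$, and derives a contradiction with minimality; it then handles the nonsolvable case by the classification of finite quasisimple groups with a faithful $3$-dimensional projective representation. You instead invoke Blichfeldt's list of primitive finite irreducible subgroups of $\PGL_3(\CC)$ outright and kill the Hessian family in one stroke via Lemma~\ref{lemZ}, since its normal rank-$2$ elementary abelian $3$-group would make $\rho$ imprimitive in the paper's sense. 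That is a legitimate and shorter route, but note that it is exactly the route the authors say they are avoiding in the preamble to Section~\ref{s:monodromy} (the Beukers--Heckman formulation is ``too imprecise for our specific purpose''); what the paper's hands-on argument buys is self-containedness and control, at the cost of length. Both proofs then coincide: exclude $p=9$ via Lemma~\ref{thmprimcong}(b), identify $\rho(\Gamma)\cong L_2(p)\times Z$, use $\det\rho(T)=-1$ to force $2\mid r$, and intersect kernels to get level $pr$. Two places where you actually improve on the paper's writeup: you supply the $\PGL_2(5)\cong S_5$ torsor count for $p=5$ (the paper's Lemma~\ref{thmprimcong} is stated only for $L_2(7)$, though its proof says the argument is the same for $A_5$/$A_6$), and you give a clean splitting argument via the character tables of $\SL_2(p)$ where the paper just asserts $M\cong L_2(p)\times Z$. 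Finally, you are right to flag finiteness of $\rho(\Gamma)$ as the least routine ingredient: the paper never explicitly establishes it, and its appeal to a ``minimal nontrivial normal subgroup'' of $\rho(\Gamma)/Z$ silently presupposes a chain condition; your BH-based finiteness sketch is thin, but at least you identify the gap rather than pass over it.
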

\begin{proof}
  Let $Z\df Z(\rho(\Gamma))$ and note that $Z$ is cyclic of order dividing $6$. This holds because $U|_Z$ is necessarily reducible. In particular $\rho(\Gamma)\neq Z$, so we may choose a \emph{minimal nontrivial normal subgroup} $M/Z \unlhd\rho(\Gamma)/Z$.

\emph{Case 1: $M$ is solvable}. We will show that this Case cannot occur. Otherwise,  $M/Z\cong (\ZZ/\ell\ZZ)^d$ for some prime $\ell$ and integer $d$. Now $U|_M$ is irreducible, and this forces $\ell = 3$, moreover the Sylow $3$-subgroup of $M$, call it $P$, satisfies $P \unlhd \rho(\Gamma)$. Indeed, $d=2$ and $P$ is an extra-special group $P\cong 3^{1+2}$. Because $P$ acts irreducibly on $U$ its centralizer consists of scalar matrices which therefore lie in $Z$. As a result, it follows that $\rho(\Gamma)/Z$ is isomorphic to a subgroup of the group of automorphisms of $P$ that acts trivially on $Z(P)$. This latter group is $(\ZZ/3\ZZ)^2 \rtimes \SL_2(3)$. Because $\rho(\Gamma)$ has a subgroup of index $2$ (Corollary  \ref{cordet1}) the only possibilities are that $\rho(\Gamma)/PZ$ is isomorphic to subgroup of $\ZZ/12\ZZ$, where we use the fact that the abelianization of $\Gamma$ is \emph{cyclic} to eliminate some possibilities. Indeed, this abelianization is $\ZZ/12\ZZ$, generated by the image of $T$, and furthermore $T^6\Gamma'= S^2\Gamma'$. It follows that in fact $\rho(\Gamma)/PZ$ is isomorphic to subgroup of  $\ZZ/6\ZZ$. But in all such cases, $M=PZ$ is \emph{not} a minimal normal subgroup. This completes the proof in Case 1.

\emph{Case 2: $M$ is nonsolvable}. Here, the only quasisimple groups with a $3$-dimensional faithful projective representation are $L_2(5)$, $L_2(7)$, $3.L_2(9)$, and the latter group is excluded thanks to Lemma \ref{thmprimcong}(b). We deduce that $M\cong L_2(p)\times Z$ with $p= 5$ or $7$. Furthermore $\Aut(L_2(p))= \PGL_2(p)$ does \emph{not} have a $3$-dimensional faithful representation, so $\Gamma=M$. Let $Z \cong Z/r\ZZ$ with $r \mid 6$. Because $\rho(\Gamma)$ has a subgroup of index $2$, then $2 \mid r$, so that  $r=2$ or $6$. 

Finally, use Lemma \ref{thmprimcong}(a) and the fact that $\Gamma'\langle S^2\rangle$ is a congruence subgroup of level $6$ to see that $\ker\rho$ is also a congruence subgroup, of level $pr$. This completes the proof of the Proposition.
\end{proof}

With this result in hand we turn to a description of the possible sets of eigenvalues for $\rho(T)$. Because $T$ generates the abelianization of $\Gamma$ and the level of $\ker\rho$ is $pr$, there is a generator $z$ of $Z \cong \ZZ/r\ZZ$ and an element $x \in L_2(p)$ of order $p$ such that $\rho(T) = xz$. Noting that $L_2(p)$ has a pair of conjugate irreducible representations of dimension $3$, it follows that $\rho$ falls into one of just $12$ equivalence classes and similarly there $12$ possible sets of eigenvalues for $\rho(T)$. Thus if $p=5$ then the  eigenvalues for $\rho(T)$ are of the form $\{\mu, \mu\lambda, \mu\bar{\lambda} \}$ where $\lambda$ and $\mu$ are primitive $5^{\textrm{th}}$ and $r^{\textrm{th}}$ roots of unity, respectively. Similarly, if $p=7$ the eigenvalues for $\rho(T)$ are of the form $\{\mu\lambda, \mu\lambda^2, \mu\lambda^4 \}$ where $\lambda$ and $\mu$ are primitive $7^{\textrm{th}}$ and $r^{\textrm{th}}$ roots of unity, respectively. Hence the possible exponents $\pmod{\ZZ}$ are as follows:
\begin{eqnarray}
  \label{prtriples}
&&(p, r){=}(5, 2).\  \{ \tfrac{1}{2}, \tfrac{3}{10},  \tfrac{7}{10} \},  \{ \tfrac{1}{2}, \tfrac{1}{10}, \tfrac{9}{10}  \}\notag\\
&&(p, r){=}(5, 6).\ \{\tfrac{1}{6}, \tfrac{11}{30}, \tfrac{29}{30}\}, \{ \tfrac{1}{6}, \tfrac{17}{30}, \tfrac{23}{30}\} ,\{\tfrac{5}{6}, \tfrac{1}{30}, \tfrac{19}{30} \}, \{\tfrac{5}{6}, \tfrac{7}{30}, \tfrac{13}{30}\}  \notag       \\
&&(p, r){=}(7, 2).\  \{\tfrac{1}{14} , \tfrac{9}{14}, \tfrac{11}{14} \},  \{ \tfrac{3}{14}, \tfrac{5}{14},  \tfrac{13}{14} \}\\
&&(p, r){=}(7, 6).\    \{\tfrac{13}{42} , \tfrac{19}{42}, \tfrac{31}{42} \},  \{ \tfrac{25}{42}, \tfrac{37}{42},  \tfrac{1}{42} \}, \notag
  \{\tfrac{41}{42} , \tfrac{5}{42}, \tfrac{17}{42} \},  \{ \tfrac{11}{42}, \tfrac{23}{42},  \tfrac{29}{42} \}.
\end{eqnarray}

Finally we summarize these computations in the following:
\begin{prop}
  \label{primdenoms}
  If $\rho$ is a primitive representation then one of the following holds:
  \begin{enumerate}
  \item  If $p=5$, then the pairs of rational numbers $\{h_1, h_2\} \pmod{\ZZ}$ take \emph{all} possible  values $\{\tfrac{u}{5}, \tfrac{v}{5}\}$ with $1{\leq}u{<}v{\leq}4$.
\item If $p=7$, then the pairs of rational numbers $\{h_1, h_2\} \pmod{\ZZ}$ takes each of the $6$ values $\{\tfrac{1}{7}, \tfrac{3}{7}\}, \{\tfrac{1}{7}, \tfrac{5}{7}\}, \{\tfrac{2}{7}, \tfrac{3}{7}\}, \{\tfrac{2}{7}, \tfrac{6}{7}\}, \{\tfrac{4}{7}, \tfrac{5}{7}\},  \{\tfrac{4}{7}, \tfrac{6}{7}\}$
exactly $3$ times, and the other $9$ values are \emph{omitted}.
\end{enumerate}
\end{prop}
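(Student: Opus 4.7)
The plan is to derive the classification by finite enumeration, starting from the $12$ exponent triples tabulated in (\ref{prtriples}). First I would recall from Subsection \ref{SSchV} that if $\{r_0, r_1, r_2\}$ are the local exponents of $\rho(T)$ modulo $\ZZ$ with $r_0 \equiv -c/24$ and $r_i \equiv h_i - c/24 \pmod{\ZZ}$ for $i = 1, 2$, then the unordered pair $\{h_1, h_2\} \pmod{\ZZ}$ is recovered by choosing one element of the triple to play the role of $r_0$ (corresponding to the vacuum $V$) and subtracting it from the other two. Since it is not known \emph{a priori} which of the three exponents corresponds to $V$, one must examine all three choices of $r_0$ for each triple, producing three pairs per triple and $18$ pairs-with-multiplicity in each of the cases $p=5$ and $p=7$.

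For part (1), I would tabulate the $6$ pairs obtained from the two triples at $(p,r) = (5,2)$ and the $12$ pairs obtained from the four triples at $(p,r) = (5,6)$. A direct check shows that each of the $\binom{4}{2} = 6$ subsets $\{u/5, v/5\}$ with $1 \leq u < v \leq 4$ arises exactly three times (once from the $r=2$ triples and twice from the $r=6$ triples), which establishes part (1).

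For part (2), the computation organizes itself cleanly once one notes that the set of differences of exponents within a triple is unaffected by a common shift by the $r$th root of unity $\mu$, so it depends only on whether the underlying orbit of $\lambda$-exponents is the set of quadratic residues $\{1,2,4\}$ or the non-residues $\{3,5,6\}$ in $\FF_7^\times$. A short direct computation shows that the QR orbit $\{1/7, 2/7, 4/7\}$ always yields the three pairs $\{\tfrac{1}{7},\tfrac{3}{7}\}$, $\{\tfrac{2}{7},\tfrac{6}{7}\}$, $\{\tfrac{4}{7},\tfrac{5}{7}\}$, while the non-QR orbit $\{3/7, 5/7, 6/7\}$ always yields $\{\tfrac{2}{7},\tfrac{3}{7}\}$, $\{\tfrac{1}{7},\tfrac{5}{7}\}$, $\{\tfrac{4}{7},\tfrac{6}{7}\}$. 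Inspecting (\ref{prtriples}) shows that of the $6$ triples for $p=7$, three are of QR type and three are of non-QR type, whence each of those $6$ pairs arises exactly three times and the remaining $9$ of the $\binom{6}{2}=15$ possible pairs do not arise.

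The whole argument is essentially a finite bookkeeping exercise, so the main ``obstacle'' is organizing the enumeration cleanly without arithmetic slips when reducing differences modulo $\ZZ$. The only conceptual observation needed is the QR/non-QR dichotomy for $p=7$, which explains structurally why the count comes out to $6 \times 3 = 18$ pairs-with-multiplicity distributed evenly among only $6$ distinct pairs.
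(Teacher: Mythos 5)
Your proposal is correct and matches the paper's approach: the paper itself treats Proposition~\ref{primdenoms} as an immediate summary of the enumeration of exponent triples in~\eqref{prtriples}, and your argument is precisely that enumeration carried out explicitly, using $h_i \equiv r_i - r_0 \pmod{\ZZ}$ with each of the three exponents in turn taken as $r_0$. The one thing you add beyond what the paper records is the observation that for $p=7$ the resulting pair-sets depend only on whether the $\lambda$-exponent orbit is $\{1,2,4\}$ (quadratic residues) or $\{3,5,6\}$ (nonresidues) in $\FF_7^\times$, which cleanly explains why exactly six pairs occur, each with multiplicity three; this is a pleasant structural refinement of the paper's bare tabulation, though the underlying computation is the same. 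I verified the counts: for $p=5$ each of the six pairs $\{u/5,v/5\}$ arises once from the $r=2$ triples and twice from the $r=6$ triples, and for $p=7$ the six triples split evenly, three of QR type yielding $\{\tfrac17,\tfrac37\}$, $\{\tfrac27,\tfrac67\}$, $\{\tfrac47,\tfrac57\}$ and three of non-QR type yielding $\{\tfrac27,\tfrac37\}$, $\{\tfrac17,\tfrac57\}$, $\{\tfrac47,\tfrac67\}$, exactly as you claim.
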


\begin{rmk}
\label{r:denoms}
  In what follows, the critical points to observe in Propositions \ref{imprimdenoms} and \ref{primdenoms} are that the \emph{denominators} of $h_1$ and $h_2$ are divisors of $16$ in the imprimitive cases, and they are divisors of $5$ or $7$ in the primitive cases.
\end{rmk}

\section{The elliptic surface}
\label{s:elliptic}

Thanks to the results in Sections \ref{s:voas} and \ref{s:monodromy}, we are now prepared to tackle the arithmetic classification of possible character vectors $F(\tau)$ for strongly regular VOAs $V$ with exactly $3$-simple modules and irreducible monic monodromy. It will then remain to analyze which of the possible character vectors are in fact realized by a VOA.

The next step in our classification specializes equation \eqref{Hypergeosolns} to yield the following formula for the character vector $F(\tau)$ corresponding to a VOA $V$ with simple modules $V$, $M_1$ and $M_2$: we have $F(\tau) = (f_0,f_1,f_2)^T$ where
\begin{align*}
  f_0 &= j^{\frac{2x+2y+3}{6}}{}_3F_2\left(-\tfrac{2x+2y+3}{6},-\tfrac{2x+2y+1}{6},-\tfrac{2x+2y-1}{6};-x,-y;\tfrac{1728}{j}\right),\\
  f_1 &= A_1j^{\frac{2y-4x-3}{6}}{}_3F_2\left(\tfrac{4x-2y+3}{6},\tfrac{4x-2y+5}{6},\tfrac{4x-2y+7}{6};x+1,x-y;\tfrac{1728}{j}\right),\\
  f_2 &= A_2j^{\frac{2x-4y-3}{6}}{}_3F_2\left(\tfrac{4y-2x+3}{6},\tfrac{4y-2x+5}{6},\tfrac{4y-2x+7}{6};y+1,y-x;\tfrac{1728}{j}\right),
\end{align*}
and $c = 8(x+y)+12$, $h_1 = x+1$, $h_2 = y+1$, $A_1 = \dim (M_1)_{h_1}$, $A_2 = \dim(M_2)_{h_2}$.

While Section \ref{s:monodromy} showed that we need only consider certain rational values of $x$ and $y$ whose denominators divide $16$, $5$ or $7$, it is useful to observe that $F(\tau)$ is in fact an algebraic family of vector-valued modular forms varying with the parameters $x$ and $y$, in the sense that the Fourier coefficients of this family are rational functions in $x$ and $y$. If $F(\tau)$ corresponds to a VOA, then the coefficients must in fact be nonnegative integers. Since $A_1$ and $A_2$ are unknown positive integers, in this Section we focus on $f_0$. More precisely, if we write $f_0(q) = q^{-c/24}(1 + mq + O(q^2))$ as in Section \ref{s:voas}, then the hypergeometric expression for $f_0$ above shows that $m$, $x$ and $y$ satisfy an algebraic equation that defines an elliptic surface:
\begin{equation}
  \label{eq1}
0 = (4(x+y)+6)((4(x+y)+2)(4(x+y)-2)-62xy)+mxy.
\end{equation}
As a fibration over the $m$-line, a theorem of Siegel (Theorem 7.3.9 of \cite{BG}) tells us that all of the good fibers of this surface have finitely many rational solutions subject to our restrictions on the monodromy from Section \ref{s:monodromy}. It does not appear to be easy to classify all of the relevant rational solutions directly, and so ultimately our analysis will rely on properties of this elliptic surface, in addition to properties of vector-valued modular forms and generalized hypergeometric series. Nevertheless, we shall describe some facts on the geometry and arithmetic of this surface that were crucial in our initial studies on this classification problem, but which will otherwise not be used in the sequel.

Begin by homogenizing equation \eqref{eq1}: we are interested in the curve $E/\CC(m)$ defined by $F(x,y,z) = 0$ where
\[
  F(x,y,z) = (4(x+y)+6z)((4(x+y)+2z)(4(x+y)-2z)-62xy)+mxyz.
\]
Notice that $E$ meets the line at infinity defined by $z = 0$ in three distinct points:
\begin{align*}
  P_1 & = (1:-1:0), & P_2 &= \left(15+\sqrt{-31}:16:0\right), & P_3 &= \left(15-\sqrt{-31}:16:0\right).
\end{align*}
Taking $P_1\df \infty$ for the identity of the group, the inversion for the group law on $E$ is given by swapping $x$ and $y$. At the level of VOAs this corresponds to interchanging the nontrivial modules $M_1$ and $M_2$ for $V$. The group law of \eqref{eq1} itself has a more complicated expression in terms of $m$ that we will not write down explicitly.

Consider the change of coordinates:
{\footnotesize
\[
  (U:V:W) = (x:y:z)\left(\begin{matrix}
      -24(65m^2-24552m-353648)&-6912m(m-248)(m-496)&248\\
      -24(65m^2-24552m-353648)&6912m(m-248)(m-496)&248\\
      -3(m^3-732m^2+97712m-4243776)&0&372-m
    \end{matrix}
  \right).
\]}
This change of coordinates turns equation \eqref{eq1} into the Weierstrass form $H(U,V,W) = 0$ where
{\footnotesize
\begin{align*}
  &H(U, V, W)= -V^2W+U^3-27(m^3-844m^2+210992m+1049536)(m+124)UW^2\\
  &+54(m^6-1080m^5+353904m^4-78209280m^3+16393117440m^2+465661052928m+1484665229312)W^3.
\end{align*}}
The discriminant of this elliptic curve over $\CC(m)$ is
\[
\Delta = 2^{27}\cdot 3^{13}\cdot (m+4)m^2(m-248)^2(m-496)^2\left(m^2+\frac{123}{3}m+\frac{8464}{3}\right)
\]
and the $j$-invariant is
\[
  j = \frac{(m+124)^3(m^3-844m^2+210992m+1049536)^3}{2^{15}\cdot 3 \cdot m^2(m-248)^2(m-496)^2(m+4)\left(m^2+\frac{128}{3}m+\frac{8464}{3}\right)}.
\]

Setting $y=0$ in equation \eqref{eq1} yields three rational points
\begin{align*}
  Q_1 &= (1/2:0:1),\\
  Q_2 &= (-1/2:0:1),\\
  Q_3 &= (-3/2:0:1),
\end{align*}
such that $Q_1+Q_2+Q_3 = \infty$. One can show that these points have infinite order in the fiber $E_m$ of $E/\CC(m)$ for all rational values of $m$ except when $m = -32$, $-4$, $0$, $\tfrac{633}{3}$, $248$ and $496$. Thus, the rational fibers $E_m$ typically have Mordell-Weil rank at least $2$. This might sound surprising, as the average Mordell-Weil rank of a rational elliptic curve is expected to be $\tfrac 12$. But in fact, families such as \eqref{eq1} with large rank are not so uncommon -- see for example \cite{Elkies} for an interesting discussion of such matters.

We began our study of \eqref{eq1} directly via the fibration over $\CC(m)$. It turns out that fibering over $y$ is more useful for classifying the VOAs under discussion here: indeed, all but finitely many of the infinite number of VOAs identified in Theorem \ref{thmmain} correspond to $y = -1/2$. Nevertheless, we shall record here a result that allows the effective enumeration of solutions $(m,x,y)$ to \eqref{eq1} for fixed rational $m$ and rational $x$ and $y$ with bounded denominator that was crucial in our initial studies of equation \eqref{eq1}.

The idea is to first study the rational points of the quotient surface obtained by modding out \eqref{eq1} by the inverse for the elliptic curve group law. Since inversion is given by swapping $x$ and $y$ in Equation \eqref{eq1}, we are interested in the rational solutions to the equation
\begin{equation*}
  0 = 4(2u+3)(8u^2-2-31v)+mv.
\end{equation*}
Solving for $x$ and $y$ via $x+y = u$ and $xy = v$ yields solutions of \eqref{eq1} defined over a quadratic extension of $\QQ$. It will be convenient to work with the corresponding projectivized equation
\begin{equation}
\label{eq2}
  0 = 4(2u+3w)(8u^2-2w^2-31vw)+mvw^2.
\end{equation}

Equation \eqref{eq2} defines a one-parameter family of singular cubic curves that, generically, are connected (and there are a finite number of fibers equal to a conic times a line). The rational points in the smooth locus of a connected rational singular cubic can be parameterized by linear projection from a rational singularity. The point $P = (0:1:0)$ is a rational singular point of every fiber, and this is the point that we will project from. The general line meeting $P$ is given by the equation
\[
  au+bw = 0.
\]
First suppose that $b = 0$. This means we wish to describe the solutions to \eqref{eq2} with $u=0$. These are the point $P$, along with the points
\[
 \left(0:\frac{24}{m-372}:1\right)
\]
with $m \neq 372$.

Henceforth we may assume that $b$ and $u$ are nonzero. After reparameterizing our line, we may assume $w = au$. Substituting this into equation \eqref{eq2} and using $u \neq 0$ yields
\begin{equation*}
  a(-ma+372a+248)v = -8(a-2)(a+2)(3a+2)u. 
\end{equation*}
If $a = 0$ then this equation forces $u = 0$, and we have already classified such points. We are thus now free to assume $a \neq 0$. If $-ma+372a+248 = 0$ then we must have $a = 2, -2$ or $a = -2/3$. This implies that away from the fibers for $m  = 0$, $248$ and $496$, we may assume $-ma+372a+248 \neq 0$. Therefore, away from these values of $m$ we can solve for $v$ above to obtain the family of points
\[
  \left(u:\frac{8(2u-1)(2u+1)(2u+3)}{(372-m+248u)}:1\right).
\]
Notice that if we set $u = 0$ we recover the preceding family of points.

It remains to consider whether the fibers have other rational singularities besides $P$ (as those points can't be accessed via projection), and to consider the fibers above $m = 0, 248$ and $496$.

First we treat the singularities. The $v$-partial derivative of \eqref{eq2} yields
\[
  w(-wm+248u+372w) = 0.
\]
Thus, singular solutions in a fiber of \eqref{eq2} must satisfy either $w = 0$ or $u = \frac{m-372}{248}w$. When $w = 0$ we find, by consideration of the $v$-partial, that the only possible additional rational singularity is $\left(1:\frac{12}{31}:0\right)$. The $u$-partial does not vanish at this point, and hence this is not in fact a singularity of the fibers. The other case is when $w \neq 0$ and
\[
  Q = \left(\frac{m-372}{248}:v:1\right).
\]
Substituting this into \eqref{eq2} yields $m = 0, 248$ or $496$. When $m = 0$ we obtain the unique additional singularity $(-3/2:16/31:1)$, when $m = 248$ we obtain the unique additional singularity $(-1/2:-8/31:1)$, and when $m = 496$ we obtain the unique additional singularity $(1/2:16/31:1)$. These are all the missing singularities, and all the missing points on the fibers corresponding to $m = 0, 248$ and $496$. Thus, we have described all rational solutions to \eqref{eq2}. We have nearly proven the following:
\begin{prop}
  \label{p:ratpoints}
  Suppose that $(m,x,y)$ is a rational solution to \eqref{eq1}. Then if $u =x+y$ and $v = xy$, the rational point $(u,v,m)$ is equal to 
  \[\left(u,\frac{8(2u-1)(2u+1)(2u+3)}{(372-m+248u)},m\right)\]
and $u \neq \frac{m-372}{248}$.
\end{prop}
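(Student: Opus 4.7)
The plan is to deduce the proposition directly from the rational-point classification of the quotient surface \eqref{eq2} carried out in the paragraphs immediately preceding the statement. Any rational solution $(m,x,y)$ of \eqref{eq1} produces a rational point $(u,v,m)$ on the affine chart $w=1$ of \eqref{eq2} via the symmetric functions $u=x+y$ and $v=xy$, so it suffices to show that every such point has the claimed form, together with the excluded-denominator assertion.

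The workhorse is the projection from the singular point $P=(0:1:0)$ already introduced in the text. On each fiber of \eqref{eq2} over the $m$-line, $P$ is a rational singularity, so a general line through $P$ meets the cubic fiber in $P$ (with multiplicity two) together with a single residual rational point, giving a birational parameterization of the smooth locus. Writing such a line as $au+bw=0$, the degenerate case $b=0$ (the line $u=0$) produces the isolated point $(0,\,24/(m{-}372),\,1)$, which is exactly the value of the claimed formula at $u=0$. In the remaining case $w=au$ with $u\neq 0$, substitution into \eqref{eq2} yields the linear-in-$v$ relation $a(-ma+372a+248)v=-8(a-2)(a+2)(3a+2)u$ already derived above. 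Whenever the coefficient of $v$ is nonzero one solves uniquely for $v$, and the change of variable $a=1/u$ recovers the expression in the statement after clearing the common factor $u^{-1}$; the condition $u\neq (m-372)/248$ is precisely the requirement that the resulting denominator $372-m+248u$ be nonzero.

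The main obstacle is the exceptional locus where $u=(m-372)/248$. Substituting this back into \eqref{eq2} forces $m(8u^2-2)=0$, pinning down the three reducible fibers $m\in\{0,248,496\}$, with $u=-3/2,-1/2,1/2$ respectively. On each such fiber \eqref{eq2} splits into the excluded line plus a conic, and the partial-derivative computation recalled in the text identifies the unique additional rational singularity in each case, namely $(-3/2,16/31)$, $(-1/2,-8/31)$ and $(1/2,16/31)$. I would close the proof by working through these three fibers case by case, verifying that every rational point of \eqref{eq2} outside the parameterization either coincides with one of the three listed singularities or lies on the degenerate line $u=(m-372)/248$, and in particular that the points which genuinely lift to rational $(x,y)$ through the symmetric-function map are exactly those enumerated by the formula. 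The substantive check is the denominator-vanishing analysis, since its control is what legitimises the clean exclusion appearing in the statement.
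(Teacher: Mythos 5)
Your strategy is the same as the paper's: project from the rational singular point $P=(0:1:0)$, derive the formula for $v$ away from the degenerate locus, and handle the three reducible fibers $m\in\{0,248,496\}$ separately; your constraint $m(8u^2-2)=0$ is an equivalent form of $4(2u+3)(8u^2-2)=0$ after the substitution $2u+3=m/124$ forced by $u=(m-372)/248$. The problem is that the proof is never actually closed. The paper's entire argument reduces to one concrete verification: the three extra singular points $(-3/2,16/31,0)$, $(-1/2,-8/31,248)$, $(1/2,16/31,496)$ cannot arise from rational $(x,y)$, because the discriminant $u^2-4v$ of $T^2-uT+v$ at these points equals $23/124$, $159/124$, $-225/124$ respectively, and none of these is a rational square (the first two have square-free part $23\cdot 31$ and $3\cdot 53\cdot 31$, the third is negative). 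You identify the three points but then write only "I would close the proof by working through these three fibers case by case," without carrying out this check, which is the substantive content of the Proposition's proof.

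There is also a wrinkle your proposed exhaustive verification would run into. You intend to check "that the points which genuinely lift to rational $(x,y)$\dots are exactly those enumerated by the formula," but on each of the three reducible fibers the line component carries infinitely many rational points, some of which do lift. For example $(m,x,y)=(496,1/2,0)$ solves \eqref{eq1}, has $u=1/2=(m-372)/248$, and is therefore excluded by the Proposition's conclusion even though it is a bona fide rational solution. The published proof also glosses over these (such solutions have $x=0$, $y=0$, or $x=y$ and are discarded later on monodromy grounds), but if you actually carried out your plan you would find the statement, and the verification you describe, fail as literally written; the degenerate lifts need to be named and excluded explicitly.
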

\begin{proof}
We have seen that the only other possible rational solutions $(m,x,y)$ correspond to $(u,v,m)$ equal to one of the singular points $(-3/2,16/31,0)$, $(-1/2,-8/31,248)$ or $(1/2,16/31,496)$. But none of these correspond to \emph{rational} values of $x$ and $y$.
\end{proof}

\begin{thm}
  \label{t:linearbound}
Let $N > 0$ be an integer and let $m$ be a rational number. Then the number of solutions $a_m(N)$ to equation \eqref{eq1} with rational $x$, $y$ of denominator dividing $N$ satisfies
\[
  a_m(N) \leq 2+N\max\left(\frac{16\abs{m-372}}{31},6148\right).
\]
\end{thm}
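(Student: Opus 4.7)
The plan is to reduce the count to rational values of $u = x+y$ and show that $u$ must lie in a bounded interval whose length is controlled by $B := \max(16|m-372|/31, 6148)$.

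Step 1 (Parameterization). By Proposition~\ref{p:ratpoints}, every rational solution $(x,y)$ of~\eqref{eq1} satisfies
\[v = xy = \frac{8(2u-1)(2u+1)(2u+3)}{248u+372-m},\quad u = x+y \neq u_0 := \tfrac{m-372}{248}.\]
If $x,y \in \tfrac{1}{N}\ZZ$ then $u \in \tfrac{1}{N}\ZZ$. So a solution is determined by $u$ together with a choice of ordering of the two roots of $T^2 - uT + v = 0$.

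Step 2 (Discriminant condition). Since $x,y$ are real, we need $(x-y)^2 = u^2 - 4v \geq 0$. Substituting the formula from Step 1 gives
\[u^2 - 4v \;=\; \frac{-\,g(u)}{248(u-u_0)},\qquad g(u) := 8u^3 + (12+m)u^2 - 64u - 96.\]
Hence the reality condition is equivalent to $g(u)(u-u_0) \leq 0$.

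Step 3 (Bound on $|u|$). Set $M := B/4 = \max(4|m-372|/31,\,1537)$. The claim is that every $u$ satisfying Step 2 lies in $[-M,M]$. Any such $u$ is bounded by the extreme real roots of $g$ together with $u_0$. Since $|u_0| = |m-372|/248$ is negligible compared to $M$, the issue is bounding the real roots of $g$. Rewriting $g(r)=0$ in the form
\[r \;=\; \alpha + \frac{8}{r} + \frac{12}{r^2},\qquad \alpha := -\tfrac{12+m}{8},\]
and iterating yields $|r - \alpha| \le 8/|r| + 12/r^2$, so the real roots of $g$ cluster near $\alpha$, with corrections of order $O(1/|m|)$ once $|m|$ is moderately large. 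A direct case analysis comparing $|m+12|/8$ with the two terms in $M$ —using the clean identity $\alpha - u_0 = -4m/31$, which is the source of the constant $16/31$— then shows $|u| \le M$: for $|m-372|$ large, one uses $32|m-372| \ge 31|m+12|$ (up to bounded error); for $|m-372|$ moderate, the universal bound $M \ge 1537$ suffices.

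Step 4 (Counting). The set $\tfrac{1}{N}\ZZ \cap [-M,M]$ contains at most $2MN + 1$ elements, and each value of $u$ yields at most two ordered pairs $(x,y)$ as the roots of $T^2 - uT + v = 0$. Therefore
\[a_m(N) \;\le\; 2(2MN+1) \;=\; 4MN+2 \;=\; 2 + NB,\]
since $4M = B$ by construction.

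The hard step is Step 3, specifically in the transition regime around $|m-372| \approx 11912$ where the two terms defining $M$ coincide and the smallest real root of $g$ lies extremely close to $-M$. There the coarse Cauchy bound $|r|\le 1 + |m+12|/8$ is off by roughly $1$, which would violate $|u|\le M$, and the sharper fixed-point estimate $|r-\alpha| \le 8/|r|$ is needed to close the gap.
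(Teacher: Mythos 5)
Your proposal replaces the paper's parameter-optimization argument by a direct analysis of a cubic, which is a genuinely different (and conceptually cleaner) route. The paper rewrites the nonnegativity condition $u^2-4v\geq 0$ as
\[
1\;\geq\;\tfrac{32}{31}\,\frac{\bigl(1-\tfrac{1}{4u^2}\bigr)\bigl(1+\tfrac{3}{2u}\bigr)}{1+\tfrac{372-m}{248u}},
\]
introduces an auxiliary parameter $A$ with $A=32$ and a splitting $e_1+e_2=1$ with $e_1=1/5000$, and then optimizes to extract the constant $1537$. Your Step~2 instead packages the same discriminant condition as $g(u)(u-u_0)\leq 0$ with $g(u)=8u^3+(m+12)u^2-64u-96$, so the whole problem reduces to bounding the real roots of $g$. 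Steps~1, 2 and 4 are correct, and I checked that the claim of Step~3 (every real root of $g$ has $|r|\leq M$ with $M=\max\{\tfrac{4|m-372|}{31},1537\}$) is in fact true: from $r=\alpha+\tfrac{8}{r}+\tfrac{12}{r^2}$ with $\alpha=-\tfrac{m+12}{8}$, any root with $|r|>1537$ satisfies $|r-\alpha|<\tfrac{8}{1537}+\tfrac{12}{1537^2}<0.0053$, forcing $|m+12|>12295.9$; on the other hand $|r|>M\geq\tfrac{4|m-372|}{31}$ combined with $|r|<|\alpha|+0.0053$ forces (for $m>372$) $m<12277.3$, a contradiction, and symmetrically for $m<-12$. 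So the route works and gives the same bound as the paper.

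However, as written your proof has a genuine gap precisely at Step~3, and you say so yourself: you assert that ``a direct case analysis \ldots then shows $|u|\le M$'' but never carry it out, and you acknowledge that the coarse Cauchy bound fails by about~$1$ in the transition regime $m\approx 12280$. The margin there is razor-thin — the large root sits at $|\alpha|+O(10^{-2})$ while $M$ exceeds $|\alpha|$ by less than $0.04$ — so the estimate cannot be waved through; the explicit $0.0053$ correction and the case split on the sign of $m-372$ I indicated above (or some equivalent) are required, and until they appear the argument is not complete. Two smaller inaccuracies: the phrase ``the real roots of $g$ cluster near $\alpha$'' is only true for the one large root (the small roots, when real, cluster near $0$ — harmless for your purpose, but the statement as given is false); and the identity $\alpha-u_0=-\tfrac{4m}{31}$, while correct, does not actually enter the estimate, so calling it ``the source of the constant $16/31$'' is misleading — that constant comes, as in the paper, from $\tfrac{32\cdot 8}{248}=\tfrac{32}{31}$, i.e.\ from the coefficients of the elliptic surface.
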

\begin{proof}
  Let $(m,x,y)$ be a rational solution to equation \eqref{eq1}, and let $(u,v,m)$ be the corresponding solution to \eqref{eq2} with $u = x+y$, $v = xy$. Then $(u,v,m)$ is equal to one of the points in Proposition \ref{p:ratpoints}. Since the polynomial $T^2-uT+v$ has rational roots by hypothesis, it follows that the discriminant
\[
u^2-4v = u^2-\frac{32(2u-1)(2u+1)(2u+3)}{(372-m+248u)}
\]
must be a rational square. In particular,
\[
  1 \geq \frac{32}{31}\frac{(1-\frac 1{(2u)^2})(1+\frac{3}{2u})}{(1+\frac{372-m}{248u})}.
\]
As $\abs{u}$ grows, the right hand side converges to $32/31$, so that in fact, there are only finitely many solutions in each fiber. We knew this already by a result of Siegel, but we can now use the parameterization to obtain precise bounds.

  First assume that $\abs{(372-m)/248u} < 1/A$ for some big constant $A$ that we will specify later. Then for $A > 31$ we find
\[
1 > \frac{31(A+1)}{32A} \geq \left(1-\frac 1{4u^2}\right)\left(1+\frac{3}{2u}\right),
\]
and this will produce contradictions for large $\abs{u}$. Choose numbers $e_1,e_2 \in (0,1)$ with $e_1+e_2 = 1$. We will find explicit bounds on $u$ that ensure
\begin{align*}
(1-(2u)^{-2}) &> (31(A+1)/32A)^{e_1},\\
(1+3/(2u)) &> (31(A+1)/32A)^{e_2}.
\end{align*}
The first bound is equivalent with
\[
1 - \left(\frac{31(A+1)}{32A}\right)^{e_1} > \frac{1}{(2u)^2}
\]
which is equivalent with
\[
\abs{u} > \frac{1}{2}\left(1 - \left(\frac{31(A+1)}{32A}\right)^{e_1}\right)^{-1/2}
\]
The second bound is equivalent with
\[
1-\left(\frac{31(A+1)}{32A}\right)^{e_2} > -\frac{3}{2u}
\]
This is always true if $u > 0$ by choice of $A$ and $e_2$, since the left side is positive, so that the second bound will hold whenever
\[
\abs{u} > \frac{3}{2}\left(1 - \left(\frac{31(A+1)}{32A}\right)^{e_2}\right)^{-1}
\]
Thus, if $\abs{u}$ is bigger than the max of these, we have a contradiction. Therefore, we must have
\[
\abs{u} \leq \max\left(\frac{A\abs{m-372}}{248},\frac{1}{2}\left(1 - \left(\frac{31(A+1)}{32A}\right)^{e_1}\right)^{-1/2}, \frac{3}{2}\left(1 - \left(\frac{31(A+1)}{32A}\right)^{e_2}\right)^{-1} \right).
\]

Now to optimize parameters. First off, our choice of $A$ must ensure that $1 > \frac{31(A+1)}{32A}$, and we'd like it to be as small as possible. A natural choice is $A = 32$, but any $A$ satisfying $31 < A \leq 32$ would work. To be definite take $A = 32$, so that
\[
\abs{u} \leq \max\left(\frac{4\abs{m-372}}{31},\frac{1}{2}\left(1 - \left(\frac{1023}{1024}\right)^{e_1}\right)^{-1/2}, \frac{3}{2}\left(1 - \left(\frac{1023}{1024}\right)^{e_2}\right)^{-1} \right)
\]
Next we would like to optimize the choice of $e_1$ and $e_2$ so that this maximum is minimized. Computations show that the minimum of the last two values above is achieved for $e_1$ somewhere between $1/5000$ and $1/10000$. For example, using $e_1 = 1/5000$ we obtain
\[
\abs{u} \leq \max\left(\frac{4\abs{m-372}}{31},1537\right).
\]

We are only interested in the values of $u$ of the form $u = i/N$ in this range, and there are at most $2N\max\left(\frac{4\abs{m-372}}{31},1537\right)+1$ of these. For each such choice, we have at most two rational solutions $(m,x,y)$ and $(m,y,x)$ to equation \eqref{eq1}. This concludes the proof.
\end{proof}

\begin{rmk}
  In the proof above, many values of $u$ correspond to points for which the discriminant
\[
u^2-\frac{32(2u-1)(2u+1)(2u+3)}{(372-m+248t)} \geq 0
\]
is not a rational square. In such cases the corresponding pair of points $(m,x,y)$ and $(m,y,x)$ satisfying equation \eqref{eq1} have $x$ and $y$ values contained in a real quadratic extension of $\QQ$. Thus, it seems possible that the linear bound on $a_m(N)$ above could be improved by making stronger use of the discriminant condition.
\end{rmk}

\begin{rmk}
  For fixed values of $m$, the preceding proof yields an explicit and efficient algorithm for enumerating all rational solutions to equation \eqref{eq1} satisfying the divisibility conditions of Theorem \ref{t:linearbound}. The steps are as follows:
  \begin{enumerate}
\item Fix a rational value of $m$.
\item List the finite number of values $u = i/N$ satisfying the inequality
  \[
\abs{u} \leq \max\left(\frac{4\abs{m-372}}{31},1537\right).
\]
\item For each value of $u$ from the previous step, test whether the discriminant
  \[
  D(u,m) = u^2-\frac{32(2u-1)(2u+1)(2u+3)}{(372-m+248u)}
\]
is a rational square.
\item If $D(u,m)$ is a rational square, then set $x = (u+\sqrt{D})/2$ and $y = (u-\sqrt{D})/2$. This contributes solutions $(m,x,y)$ and $(m,y,x)$ to equation \eqref{eq1} (note that it's possible to have $x = y$).
  \end{enumerate}
  We have run this algorithm for $m = 0$ through $m = 20,000$, and one finds that it is most common to have $a_m(16) = 8$ and $a_m(5) = a_m(7) = 0$ in that range. Note that $a_m(16) \geq 8$ for all $m$ due to the existence of the points $\pm Q_1$, $\pm Q_2, \pm Q_3$ on the elliptic curve over $\CC(m)$ defined by \eqref{eq1}, as well as the points $\pm Q_4$ where
  \[
  Q_4 = Q_1 - Q_2 =  \left(-\frac{m}{16}-1:-\frac{m}{16}-\frac{1}{2}:1\right).
\]
Notice that the existence of this family of points shows that the bound on $\abs{u}$ used in the proof of Theorem \ref{t:linearbound} is essentially optimal, since this family of points corresponds to $u = -\frac{1}{8}m-\frac{3}{2}$.
\end{rmk}

In general, for each $m$ Equation \eqref{eq1} has many rational solutions that do not correspond to VOAs. To aid us in eliminating many of these solutions we shall next analyze all three coordinates of the corresponding (in general hypothetical) characters corresponding to a solution of Equation \eqref{eq1}.

\section{Positivity restrictions}
\label{s:positivity}
Let $(m,x,y)$ denote a solution to Equation \eqref{eq1} that corresponds to a VOA as in Theorem \ref{thmmain}, and let $F(\tau)$ be the corresponding character vector. In this Section we exploit the fact that the Fourier coefficients of $F(\tau)$ must be nonnegative. Since these coefficients are reducible rational functions, we can gain some traction by studying their divisors, as the sign of the coefficient is constant in the connected components of the complement of the divisor.
\begin{thm}
  \label{t:positivity}
 If $(m,x,y)$ denotes a solution to Equation \eqref{eq1} realized by a VOA satisfying the restrictions of Theorem \ref{thmmain}, and if $\abs{x+1}> 5/2$ or $\abs{y+1} >5/2$, then exactly one of the following holds:
\begin{enumerate}
\item $\abs{x-y} \leq 1$;
\item $-2 \leq y \leq 0$;
\item $-2 \leq x \leq 0$.  
\end{enumerate}
\end{thm}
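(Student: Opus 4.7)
The plan is to exploit positivity of the first non-trivial Fourier coefficients of the three characters $f_0, f_1, f_2$, each of which is an explicit rational function of $(x, y)$ via the hypergeometric representation \eqref{Hypergeosolns}. Writing
\[
f_0 = q^{-c/24}\bigl(1 + m q + O(q^2)\bigr), \qquad f_i = A_i q^{h_i - c/24}\bigl(1 + B_i q + O(q^2)\bigr) \quad (i = 1, 2),
\]
where $A_i = \dim(M_i)_{h_i} \geq 1$, while $m = \dim V_1$ and $A_i B_i = \dim(M_i)_{h_i+1}$ are non-negative integers, we see that $m, B_1, B_2 \geq 0$ is necessary. Expanding $j^s = q^{-s}\bigl(1 + 744 s\, q + O(q^2)\bigr)$ and $1728/j = 1728 q + O(q^2)$ in the hypergeometric formulas, the expression for $m$ recovers Equation \eqref{eq1}, while
\[
B_1 = \frac{124(2y-4x-3)(x+1)(x-y) + 8(4x-2y+3)(4x-2y+5)(4x-2y+7)}{(x+1)(x-y)},
\]
and $B_2$ is obtained from $B_1$ by swapping $x$ and $y$.

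Each of the three rational functions $m, B_1, B_2$ has numerator and denominator factoring into a small collection of lines and conics in the $(x,y)$-plane, and the signs of these functions are constant on each connected component of the complement of the union of their zero loci. I would tabulate the finitely many components, evaluate the three signs at one test point per region, and isolate those on which all three of $m, B_1, B_2$ are simultaneously non-negative. Intersecting this positive locus with the exterior of the box $\max(|x+1|, |y+1|) \leq 5/2$ and verifying that it lies in the union of the three strips $|x-y| \leq 1$, $-2 \leq y \leq 0$, $-2 \leq x \leq 0$ completes the proof; this inspection is precisely what is displayed in Figure \ref{f:positiveregion}.

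The main obstacle is the combinatorial sign analysis across all regions, but the factored forms of the numerators together with the $x \leftrightarrow y$ symmetry (exchanging the roles of $M_1$ and $M_2$) keep it manageable, and the analysis can be mechanized with computer algebra. A subtlety is that $B_1$ or $B_2$ could vanish identically when the weight-$(h_i+1)$ space of $M_i$ happens to be trivial, in which case one must pass to the next Fourier coefficient; however, the locus where the leading coefficient vanishes consists of finitely many curves that can be handled separately and that already lie inside the box or within the three strips exhibited in the conclusion.
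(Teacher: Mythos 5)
Your overall plan is the same as the paper's: extract the first nontrivial Fourier coefficients $m$, $F_1$, $F_2$ of the three hypergeometric series, observe they are rational functions of $(x,y)$ whose signs are locally constant off their divisors, and then inspect the finitely many regions to show that simultaneous non-negativity outside the box $\max(|x+1|,|y+1|)\leq 5/2$ forces membership in one of the three strips. That is exactly how the paper proves Theorem \ref{t:positivity}. Your reduction of the constraint to $m, B_1, B_2 \geq 0$ (using $A_i\geq 1$) is also correct.

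However, the explicit formula you wrote for $B_1$ is wrong, and the error is consequential for the sign analysis. You took the lower hypergeometric parameters of $f_1$ to be $x+1$ and $x-y$, following the display in Sections \ref{s:elliptic} and \ref{s:sieve} of the paper, but that display contains a typo. Working from \eqref{Hypergeosolns} with $\beta_1=1+\tfrac{c}{24}$, $\beta_2=1+\tfrac{c}{24}-h_1$, $\beta_3=1+\tfrac{c}{24}-h_2$, the lower parameters of the solution belonging to the exponent $1-\beta_2$ are $1+\beta_1-\beta_2 = 1+h_1 = x+2$ and $1+\beta_3-\beta_2 = 1+h_1-h_2 = x-y+1$, not $x+1$ and $x-y$. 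With your parameters one gets
\[
B_1 = \frac{4(4x-2y+3)\bigl(x^{2}-xy+8y^{2}+65x-17y+70\bigr)}{(x+1)(x-y)},
\]
whereas the correct value, which is what the paper actually uses in the proof, is
\[
F_1(x,y) = \frac{4(2y-4x-3)\bigl(x^{2}-xy+8y^{2}+3x+14y+8\bigr)}{(x+2)(y-x-1)}.
\]
These differ: for instance at $(x,y)=(1,0)$ your formula gives $B_1=1904$, while the correct value is $56$. More importantly, the divisor of your $B_1$ contains the lines $x=-1$ and $x=y$ rather than $x=-2$ and $x=y-1$, and its conic factor is different, so the regions and signs you would tabulate are not those in Figure \ref{f:positiveregion}. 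The plan is sound, but you must recompute $B_1$ and $B_2$ with the correct lower parameters $x+2$, $x-y+1$ (and $y+2$, $y-x+1$) before carrying out the sign inspection; as it stands the proposal would not reproduce the stated conclusion. Your closing remark about $B_i$ possibly vanishing is not really an issue: $B_i\geq 0$ is the only constraint you use, vanishing is allowed, and the vanishing locus is contained in the divisor that already bounds the regions.
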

\begin{proof}
  Begin by writing
  \[
\left(\begin{smallmatrix}
    f_0\\
    f_1\\
    f_2\\
  \end{smallmatrix}\right)=\diag\left(q^{-\frac{2x+2y+3}{6}},A_1q^{-\frac{2y-4x-3}{6}},A_2q^{-\frac{2x-4y-3}{6}}\right)\left(\begin{smallmatrix}
    1 +mq + O(q^2)\\
    1 + F_1q + O(q^2)\\
    1 + F_2q + O(q^2)
  \end{smallmatrix}
\right)
  \]
Equation \eqref{eq1} gives an explicit formula for $m$ in terms of $x$ and $y$. From the expressions for $f_1$ and $f_2$ in terms of generalized hypergeometric series, one finds similarly that
  \begin{align*}
    F_1(x,y) &=  \frac{4(2y-4x-3)(x^2-xy+8y^2+3x+14y+8)}{(x+2)(y-x-1)}
  \end{align*}
and $F_2(x,y) = F_1(y,x)$. Observe that the divisors of $F_1$ dissect the plane into a finite number of regions, and the sign of $F_1$ is constant in each region. Figure \ref{f:divisor} shows the divisors of each of $m$, $F_1$ and $F_2$.
\begin{figure}[H]
  \centering
  \subfloat[$m$]{\includegraphics[scale=0.4]{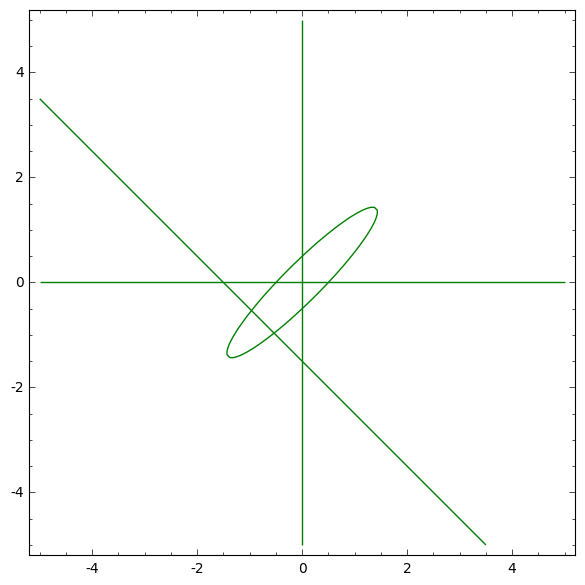}}
  \qquad
  \subfloat[$F_1$]{\includegraphics[scale=0.4]{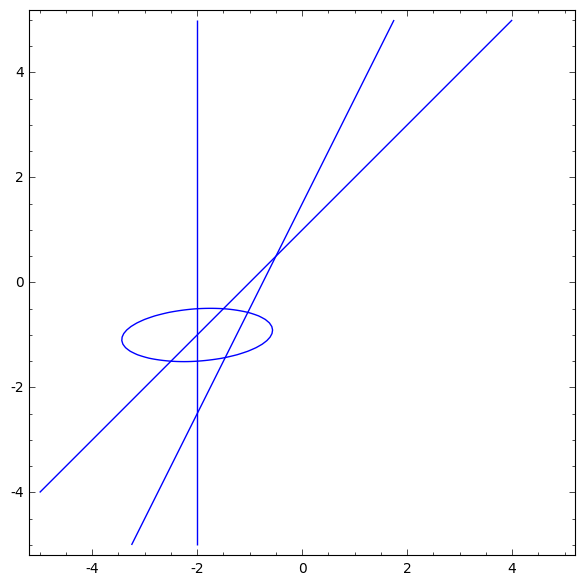}}
  \qquad
  \subfloat[$F_2$]{\includegraphics[scale=0.4]{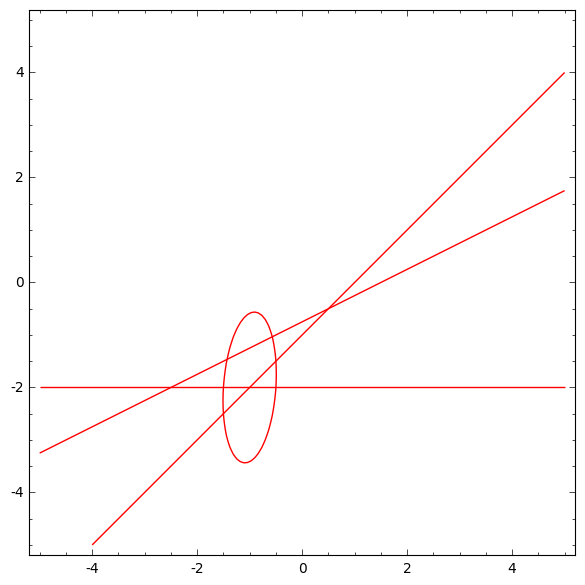}}
  \caption{The divisors of $m$, $F_1$ and $F_2$.}
  \label{f:divisor}
\end{figure}
Figure \ref{f:positiveregion} plots all three divisors. Outside of the boxed region enclosed by the dashed lines, the only regions where $m$, $F_1$ and $F_2$ are simultaneously positive are the shaded regions in Figure \ref{f:positiveregion}, and these regions correspond to the statement of the Theorem.
\begin{figure}
    \centering
  \includegraphics[scale=0.60]{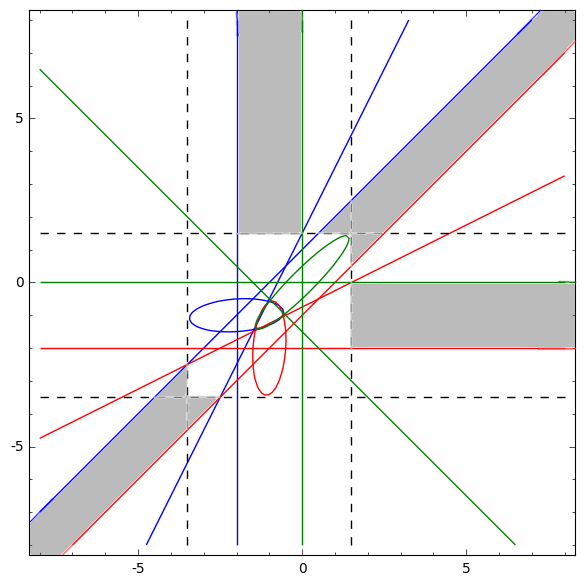}
    \caption{Regions corresponding to VOAs.}
    \label{f:positiveregion}
  \end{figure}
\end{proof}

\begin{rmk}
  Since $h_1 = x+1$ and $h_2 = y+1$, the following condition holds for a strongly regular VOA with exactly $3$ simple modules and whose character vector satisfies a monic MLDE of degree $3$ with irreducible monodromy: if $\abs{h_1} > 5/2$ or $\abs{h_2} > 5/2$, then one of the following holds:
  \begin{enumerate}
  \item $\abs{h_1-h_2} \leq 1$,
  \item $\abs{h_1} \leq 1$ or
  \item $\abs{h_2} \leq 1$.
  \end{enumerate}
This is a relatively simple consequence of the fact that the Fourier coefficients of $F(\tau)$ are rational functions of $h_1$ and $h_2$.
\end{rmk}

The region bounded by the dashed lines in Figure \ref{f:positiveregion} contains a finite number of points $(x,y)$ where $x$ and $y$ are rational numbers satisfying the restrictions of Section \ref{s:monodromy} (recall that that \ref{s:monodromy} showed that $x$ and $y$ are necessarily rational numbers with denominators that divide $5$, $7$ or $16$). It is thus a simple matter to enumerate them. Therefore, by symmetry we may now focus our attention on the shaded regions in Figure \ref{f:positiveregion} on page \pageref{f:positiveregion} below the diagonal $x = y$. The shaded regions contain a finite number of horizontal and diagonal slices of the elliptic surface defined by Equation \eqref{eq1} of relevance to our classification. These slices turn out to be singular cubic curves whose rational points are parameterized and studied in Section \ref{s:fibers} below. In the next section we exploit this geometry and the hypergeometric nature of $F$ to find all values of $x$ and $y$ where $f_0$ has positive integer coefficients, and where $f_1$ and $f_2$ have positive coefficients.
\begin{rmk}
  Due to the unknown scalars $A_j = \dim (M_j)_{h_j}$ for $j=1,2$, we cannot yet make use of the fact that $f_1$ and $f_2$ have integer coefficients.
\end{rmk}

\section{The remaining fibers}
\label{s:fibers}
\subsection{The horizontal fibers}
In this Section we regard Equation \eqref{eq1} as a fibration over $y$. In order to homogenize the equation, let $m$ be of degree $1$ and let $y$ be of degree $0$. Then the homogenized version of Equation \eqref{eq1} is
\begin{equation}
  \label{yfiber}
  0 = (4(x+yz)+6z)((4(x+yz)+2z)(4(x+yz)-2z)-62xyz)+mxyz
\end{equation}
and there is a (unique) singular point $(m:x:z) = (1:0:0)$ at infinity in every fiber. Therefore, the smooth locus of each fiber can be rationally parameterized by projection from $(1:0:0)$.

Before proceeding to this we shall classify all additional singular points in the affine patches with $z \neq 0$, as such points cannot be obtained by projection from $(1:0:0)$. First off, the vanishing of the $m$-partial of Equation \eqref{yfiber} implies that either $x = 0$ or $z = 0$ at a singularity. The vanishing of the partials at points with $x = 0$ corresponds to the polynomial equations
\begin{align*}
  0 &= z (56 y^{2} z -  m y + 180 y z + 16 z),\\
  0 &= (2 y - 1) (2 y + 1) (2 y + 3) z^{2}.
\end{align*}
It follows that if $y \neq \pm 1/2$ and $-3/2$, then the only singular point in the fiber is the point $(1:0:0)$ at infinity. Thus, the entire fiber of Equation \eqref{yfiber} can be described by projection from infinity, as long as $y \neq \pm 1/2$ and $-3/2$. In the exceptional fibers we find the following additional singular points, corresponding to a conic intersecting a line in two points:
\begin{align*}
  y=1/2: &\quad (240:0:1),\\
  y=-1/2: &\quad (120:0:1),\\
  y=-3/2: &\quad (256/3:0:1).
\end{align*}
Note that $y=1/2$ is outside of the shaded region, so there are in fact only two exceptional fibers that we must consider.

Thus, we now suppose that $-2 \leq y \leq 0$ with $y \neq -1/2, -3/2$, and we will treat these two exceptional fibers separately afterward. In order to rule out the existence of a VOA corresponding to all but (an explicitly computable) finite number of such solutions to Equation \eqref{eq1}, we will use the fact that the character of the hypothetical VOA
\[
  f_0 = j^{\frac{2x+2y+3}{6}}{}_3F_2\left(-\frac{2x+2y+3}{6},-\frac{2x+2y+1}{6},-\frac{2x+2y-1}{6};-x,-y;\frac{1728}{j}\right)
\]
must have nonnegative integers as coefficients.

Let $B_k$ denote the $k$th coefficient of the underlying hypergeometric series (without the $j$-factors taken into account) defining $f_0$. If we can show that some hypergeometric coefficient $B_k$ has a prime divisor in its denominator that does not divide the denominators of $x/6$ and $y/6$, then it will also appear in the denominator of the $k$th coefficient of $f_0$. Notice that since we are only interested in solutions $(x,y)$ to Equation \eqref{eq1} with denominators equal to $5$, $7$ or a divisor of $16$, by Section \ref{s:monodromy}, this means that only primes $p \leq 96$ could possibly divide some denominator of a coefficient $B_k$ but not divide any denominators in $f_0$. Thus, below we restrict to primes $p \geq 96$ and consider only the coefficients $B_k$, rather than the more complicated coefficients of $f_0$.

Recall from \cite{FGM} Theorem 3.4 that if $c_p(x,y)$ denotes the number of $p$-adic carries required to compute the $p$-adic addition $x+y$, and if $v_p$ denotes the $p$-adic valuation normalized so that $v_p(p) = 1$, then
\begin{align}
  \label{coeffval}
  v_p(B_k) = &c_p\left(-\tfrac{1}{3}(x+y)-\tfrac 32,k\right) + c_p\left(-\tfrac{1}{3}(x+y)-\tfrac 76,k\right)+c_p\left(-\tfrac{1}{3}(x+y)-\tfrac 56,k\right)\\
  &- c_p(-x-1,k)-c_p(-y-1,k).\notag
\end{align}
The key point here is that if there exists a prime $p \geq 96$ such that the zeroth $p$-adic digit of $-y-1$ is largest among the $5$ arguments above, say $-y-1 \equiv y_0 \pmod{p}$, then in \eqref{coeffval}, $c_p(-y-1,p-y_0) \geq 1$, while each other term $c_p(*,p-y_0)$ will be zero. Therefore, for such primes we have $v_p(A_{p-y_0}) \leq -1$ and hence $f_0$ is \emph{not} integral. The arithmetic difficulty that arises in our argument for the exceptional cases when $y = -1/2, -3/2$ is that $-y-1$ has zeroth $p$-adic digit asymptotic to $p/2$ for \emph{all} odd primes. Hence we shall treat those cases separately.

Suppose first that the denominators of $x$ and $y$ are both equal to $5$. We shall give all the details in this case and omit the details for the cases of the other possible denominators, as the arguments are identical save for adjusted constants. The exceptions are the fibers $y=-1/2$ and $y=-3/2$, which we shall also treat in detail. Note that since we are interested in irreducible monodromy representations, we may assume that $5x$ and $5y$ are both integral and relatively prime to $5$, and also $5x \not \equiv 5y \pmod{5}$. The key result in this case is the following:
\begin{prop}
  \label{p:5}
  Let $(m,x,y)$ be a solution to equation \eqref{eq1} with $\abs{y+1} < 1$, such that $5x$ and $5y$ are integers coprime to $5$, and such that $5x\not \equiv 5y\pmod{5}$. Then if $x > 18188$, the series $f_0$ does not have integral Fourier coefficients.
\end{prop}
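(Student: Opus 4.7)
Following the discussion preceding the Proposition, we aim to exhibit a prime $p > 96$ such that the zeroth $p$-adic digit of $-y-1$ is strictly greater than the zeroth digits of the remaining four hypergeometric parameters $-x-1$, $-\tfrac 13(x+y)-\tfrac 32$, $-\tfrac 13(x+y)-\tfrac 76$, $-\tfrac 13(x+y)-\tfrac 56$ modulo $p$. Writing $y_0$ for the zeroth digit of $-y-1$ and setting $k := p - y_0$, the formula \eqref{coeffval} then yields $v_p(B_k) \leq -1$, forcing non-integrality of $f_0$.

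Since $\abs{y+1}<1$ and $5y$ is an integer coprime to $5$, there are only eight possibilities for $y$; write $-y-1 = \alpha/5$ with $\alpha \in \{\pm 1,\pm 2,\pm 3,\pm 4\}$. Choosing $p$ in the residue class $\alpha\pmod 5$ yields $5^{-1}\alpha \equiv (4p+\alpha)/5 \pmod p$, so $y_0 = (4p+\alpha)/5 \geq (4p-4)/5$. For such a $p$, each of the four remaining zeroth digits is an affine-linear bijection of $a := 5x$ modulo $p$. Hence for each of the four competing parameters the ``good'' set $A_i \subseteq \ZZ/p$ (where the competing digit is strictly less than $y_0$) has cardinality $y_0 \geq (4p-4)/5$, and by Bonferroni's inequality,
\[
\abs{A_1 \cap A_2 \cap A_3 \cap A_4} \geq 4 y_0 - 3p \geq \frac{p-16}{5}.
\]
Thus, for any sufficiently large prime $p$ in the residue class $\alpha\pmod 5$, a positive proportion of residues $a\bmod p$ satisfies all four digit inequalities simultaneously.

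The main obstacle is to show that for every admissible $x > 18188$ there actually exists a prime $p > 96$, $p \equiv \alpha \pmod 5$, such that $a = 5x$ lies modulo $p$ in the intersection $A_1 \cap A_2 \cap A_3 \cap A_4$. We proceed by searching among primes in a range $[N_1(a), N_2(a)]$ (for instance of the form $[c_1 a, c_2 a]$, chosen to balance counting estimates against the density of the good set) and invoking an effective form of Dirichlet's theorem for primes in arithmetic progressions modulo $5$ (e.g., Rosser--Schoenfeld-type bounds). Combined with the positive density $\geq 1/5$ of the good set established in the previous paragraph, this yields at least one admissible prime provided $a$ is sufficiently large. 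The explicit threshold $x > 18188$ emerges from balancing the effective prime count against this density, uniformly over the eight subcases for $y$; any finite number of exceptional values in the range $x \leq 18188$ can be verified by direct computation.
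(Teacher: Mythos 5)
Your overall strategy is sound and matches the paper's: pick a prime $p$ for which the zeroth $p$-adic digit of $-y-1$ strictly dominates the zeroth digits of the three parameters $-\tfrac{1}{3}(x+y)-\tfrac{3}{2}$, $-\tfrac{1}{3}(x+y)-\tfrac{7}{6}$, $-\tfrac{1}{3}(x+y)-\tfrac{5}{6}$, then apply the carry formula \eqref{coeffval} with $k = p - y_0$ to force $v_p(B_k)\leq -1$. (You also require domination of the digit of $-x-1$, which is harmless but superfluous: a large digit there only makes $v_p$ more negative, since $c_p(-x-1,k)$ is subtracted.)

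However, the bridge you build between ``good residues are dense'' and ``a good prime exists for this particular $x$'' does not carry the weight. Your Bonferroni estimate $\abs{A_1\cap\cdots\cap A_4}\geq (p-16)/5$ quantifies, \emph{for a fixed prime $p$}, how many residues $a\bmod p$ satisfy the digit inequalities. It says nothing about the dual question you actually need to answer: for a \emph{fixed} $a = 5x$, for which primes $p$ does $a\bmod p$ land in the (prime-dependent) good set? There is no averaging principle that turns the first statement into the second, and an effective Dirichlet theorem alone only guarantees primes in a progression -- it cannot by itself certify that any of those primes makes $a$ good. Consequently the ``balancing'' you invoke to extract the numerical threshold $x>18188$ is not anchored to any actual estimate in your argument; that constant does not emerge from what you wrote.

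What the paper does instead is to \emph{explicitly describe the good primes for a fixed $x$}. First, one must fix the residue class of $p$ modulo $30$ (not merely modulo $5$): the digits of $-\tfrac{1}{3}(x+y)-\tfrac{3}{2}$, etc., involve $p$-adic inverses of $3$ and $2$, so the affine maps you mention depend on $p\bmod 6$ as well as on $p\bmod 5$. Having fixed the class $p_0\pmod{30}$, one observes that incorporating $x$ shifts the three relevant digits by $[(p-1)x/3]_p$, and the requirement becomes $0 < [(p-1)x/3]_p < p/30$. Writing $x_0 = 5x$, this reduces to demanding a prime $p\equiv p_0 \pmod{30}$ in an explicit short interval $X < p < \tfrac{28}{27}X$ with $X = x_0/[x_0 - A[x_0]_5]_{15}$. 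The existence of such a prime, for $X>6496$, is then the content of the effective Bertrand-type Theorem \ref{t:bertrand5} of Appendix \ref{a:bertrand}, and the bound $x>18188$ follows from $X\geq x_0/14$. Your proof needs exactly this missing step: translating ``$a$ is good mod $p$'' into ``$p$ lies in an explicit interval $(c_1 x, c_2 x)$ intersected with a fixed progression mod $30$,'' after which the density language is no longer needed.
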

\begin{proof}
There is a unique nonzero congruence class $p_0 \pmod{30}$ such that for all primes $p\equiv p_0\pmod{30}$ big enough (e.g. $p > 96$ suffices), the zeroth $p$-adic digit of $-y-1$ is of the form $\frac{4p+A}{5}$ where $4p+A\equiv 0\pmod{5}$, and the zeroth $p$-adic digit of $-1/3$ is $(p-1)/3$ (this second condition just forces $p_0\equiv 1\pmod{3}$). Note that $A$ depends on $y$, but there are finitely many choices for $y$, so it's bounded absolutely. For example, the following table lists the zeroth $p$-adic digits of some relevant quantities when $y = -1/5$:
\[
  \renewcommand{\arraystretch}{1.5}
  \begin{array}{|c|c|c|c|c|c|c|c|c|}\hline
    y=-\frac 15&p\equiv 1& p\equiv 7& p\equiv 11 & p\equiv 13 & p\equiv 17 & p\equiv 19 & p\equiv 23 & p\equiv 29\\
    \hline
    -\frac{y}{3}-\frac{3}{2}&\frac{13p-43}{30}&\frac{19p-43}{30}&\frac{23p-43}{30}&\frac{p-43}{30}&\frac{29p-43}{30}&\frac{7p-43}{30}&\frac{11p-43}{30}&\frac{17p-43}{30}\\
    -\frac{y}{3}-\frac{7}{6}&\frac{3p-33}{30}&\frac{9p-33}{30}&\frac{3p-33}{30}&\frac{21p-33}{30}&\frac{9p-33}{30}&\frac{27p-33}{30}&\frac{21p-33}{30}&\frac{27p-33}{30}\\
    -\frac{y}{3}-\frac{5}{6}&\frac{23p-23}{30}&\frac{29p-23}{30}&\frac{13p-23}{30}&\frac{11p-23}{30}&\frac{19p-23}{30}&\frac{17p-23}{30}&\frac{p-23}{30}&\frac{7p-23}{30}\\
    \hline
    -y-1&\frac{24p-24}{30}&\frac{12p-24}{30}&\frac{24p-24}{30}&\frac{18p-24}{30}&\frac{12p-24}{30}&\frac{6p-24}{30}&\frac{18p-24}{30}&\frac{6p-24}{30}\\
    -\frac 13 & \frac{p-1}{3} & \frac{p-1}{3} & \frac{2p-1}{3} & \frac{p-1}{3} & \frac{2p-1}{3} & \frac{p-1}{3} & \frac{2p-1}{3} & \frac{2p-1}{3}\\
    \hline
  \end{array}
\]
A similar table exists for each choice of $y$, and the important feature is that there is always a unique column where $-y-1$ has zeroth digit asymptotic to $4p/5$, and $-1/3$ has zeroth $p$-adic digit $(p-1)/3$. When $y=-1/5$ this is the column $p\equiv 1 \pmod{30}$, but in general it is some class mod $30$ such that $p\equiv 1 \pmod{3}$.

So far we have ignored the occurences of $x$ in the formula \eqref{coeffval} for $v_p(B_k)$. We incorporate this information next. Taking account of $x$ has the effect of shifting the digits in first three rows of the table above by a uniform amount (the zeroth $p$-adic digit of $-x/3$) modulo $p$. The key is to find primes $p$ such that this shift does not make one of the entries in the first three rows larger than the zeroth $p$-adic digit of $-y-1$. Therefore, given $x$, it will suffice to prove that there exists a prime $p\equiv p_0\pmod{30}$ satisfying $p > 96$ and
\begin{equation}
\label{eq:xineq}
  0 < \left[\frac{(p-1)x}{3}\right]_p < \frac{p}{30},
\end{equation}
(where $[\alpha]_p$ denotes the least nonnegative residue of an integer $\alpha$ mod $p$). This is due to the fact that $\left[(p-1)x/3\right]_p$ is the zeroth $p$-adic digit of $-x/3$, which is the amount that we are shifting $p$-adic digits by. 

Observe that if we write $x = x_0/5$ then
\[
\left[\frac{(p-1)x}{3}\right]_p = \begin{cases}
\left\{\frac{x_0}{15}\right\}p - \frac{x_0}{15}  & p\equiv 1 \pmod{30},~ p > \frac{x_0}{[x_0]_{15}},\\
\left\{\frac{x_0-3[x_0]_{5}}{15}\right\}p - \frac{x_0}{15}&p\equiv 7\pmod{30},~ p > \frac{x_0}{[x_0-3[x_0]_5]_{15}},\\
\left\{\frac{x_0-9[x_0]_{5}}{15}\right\}p - \frac{x_0}{15}&p\equiv 13\pmod{30},~p > \frac{x_0}{[x_0-9[x_0]_5]_{15}},\\
\left\{\frac{x_0-12[x_0]_{5}}{15}\right\}p - \frac{x_0}{15}&p\equiv 19\pmod{30},~p > \frac{x_0}{[x_0-12[x_0]_5]_{15}},.
\end{cases}
\]
In each case there is an integer $A$ (in fact $A = 1,3,9$ or $12$) such that we win if there exists a prime $p\equiv p_0 \pmod{30}$ with
\[
  \frac{x_0}{[x_0-A[x_0]_5]_{15}} < p
\]
and
\[
\left\{\frac{x_0-A[x_0]_{5}}{15}\right\}p - \frac{x_0}{15} < \frac{p}{30}
\]
These two inequalities are equivalent with
\[
\frac{x_0}{[x_0-A[x_0]_5]_{15}} < p < \frac{x_0}{[x_0-A[x_0]_5]_{15}-\frac 12}.
\]
If we set $X = \frac{x_0}{[x_0-A[x_0]_5]_{15}}$ then this is equivalent with
\[
  X < p < \left(\frac{[x_0-A[x_0]_5]_{15}}{[x_0-A[x_0]_5]_{15}-\frac 12}\right)X
\]
In all cases, the complicated scalar factor in the rightmost inequality above is minimized as $28/27$. Therefore, if we can show that for $X>N$ for an explicit $N$, there is always a prime $p \equiv p_0 \pmod{30}$ that satisfies $X < p < (28/27)X$, then we will be done by the discussion following equation \eqref{coeffval}.

It is a standard argument from analytic number theory that such generalizations of Bertrand's postulate (incorporating more general scalar factors, and restricting to congruence classes of primes) can be proven if one has a sufficiently good understanding of zeros of Dirichlet $L$-functions. For an explicit discussion involving effective results, see Appendix \ref{a:bertrand}. In particular, Theorem \ref{t:bertrand5} of Appendix \ref{a:bertrand} implies that $f_0$ will not be integral as long as $X > 6496$. Therefore, $f_0$ is not integral if $x_0 > 14\cdot 6496$. Since $x = x_0/5$, the Proposition follows.
\end{proof}

Proposition \ref{p:5} allows the classification of all solutions to Equation \eqref{eq1} with $\abs{y+1} < 1$ and $y$ of the form $y = y_0/5$ such that the corresponding function $f_0$ has positive integral Fourier coefficients, and such that the first two Fourier coefficients of $f_1$ and $f_2$ are nonnegative. We computed the first thousand Fourier coefficients of $f_0$, $f_1$ and $f_2$ for all solutions to Equation \eqref{eq1} as in Proposition \ref{p:5}, but with $x \leq 18188$, and tabulated which have the property that
\begin{enumerate}
\item the first thousand coefficients of $f_0$ are nonnegative integers;
\item the first thousand coefficients of $f_1$ and $f_2$ are nonnegative.
\end{enumerate}
Using only the first thousand coefficients already cut the number of possibilities for $f_0$ down dramatically. The results of this computation are in Figure \ref{f:full57}.

A similar argument works for all other $y$-fibers with $\abs{y+1} < 1$ of interest to us, save for those with $y = -1/2$ and $y = -3/2$. As mentioned above, the issue in these two cases is that the $p$-adic expansion of $-y-1$ has a zeroth coefficient asymptotic to $p/2$, so it is harder to use the technique described above to find primes such that its zeroth digit is the largest among the five hypergeometric parameters appearing in Equation \eqref{coeffval}. Thus, we treat these two cases next.

Upon specialization to these two values of $y$, Equation \eqref{eq1} factors as:
\begin{align*}
  y = -1/2: &\quad  x(128x^2+248x-m+120) = 0,\\
  y = -3/2: &\quad  x(128x^2+360x-3m+256) = 0.
\end{align*}
Therefore, among the horizontal fibers, it remains to consider solutions $(m,x,y)$ to Equation \eqref{eq1} of the form 
\begin{align*}
\left(m,0,-\tfrac 12\right), && \left(m,0,-\tfrac 32\right), && \left(\tfrac{n^2-64}{512},\tfrac{-248\pm n}{256},-\tfrac 12\right), && \left(\tfrac{n^2+1472}{1536},\tfrac{-360\pm n}{256},-\tfrac 32\right) 
\end{align*}
The first two sections of Equation \eqref{eq1} with $x = 0$ correspond to reducible monodromy representations, since $x$ is an integer, and so we can ignore them for the present classification of VOAs with \emph{irreducible} monodromy. Thus, since it remains to consider solutions to \eqref{eq1} in the horizontal region with $x > 3/2$, the other points having already been tabulated, it remains in this region to consider the two families of solutions:
\begin{align*}
  \left(\tfrac{n^2-64}{512},\tfrac{n-248}{256},-\tfrac 12\right) &\quad n> 632,&
  \left(\tfrac{n^2+1472}{1536},\tfrac{n-360}{256},-\tfrac 32\right) &\quad n > 744.
\end{align*}
Any points above corresponding to a finite monodromy representation as classified in Section \ref{s:monodromy} will necessarily correspond to imprimitive representations. In order to be irreducible, the $x$ values cannot be in $(1/2)\ZZ$, and thus by Section \ref{s:monodromy} they must have denominator equal to $4$, $8$ or $16$ when expressed in lowest terms. Hence in the first case we are only interested in values of $n$ such that $\frac{n-248}{256} = \frac{\alpha}{16}$ for an integer $\alpha\not\equiv 0\pmod{8}$, while in the second we are only interested in values of $n$ such that $\frac{n-360}{256} =\frac{\beta}{16}$ for $\beta \not \equiv 0 \pmod{8}$. Thus, taking this integrality condition into consideration, we need only consider solutions of the form
\begin{align*}
  \left(\tfrac{(\alpha+15)(\alpha+16)}{2},\tfrac{\alpha}{16},-\tfrac 12\right), &&  \left(\tfrac{\beta^2+45\beta+512}{6},\tfrac{\beta}{16},-\tfrac 32\right),
\end{align*}
where $\alpha,\beta > 24$ are integers such that $\alpha,\beta \not \equiv 0 \pmod{8}$. Notice that $m = \tfrac{(\alpha+15)(\alpha+16)}{2}$ is always a positive integer for positive integral values of $\alpha$. On the other hand, the ratio $\tfrac{\beta^2+45\beta+512}{6}$ is only a positive integer if additionally $\beta \not \equiv 0\pmod{3}$. We shall show in Section \ref{s:quadraticfamily} below that the first family of points in terms of $\alpha$ do in fact correspond to known VOAs -- all but finitely many of the examples in Theorem \ref{thmmain} correspond to points in this family! In the remainder of this section we show that the family of points defined in terms of $\beta$ does \emph{not} correspond to any VOAs (save for some small values of $\beta$).

Consider now the values $(m,x,y) = \left(\frac{\beta^2+45\beta+512}{6},\frac{\beta}{16},-\frac 32\right)$ where $\beta > 24$ is not divisible by $3$ and it is not divisible by $8$. In this case we have
\begin{align*}
  v_p(B_k) = &c_p\left(-\tfrac{\beta+48}{48},k\right) + c_p\left(-\tfrac{\beta+32}{48},k\right)+c_p\left(-\tfrac{\beta+16}{48},k\right)\\
  &- c_p\left(-\tfrac{\beta+16}{16},k\right)-c_p\left(\tfrac 12,k\right).
\end{align*}
Let $p>3$ be a prime divisor of $\beta+24$. The parameters above are congruent to the following quantities mod $p^2$:
\[
  \renewcommand{\arraystretch}{1.5}
  \begin{array}{|c|c|c|}\hline
    y=-3/2&p\equiv 1 \pmod{3} & p\equiv 2 \pmod{3}\\
    \hline
    -1-\frac{\beta}{48}&\left(\frac{p-1}{2}\right)+\left(\frac{p-1}{2}\right)p&\left(\frac{p-1}{2}\right)+\left(\frac{p-1}{2}\right)p\\
    -\frac 23-\frac{\beta}{48}&\left(\frac{p-1}{6}\right)+\left(\frac{p-1}{6}\right)p&\left(\frac{5p-1}{6}\right)+\left(\frac{p-5}{6}\right)p\\
    -\frac 13 -\frac{\beta}{48}&\left(\frac{5p+1}{6}\right)+\left(\frac{5p-5}{6}\right)p&\left(\frac{p+1}{6}\right)+\left(\frac{5p-1}{6}\right)p\\
    \hline
    -1-\frac{3\beta}{48}&\left(\frac{p+1}{2}\right)+\left(\frac{p-1}{2}\right)p&\left(\frac{p+1}{2}\right)+\left(\frac{p-1}{2}\right)p\\
    \frac 12&\left(\frac{p+1}{2}\right)+\left(\frac{p-1}{2}\right)p&\left(\frac{p+1}{2}\right)+\left(\frac{p-1}{2}\right)p\\
    \hline
  \end{array}
\]
Therefore, if $p > 3$ is a prime divisor of $\beta+24$ we find that $v_p(B_{(p-1)/2}) = -1$. Notice that since $\beta$ is coprime to $3$, $\beta+24$ is likewise coprime to $3$. Therefore, $\beta+24$ can only fail to have an odd prime divisor $p > 3$ if $\beta+24 = 2^u$ for some $u \geq 0$. If $u \geq  3$ then this violates that $8$ does not divide $\beta$. We thus see that thanks to our hypotheses, there is always a prime $p > 3$ that divides $\beta+24$.

It now remains to verify that, for such a prime $p$, the factor of $p$ in the denominator of $B_{(p-1)/2}$ is not canceled upon multiplying the hypergeometric factor by the power $j^{\beta/48}$ and substituting $1728/j$ for the argument of $_3F_2$, as in the definition of $f_0$. This is a straightforward computation using the $q$-expansions for $1728/j$ and $j^{\beta/48}$, where the latter $q$-expansion is computed via the binomial theorem. Therefore, this family of points does not contribute any series $f_0$ with nonnegative integer coefficients for parameters, and hence there is no corresponding VOA for any of these choice of parameters.

In this way one can parameterize all possible rational solutions to \eqref{eq1} in the horizontal region in Figure \ref{f:positiveregion} with $\abs{y+1} \leq 1$ where $f_0$ has positive integral Fourier coefficients, and the first two coefficients of $f_1$ and $f_2$ are positive. 

\subsection{The diagonal fibers}
It remains finally to treat the diagonal fibers in Figure \ref{f:positiveregion}. Thus suppose that $x-y = a$ for some $\abs{a} \leq 1$. In fact, we may suppose that $y= x-a$ for $0 < a < 1$, since the cases where $a=0,1$ correspond to reducible monodromy, and we may assume $a > 0$ by making use of the $(x,y)$ symmetry of Equation \eqref{eq1}. In this case,
\begin{align*}
  v_p(B_k) = &c_p\left(-\tfrac{2}{3}x+\tfrac{2a-9}{6},k\right) + c_p\left(-\tfrac{2}{3}x+\tfrac {2a-7}{6},k\right)+c_p\left(-\tfrac{2}{3}x+\tfrac{2a-5}{6},k\right)\\
  &- c_p(-x-1,k)-c_p(a-x-1,k).
\end{align*}
By the classification of the possible monodromy representations of Section \ref{s:monodromy}, we need only consider the cases where $a = b/5$, $c/7$ or $d/16$, and then $x$ must also be a rational number with denominator supported at the same prime. These three cases can be treated as we treated the horizontal fibers in the previous subsection, by choosing primes so that the zeroth $p$-adic coefficient of $-x-1$ is large relative to the other quantities appearing above. It turns out that no new solutions to equation \eqref{eq1} arise in this diagonal region (outside of the boxed area where $\abs{x+1} \leq 5/2$, $\abs{y+1}\leq 5/2$ which was treated separately by a finite computation), where $f_0$ has positive and integral Fourier coefficients. This concludes our discussion of how to describe a list, corresponding to one infinite family and a number of sporadic exceptions, of solutions to Equation \eqref{eq1} that can be used to establish Theorem \ref{thmmain}.

In Figures \ref{f:full57} and \ref{f:full2} on pages \pageref{f:full57} and \pageref{f:full2}, we list all possible solutions to equation \eqref{eq1} such that $f_0$, $f_1$ and $f_2$ satisfy:
\begin{enumerate}
\item the monodromy is irreducible with a congruence subgroup as kernel;
\item the first thousand Fourier coefficients of $f_0$, $f_1$ and $f_2$ are all nonnegative;
\item the first thousand Fourier coefficients of $f_0$ are integers.
\end{enumerate}
We believe that (3) could be easily strengthened to show that $f_0$ is in fact positive integral in each case, but we have not gone to the trouble of doing so. This is because in all of the cases of interest for this paper, namely those corresponding to VOAs, integrality follows automatically since the Fourier coefficients count dimensions of finite dimensional vector spaces.

We shall show that most of the entries in Figures \ref{f:full57} and \ref{f:full2} are \emph{not} realized by a strongly regular VOA with exactly $3$ nonisomorphic simple modules and irreducible monic monodromy. Presumably some of these sets of parameters are realized by VOAs $V$ with a $3$-dimensional space of characters $\ch_V$ but more than $3$ simple modules, and therefore we include the full dataset.

\begin{figure}[h!]
   \renewcommand{\arraystretch}{1.35}
\begin{tabular}{ccc}
\begin{tabular}[t]{|c|c|c|c|c|}
  \hline
  $m$ & $h_1$ & $h_2$ & $c$ & $\widetilde{c}$ \\
  \hline
$0$ & $-\frac{1}{5}$ & $-\frac{2}{5}$ & $-\frac{44}{5}$& $\frac{4}{5}$ \\
$0$ & $\frac{12}{5}$ & $\frac{11}{5}$ & $\frac{164}{5}$& $\frac{164}{5}$ \\
$1$ & $\frac{1}{5}$ & $-\frac{1}{5}$ & $-4$ &$\frac{4}{5}$ \\
$2$ & $\frac{2}{5}$ & $\frac{1}{5}$ & $\frac{4}{5}$ &$\frac{4}{5}$ \\
$3$ & $-\frac{2}{5}$ & $-\frac{3}{5}$ & $-12$ &$\frac{12}{5}$ \\
$3$ & $\frac{3}{5}$ & $\frac{1}{5}$ & $\frac{12}{5}$ &$\frac{12}{5}$ \\
$10$ & $\frac{1}{5}$ & $-\frac{2}{5}$ & $-\frac{28}{5}$& $4$ \\
$24$ & $\frac{3}{5}$ & $\frac{2}{5}$ & $4$ & $4$ \\
$27$ & $\frac{9}{5}$ & $\frac{7}{5}$ & $\frac{108}{5}$ &$\frac{108}{5}$ \\
$28$ & $\frac{4}{5}$ & $\frac{2}{5}$ & $\frac{28}{5}$ &$\frac{28}{5}$ \\
$58$ & $\frac{9}{5}$ & $\frac{8}{5}$ & $\frac{116}{5}$ &$\frac{116}{5}$ \\
$92$ & $\frac{8}{5}$ & $\frac{6}{5}$ & $\frac{92}{5}$ &$\frac{92}{5}$ \\
$104$ & $\frac{6}{5}$ & $\frac{3}{5}$ & $\frac{52}{5}$ &$\frac{52}{5}$ \\
$105$ & $\frac{4}{5}$ & $-\frac{3}{5}$ & $-\frac{12}{5}$& $12$ \\
$120$ & $\frac{8}{5}$ & $\frac{7}{5}$ & $20$ & $20$ \\
$136$ & $\frac{7}{5}$ & $\frac{4}{5}$ & $\frac{68}{5}$ &$\frac{68}{5}$ \\
$144$ & $\frac{4}{5}$ & $\frac{3}{5}$ & $\frac{36}{5}$ &$\frac{36}{5}$ \\
  \hline
\end{tabular} &
                \begin{tabular}[t]{|c|c|c|c|c|}
  \hline
  $m$ & $h_1$ & $h_2$ & $c$ & $\widetilde{c}$ \\
  \hline
$156$ & $\frac{6}{5}$ & $-\frac{2}{5}$ & $\frac{12}{5}$& $12$ \\
$220$ & $\frac{6}{5}$ & $\frac{2}{5}$ & $\frac{44}{5}$ &$\frac{44}{5}$ \\
$222$ & $\frac{7}{5}$ & $\frac{3}{5}$ & $12$ & $12$ \\
$253$ & $\frac{7}{5}$ & $\frac{1}{5}$ & $\frac{44}{5}$ &$\frac{44}{5}$ \\
$312$ & $\frac{11}{5}$ & $-\frac{2}{5}$ & $\frac{52}{5}$& $20$ \\
$336$ & $\frac{7}{5}$ & $\frac{6}{5}$ & $\frac{84}{5}$ &$\frac{84}{5}$ \\
$374$ & $\frac{9}{5}$ & $\frac{2}{5}$ & $\frac{68}{5}$ &$\frac{68}{5}$ \\
$380$ & $\frac{8}{5}$ & $\frac{4}{5}$ & $\frac{76}{5}$ &$\frac{76}{5}$ \\
$437$ & $\frac{9}{5}$ & $\frac{3}{5}$ & $\frac{76}{5}$ &$\frac{76}{5}$ \\
$534$ & $\frac{12}{5}$ & $\frac{1}{5}$ & $\frac{84}{5}$& $\frac{84}{5}$ \\
$690$ & $\frac{11}{5}$ & $\frac{3}{5}$ & $\frac{92}{5}$& $\frac{92}{5}$ \\
$860$ & $\frac{14}{5}$ & $\frac{2}{5}$ & $\frac{108}{5}$ & $\frac{108}{5}$ \\
$1404$ & $\frac{12}{5}$ & $\frac{4}{5}$ & $\frac{108}{5}$& $\frac{108}{5}$ \\
$1536$ & $\frac{16}{5}$ & $\frac{3}{5}$ & $\frac{132}{5}$& $\frac{132}{5}$ \\
$1711$ & $\frac{13}{5}$ & $\frac{4}{5}$ & $\frac{116}{5}$& $\frac{116}{5}$ \\
$3612$ & $\frac{18}{5}$ & $\frac{4}{5}$ & $\frac{156}{5}$& $\frac{156}{5}$ \\
$13110$ & $\frac{33}{5}$ & $\frac{4}{5}$ & $\frac{276}{5}$& $\frac{276}{5}$ \\
  \hline
\end{tabular} &
\begin{tabular}[t]{|c|c|c|c|c|}
  \hline
  $m$ & $h_1$ & $h_2$ & $c$ & $\widetilde{c}$ \\
  \hline
$0$ & $-\frac{2}{7}$ & $-\frac{3}{7}$ & $-\frac{68}{7}$& $\frac{4}{7}$ \\
$0$ & $\frac{17}{7}$ & $\frac{16}{7}$ & $\frac{236}{7}$& $\frac{236}{7}$ \\
$1$ & $\frac{2}{7}$ & $-\frac{1}{7}$ & $-\frac{20}{7}$ &$\frac{4}{7}$ \\
$1$ & $\frac{3}{7}$ & $\frac{1}{7}$ & $\frac{4}{7}$ &$\frac{4}{7}$ \\
$6$ & $\frac{3}{7}$ & $\frac{2}{7}$ & $\frac{12}{7}$ &$\frac{12}{7}$ \\
$41$ & $\frac{13}{7}$ & $\frac{11}{7}$ & $\frac{164}{7}$& $\frac{164}{7}$ \\
$78$ & $\frac{12}{7}$ & $\frac{11}{7}$ & $\frac{156}{7}$& $\frac{156}{7}$ \\
$88$ & $\frac{5}{7}$ & $\frac{4}{7}$ & $\frac{44}{7}$ &$\frac{44}{7}$ \\
$156$ & $\frac{6}{7}$ & $\frac{4}{7}$ & $\frac{52}{7}$ &$\frac{52}{7}$ \\
$210$ & $\frac{8}{7}$ & $\frac{3}{7}$ & $\frac{60}{7}$ &$\frac{60}{7}$ \\
$221$ & $\frac{9}{7}$ & $\frac{3}{7}$ & $\frac{68}{7}$ &$\frac{68}{7}$ \\
$248$ & $\frac{10}{7}$ & $\frac{9}{7}$ & $\frac{124}{7}$& $\frac{124}{7}$ \\
$325$ & $\frac{11}{7}$ & $\frac{5}{7}$ & $\frac{100}{7}$& $\frac{100}{7}$ \\
$348$ & $\frac{10}{7}$ & $\frac{8}{7}$ & $\frac{116}{7}$& $\frac{116}{7}$ \\
$378$ & $\frac{11}{7}$ & $\frac{6}{7}$ & $\frac{108}{7}$& $\frac{108}{7}$ \\
$380$ & $\frac{12}{7}$ & $\frac{4}{7}$ & $\frac{100}{7}$& $\frac{100}{7}$ \\
$456$ & $\frac{13}{7}$ & $\frac{4}{7}$ & $\frac{108}{7}$& $\frac{108}{7}$ \\
$1248$ & $\frac{18}{7}$ & $\frac{5}{7}$ & $\frac{156}{7}$& $\frac{156}{7}$ \\
  \hline
\end{tabular}
\end{tabular}
\caption{Full dataset of parameters with $f_0$ nonnegative integral and $f_1$ and $f_2$ nonnegative, where parameters have denominators $5$ and $7$. Since we only used one-thousand Fourier coefficients to generate this data, some of these series could in fact fail to be integral, but certainly the integral list is a subset of ours.}
\label{f:full57}
\end{figure}

\begin{figure}[h!]
   \renewcommand{\arraystretch}{1.2}
  \begin{tabular}{ccc}
\begin{tabular}[t]{|c|c|c|c|c|}
  \hline
  $m$ & $h_1$ & $h_2$ & $c$ & $\widetilde{c}$ \\
  \hline
$0$ & $\frac{31}{16}$ & $\frac{3}{2}$ & $\frac{47}{2}$ &
$\frac{47}{2}$ \\
$1$ & $-\frac{7}{16}$ & $-\frac{1}{2}$ & $-\frac{23}{2}$
& $\frac{1}{2}$ \\
$1$ & $-\frac{3}{8}$ & $-\frac{1}{2}$ & $-11$ & $1$ \\
$1$ & $\frac{7}{16}$ & $-\frac{1}{16}$ & $-1$ &
$\frac{1}{2}$ \\
$2$ & $-\frac{1}{4}$ & $-\frac{1}{2}$ & $-10$ & $2$ \\
$2$ & $\frac{3}{8}$ & $-\frac{1}{8}$ & $-2$ & $1$ \\
$3$ & $\frac{5}{16}$ & $-\frac{3}{16}$ & $-3$ &
$\frac{3}{2}$ \\
$4$ & $\frac{1}{4}$ & $-\frac{1}{4}$ & $-4$ & $2$ \\
$5$ & $\frac{3}{16}$ & $-\frac{5}{16}$ & $-5$ &
$\frac{5}{2}$ \\
$6$ & $\frac{1}{8}$ & $-\frac{3}{8}$ & $-6$ & $3$ \\
$7$ & $\frac{1}{16}$ & $-\frac{7}{16}$ & $-7$ &
$\frac{7}{2}$ \\
$9$ & $-\frac{1}{16}$ & $-\frac{9}{16}$ & $-9$ &
$\frac{9}{2}$ \\
$10$ & $-\frac{1}{8}$ & $-\frac{5}{8}$ & $-10$ & $5$ \\
$11$ & $-\frac{3}{16}$ & $-\frac{11}{16}$ & $-11$ &
$\frac{11}{2}$ \\
$12$ & $-\frac{1}{4}$ & $-\frac{3}{4}$ & $-12$ & $6$ \\
$13$ & $-\frac{5}{16}$ & $-\frac{13}{16}$ & $-13$ &
$\frac{13}{2}$ \\
$14$ & $-\frac{3}{8}$ & $-\frac{7}{8}$ & $-14$ & $7$ \\
$15$ & $-\frac{7}{16}$ & $-\frac{15}{16}$ & $-15$ &
$\frac{15}{2}$ \\
$17$ & $-\frac{9}{16}$ & $-\frac{17}{16}$ & $-17$ &
$\frac{17}{2}$ \\
$18$ & $-\frac{5}{8}$ & $-\frac{9}{8}$ & $-18$ & $9$ \\
$19$ & $-\frac{11}{16}$ & $-\frac{19}{16}$ & $-19$ &
$\frac{19}{2}$ \\
$20$ & $-\frac{3}{4}$ & $-\frac{5}{4}$ & $-20$ & $10$ \\
$21$ & $-\frac{13}{16}$ & $-\frac{21}{16}$ & $-21$ &
$\frac{21}{2}$ \\
$22$ & $-\frac{7}{8}$ & $-\frac{11}{8}$ & $-22$ & $11$
\\
$23$ & $-\frac{15}{16}$ & $-\frac{23}{16}$ & $-23$ &
$\frac{23}{2}$ \\
$23$ & $\frac{15}{8}$ & $\frac{3}{2}$ & $23$ & $23$ \\
$25$ & $-\frac{17}{16}$ & $-\frac{25}{16}$ & $-25$ &
$\frac{25}{2}$ \\
$26$ & $-\frac{9}{8}$ & $-\frac{13}{8}$ & $-26$ & $13$
\\
$27$ & $-\frac{19}{16}$ & $-\frac{27}{16}$ & $-27$ &
$\frac{27}{2}$ \\
$28$ & $-\frac{5}{4}$ & $-\frac{7}{4}$ & $-28$ & $14$ \\

                     \hline
                   \end{tabular}&
\begin{tabular}[t]{|c|c|c|c|c|}
  \hline
  $m$ & $h_1$ & $h_2$ & $c$ & $\widetilde{c}$ \\
  \hline
$29$ & $-\frac{21}{16}$ & $-\frac{29}{16}$ & $-29$ &
$\frac{29}{2}$ \\
  $30$ & $-\frac{11}{8}$ & $-\frac{15}{8}$ & $-30$ & $15$
\\
$31$ & $-\frac{23}{16}$ & $-\frac{31}{16}$ & $-31$ &
$\frac{31}{2}$ \\
$33$ & $-\frac{25}{16}$ & $-\frac{33}{16}$ & $-33$ &
$\frac{33}{2}$ \\
$34$ & $-\frac{13}{8}$ & $-\frac{17}{8}$ & $-34$ & $17$
\\
$35$ & $-\frac{27}{16}$ & $-\frac{35}{16}$ & $-35$ &
$\frac{35}{2}$ \\
$36$ & $-\frac{7}{4}$ & $-\frac{9}{4}$ & $-36$ & $18$ \\
$37$ & $-\frac{29}{16}$ & $-\frac{37}{16}$ & $-37$ &
$\frac{37}{2}$ \\
$38$ & $-\frac{15}{8}$ & $-\frac{19}{8}$ & $-38$ & $19$
\\
$39$ & $-\frac{31}{16}$ & $-\frac{39}{16}$ & $-39$ &
$\frac{39}{2}$ \\
$45$ & $\frac{29}{16}$ & $\frac{3}{2}$ & $\frac{45}{2}$
& $\frac{45}{2}$ \\
$66$ & $\frac{7}{4}$ & $\frac{3}{2}$ & $22$ & $22$ \\
$86$ & $\frac{27}{16}$ & $\frac{3}{2}$ & $\frac{43}{2}$
& $\frac{43}{2}$ \\
$105$ & $\frac{13}{8}$ & $\frac{3}{2}$ & $21$ & $21$ \\
$123$ & $\frac{25}{16}$ & $\frac{3}{2}$ & $\frac{41}{2}$
& $\frac{41}{2}$ \\
$156$ & $\frac{3}{2}$ & $\frac{23}{16}$ & $\frac{39}{2}$
& $\frac{39}{2}$ \\
$171$ & $\frac{3}{2}$ & $-\frac{7}{16}$ & $\frac{9}{2}$
& $15$ \\
$171$ & $\frac{3}{2}$ & $\frac{11}{8}$ & $19$ & $19$ \\
$185$ & $\frac{3}{2}$ & $-\frac{3}{8}$ & $5$ & $14$ \\
$185$ & $\frac{3}{2}$ & $\frac{21}{16}$ & $\frac{37}{2}$
& $\frac{37}{2}$ \\
$198$ & $\frac{3}{2}$ & $-\frac{5}{16}$ & $\frac{11}{2}$
& $13$ \\
$198$ & $\frac{3}{2}$ & $\frac{5}{4}$ & $18$ & $18$ \\
$210$ & $\frac{3}{2}$ & $-\frac{1}{4}$ & $6$ & $12$ \\
$210$ & $\frac{3}{2}$ & $\frac{19}{16}$ & $\frac{35}{2}$
& $\frac{35}{2}$ \\
$221$ & $\frac{3}{2}$ & $-\frac{3}{16}$ & $\frac{13}{2}$
& $11$ \\
$221$ & $\frac{3}{2}$ & $\frac{9}{8}$ & $17$ & $17$ \\
  $231$ & $\frac{3}{2}$ & $-\frac{1}{8}$ & $7$ & $10$ \\
$231$ & $\frac{3}{2}$ & $\frac{17}{16}$ & $\frac{33}{2}$
                            & $\frac{33}{2}$ \\
  $240$ & $\frac{3}{2}$ & $-\frac{1}{16}$ & $\frac{15}{2}$
& $9$ \\
$248$ & $\frac{3}{2}$ & $\frac{15}{16}$ & $\frac{31}{2}$
& $\frac{31}{2}$ \\
  \hline
                   \end{tabular}&
\begin{tabular}[t]{|c|c|c|c|c|}
  \hline
  $m$ & $h_1$ & $h_2$ & $c$ & $\widetilde{c}$ \\
  \hline
$255$ & $\frac{3}{2}$ & $\frac{1}{16}$ & $\frac{17}{2}$
& $\frac{17}{2}$ \\
$255$ & $\frac{3}{2}$ & $\frac{7}{8}$ & $15$ & $15$ \\
$261$ & $\frac{3}{2}$ & $\frac{1}{8}$ & $9$ & $9$ \\
$261$ & $\frac{3}{2}$ & $\frac{13}{16}$ & $\frac{29}{2}$
& $\frac{29}{2}$ \\
$266$ & $\frac{3}{2}$ & $\frac{3}{16}$ & $\frac{19}{2}$
& $\frac{19}{2}$ \\
$266$ & $\frac{3}{2}$ & $\frac{3}{4}$ & $14$ & $14$ \\
$270$ & $\frac{3}{2}$ & $\frac{1}{4}$ & $10$ & $10$ \\
$270$ & $\frac{3}{2}$ & $\frac{11}{16}$ & $\frac{27}{2}$
& $\frac{27}{2}$ \\
$273$ & $\frac{3}{2}$ & $\frac{5}{16}$ & $\frac{21}{2}$
& $\frac{21}{2}$ \\
$273$ & $\frac{3}{2}$ & $\frac{5}{8}$ & $13$ & $13$ \\
$275$ & $\frac{3}{2}$ & $\frac{3}{8}$ & $11$ & $11$ \\
$275$ & $\frac{3}{2}$ & $\frac{9}{16}$ & $\frac{25}{2}$
& $\frac{25}{2}$ \\
$276$ & $\frac{3}{2}$ & $\frac{7}{16}$ & $\frac{23}{2}$
& $\frac{23}{2}$ \\
$496$ & $\frac{45}{16}$ & $-\frac{5}{16}$ & $16$ &
$\frac{47}{2}$ \\
$496$ & $\frac{23}{8}$ & $-\frac{3}{8}$ & $16$ & $25$ \\
$496$ & $\frac{47}{16}$ & $-\frac{7}{16}$ & $16$ &
$\frac{53}{2}$ \\
$496$ & $\frac{49}{16}$ & $-\frac{9}{16}$ & $16$ &
$\frac{59}{2}$ \\
$496$ & $\frac{25}{8}$ & $-\frac{5}{8}$ & $16$ & $31$ \\
$496$ & $\frac{51}{16}$ & $-\frac{11}{16}$ & $16$ &
$\frac{65}{2}$ \\
  $598$ & $\frac{5}{2}$ & $\frac{1}{4}$ & $18$ & $18$ \\
$1118$ & $\frac{7}{2}$ & $\frac{1}{4}$ & $26$ & $26$ \\
$1194$ & $\frac{9}{2}$ & $-\frac{1}{4}$ & $30$ & $36$ \\
  $1298$ & $\frac{5}{2}$ & $\frac{3}{4}$ & $22$ & $22$ \\
$1640$ & $\frac{5}{2}$ & $\frac{13}{16}$ & $\frac{45}{2}$
& $\frac{45}{2}$ \\
$2323$ & $\frac{5}{2}$ & $\frac{7}{8}$ & $23$ & $23$ \\
$2778$ & $\frac{7}{2}$ & $\frac{3}{4}$ & $30$ & $30$ \\
$3599$ & $\frac{7}{2}$ & $\frac{13}{16}$ & $\frac{61}{2}$
& $\frac{61}{2}$ \\
  $4371$ & $\frac{5}{2}$ & $\frac{15}{16}$ & $\frac{47}{2}$
& $\frac{47}{2}$ \\
$5239$ & $\frac{7}{2}$ & $\frac{7}{8}$ & $31$ & $31$ \\
                     \hline
                   \end{tabular}
\end{tabular}
\caption{Full dataset for imprimitive representations. This does not include the one infinite family that we shall treat separately in Section \ref{s:quadraticfamily}. Also, the same remark on integrality as in Figure \ref{f:full57} applies here.}
\label{f:full2}
\end{figure}

\section{Trimming down to Theorem \ref{thmmain}}
\label{s:sieve}
Most of the potential examples that are tabulated in Figures \ref{f:full57} and \ref{f:full2} do not in fact correspond to a VOA satisfying the conditions of Theorem \ref{thmmain}. In this Section we explain how to trim these lists down to the statement of Theorem \ref{thmmain}.

First we shall use the deep fact, which follows by Huang \cite{Huang}, that $\rho(S)$ must be a \emph{symmetric} matrix, with $\rho(T)$ diagonal. Since $\rho(T)$ has distinct eigenvalues (cf. Section \ref{s:monodromy}), and the only invertible matrices that commute with a diagonal matrix with distinct eigenvalues are the diagonal matrices, the only remaining freedom in changing the basis is in conjugating by diagonal matrices. Since we wish to keep the coordinate $f_0$ fixed, this conjugation amounts to rescaling $f_1$ and $f_2$. Said differently, there is \emph{at most one choice of scalars} $A_1$ and $A_2$ appearing in the definition of $f_1$ and $f_2$ such that $\rho(S)$ is symmetric. Since $A_1$ and $A_2$ must themselves be integers, we performed a numerical computation in all of the finitely many remaining cases (with $y \neq -1/2$) to symmetrize $\rho(S)$ and compute exact values for $A_1$ and $A_2$. The idea was to numerically evaluate $F(\tau)$ and $F(-1/\tau)$ at random points $\tau$ near $i$, and by comparing the results we obtained a numerical expression for $\rho(S)$ to high enough precision to determine when $A_1$ and $A_2$ were nonnegative integers. Note that the hypergeometric expression for $F(\tau)$ is very well-suited to this type of high precision computation. Also, since we used finite precision computation, we could only rule out exactly when $A_1$ and $A_2$ are \emph{not} integers. The exact values for $A_1$ and $A_2$ that we report here are then justified since in each case we produce examples of VOAs realizing them.

After computing values for $A_1$ and $A_2$, we were then able to test the integrality of the first thousand coefficients of all three coordinates of $F(\tau)$, whereas previously we had only been able to make use of the integrality of the first coordinate $f_0$. This cut our list of possible character vectors down very dramatically. We then checked the remaining cases to verify that the Verlinde formula holds for $\rho(S)$. After all of this work, we found the following exhaustive list of sets of data that could possibly correspond to a VOA as in Theorem \ref{thmmain}:
\begin{enumerate}
\label{examples1}
\item examples corresponding to solutions of \eqref{eq1} with $y = -1/2$;
\item $(m,c,h_1,h_2) = (0,-68/7,-2/7,-3/7)$ which is realized by $\Vir(c_{2,7})$;
\item $(m,c,h_1,h_2) = (24,4,2/5,3/5)$ which is realized by $A_{4,1}$;
\item $11$ exceptional cases with $y=1/2$, equivalently, $h_1 = 3/2$.
\end{enumerate}
\begin{dfn}
  \label{d:useries}
  The $11$ exceptional examples with $h_1 = 3/2$ comprise the \emph{$U$-series}.
\end{dfn}
We shall discuss the $U$-series in greater detail in Section \ref{SU}.

Figure \ref{f:full3} on page \pageref{f:full3} lists data for the $U$-series, and Figure \ref{f:fouriercoeffs} on page \pageref{f:fouriercoeffs} lists the first several Fourier coefficients of $f_0$, $f_1$ and $f_2$ for the examples in the $U$-series. For convenience we recall here the formulas for the character vector $F(\tau) = (f_0,f_1,f_2)^T$ in terms of the parameters $h_1 = x+1$, $h_2 = y+1$ and $c = 8(h_1+h_2)-4$:
\begin{align*}
  f_0 &= j^{\frac{c}{24}}{}_3F_2\left(-\tfrac{c}{24},\tfrac{8-c}{24},\tfrac{16-c}{24};1-h_1,1-h_2;\tfrac{1728}{j}\right),\\
  f_1 &= A_1j^{\frac{2h_2-4h_1-1}{6}}{}_3F_2\left(\tfrac{4h_1-2h_2+1}{6},\tfrac{4h_1-2h_2+3}{6},\tfrac{4h_1-2h_2+5}{6};h_1,h_1-h_2;\tfrac{1728}{j}\right),\\
  f_2 &= A_2j^{\frac{2h_1-4h_2-1}{6}}{}_3F_2\left(\tfrac{4h_2-2h_1+1}{6},\tfrac{4h_2-2h_1+3}{6},\tfrac{4h_2-2h_1+5}{6};h_2,h_2-h_1;\tfrac{1728}{j}\right).
\end{align*}
Further, for the examples in the $U$-series $\rho(T) = \exp\left(2\pi i \diagg(-\frac{c}{24}, \frac{4h_1-2h_2+1}{6},\frac{4h_2-2h_1+1}{6})\right)$ and 
\[
\rho(S) = \frac 12\left(\begin{matrix}
     1&1&\sqrt{2}\\
     1&1&-\sqrt{2}\\
     \sqrt{2}& -\sqrt{2}&0
   \end{matrix}\right).
\]

In Section \ref{s:quadraticfamily} we shall discuss the existence of VOAs for the infinite family of solutions to Equation \eqref{eq1} with $y = -1/2$, and in Section \ref{SU} we provide some more detail about the $U$-series.

\begin{figure}[h!]
   \renewcommand{\arraystretch}{2}
\begin{tabular}[t]{|c|c|c|c|c|c|c|c|}
  \hline
  $m$ & $h_1$ & $h_2$ & $c$ & $\widetilde{c}$& $V_1$ & $A_1$ & $A_2$\\
  \hline
  $0$ & $\frac 32$ &$\frac{31}{16}$ & $\frac{47}{2}$ &$\frac{47}{2}$& $0$ &$4371$&$96256$\\
  \hline
  $45$ &$\frac 32$ & $\frac{29}{16}$ & $\frac{45}{2}$& $\frac{45}{2}$& $D_5$ *&$4785$&$46080$\\
    \hline
  $86$ & $\frac 32$ &$\frac{27}{16}$ & $\frac{43}{2}$& $\frac{43}{2}$&$2B_4\oplus G_2$ * &$5031$&$22016$\\
    \hline
  $123$ & $\frac 32$ &$\frac{25}{16}$ & $\frac{41}{2}$& $\frac{41}{2}$&$A_1\oplus A_{10}$ * &$5125$&$10496$\\
    \hline
  $156$ & $\frac{3}{2}$ & $\frac{23}{16}$ & $\frac{39}{2}$& $\frac{39}{2}$& $2B_{6}$ *&$5083$& $4992$\\
    \hline
  $185$ & $\frac{3}{2}$ & $\frac{21}{16}$ & $\frac{37}{2}$& $\frac{37}{2}$&$E_7\oplus F_4$ * &$4921$&$2368$\\
    \hline
  $210$ & $\frac{3}{2}$ & $\frac{19}{16}$ & $\frac{35}{2}$& $\frac{35}{2}$ &$B_{10}$ *&$4655$&$1120$\\
    \hline
  $231$ & $\frac{3}{2}$ & $\frac{17}{16}$ & $\frac{33}{2}$& $\frac{33}{2}$& $D_{11}$ * &$4301$&$528$\\
    \hline
  $248$ & $\frac{3}{2}$ & $\frac{15}{16}$ & $\frac{31}{2}$& $\frac{31}{2}$&$E_8$ &$3875$&$248$\\
      &&&&&$A_1\oplus D_{11}\oplus G_2$ &&\\
    \hline
  $261$ & $\frac{3}{2}$ & $\frac{13}{16}$ & $\frac{29}{2}$& $\frac{29}{2}$&$A_2\oplus B_{11}$ *&$3393$&$116$ \\
    \hline
  $270$ & $\frac{3}{2}$ & $\frac{11}{16}$ & $\frac{27}{2}$& $\frac{27}{2}$& $A_2\oplus E_8\oplus G_2$ &$2871$&$54$\\
      &&&&&$\CC\oplus B_3\oplus E_8$&&\\
    &&&&&$\CC\oplus C_3\oplus E_8$&&\\
                    \hline
\end{tabular}
\caption{Data for the $U$-series. The expressions for $V_1$ are far from unique in general. In the cases where there are at most three possibilities for $V_1$, we have listed all of them. In all other cases there are several (in some cases thousands) of possibilities and we have only listed one of them, along with an asterisk. }
\label{f:full3}
\end{figure}

\begin{figure}[h!]
   \renewcommand{\arraystretch}{1.2}
\begin{tabular}[t]{|c|c|c|}
  \hline
  $h_1$ & $h_2$ & $a_0,a_1,a_2,\ldots$\\
  \hline
   $\frac 32$ &$\frac{31}{16}$ & $1, 0, 96256, 9646891, 366845011, 8223700027, 130416170627,\ldots$ \\
  &&$4371, 1143745, 64680601, 1829005611, 33950840617, 470887671187,\ldots$\\
  &&$96256, 10602496, 420831232, 9685952512, 156435924992,1958810851328,\ldots $\\
  \hline
  $\frac 32$ & $\frac{29}{16}$ & $1, 45, 90225, 7671525, 260868780, 5354634636, 78809509455,\ldots$\\
  &&$4785, 977184, 48445515, 1241925725, 21267996075, 275102618220,\ldots$\\
  &&$46080, 5161984, 199388160, 4423680000, 68709350400, 827293870080,\ldots$\\
    \hline
  $\frac 32$ &$\frac{27}{16}$ & $1, 86, 82775, 5989341, 182136390, 3421630228, 46706033862,\ldots$\\
    &&$5031, 819279, 35627220, 827820606, 13070793291, 157564970907,\ldots$\\
    &&$22016, 2515456, 94360576, 2013605376, 30017759232, 346922095616,\ldots$\\
    \hline
  $\frac 32$ &$\frac{25}{16}$ & $1, 123, 74374, 4586752, 124739876, 2143484264, 27115530974,\ldots $\\
   &&$5125, 673630, 25702490, 541136245, 7872255635, 88368399005, 816197168410,\ldots$\\
   &&$10496, 1227008, 44597504, 913172992, 13037354496, 144348958464,\ldots$\\
    \hline
  $\frac{3}{2}$ & $\frac{23}{16}$ & $1, 156, 65442, 3442179, 83713890, 1314851889, 15401260043, 145567687044,\ldots$\\
  &&$5083, 542685, 18172323, 346513193, 4640683320, 48464931804, 419554761418,\ldots$\\
  &&$4992, 599168, 21046272, 412414080, 5625756032, 59548105344, 520893998976,\ldots$\\
    \hline
  $\frac{3}{2}$ & $\frac{21}{16}$ & $1, 185, 56351, 2528691, 54987069, 788715865, 8545883340, 75369712213,\ldots$\\
  &&$4921, 427868, 12578261, 217080369, 2673896760, 25953557278, 210363766807\ldots$\\
  &&$2368, 292928, 9914816, 185395456, 2410143296, 24333700608, 203337098176\ldots$\\
    \hline
  $\frac{3}{2}$ & $\frac{19}{16}$ & $1, 210, 47425, 1816325, 35302155, 461945596, 4624903605, 38016539200,\ldots$\\
  &&$4655, 329707, 8512950, 132853700, 1503485200, 13547531620, 102694766167,\ldots$\\
  &&$1120, 143392, 4661440, 82908000, 1024273600, 9839831680, 78373048544,\ldots$\\
    \hline
  $\frac{3}{2}$ & $\frac{17}{16}$ & $1, 231, 38940, 1274086, 22116963, 263714253, 2436524530, 18642807645,\ldots$\\
  &&$4301, 247962, 5625708, 79296041, 823487514, 6879624345, 48709339624,\ldots$\\
  &&$528, 70288, 2186448, 36857568, 431399936, 3932664912, 29784812640, \ldots$\\
    \hline
  $\frac{3}{2}$ & $\frac{15}{16}$ & $1, 248, 31124, 871627, 13496501, 146447007, 1246840863, 8867414995, \ldots$\\
  &&$3875, 181753, 3623869, 46070247, 438436131, 3390992753, 22393107641,\ldots$\\
  &&$248, 34504, 1022752, 16275496, 179862248, 1551303736, 11142792024, \ldots$\\
    \hline
  $\frac{3}{2}$ & $\frac{13}{16}$ & $1, 261, 24157, 580609, 8004754, 78925762, 618182705, 4079878514, \ldots$\\
  &&$3393, 129688, 2270671, 25996789, 226351177, 1618088408, 9950251364, \ldots$\\
  &&$116, 16964, 476876, 7131680, 74132236, 602971480, 4095721620, \ldots$\\
    \hline
  $\frac{3}{2}$ & $\frac{11}{16}$ & $1, 270, 18171, 375741, 4602852, 41167332, 296065548, 1809970083, \ldots$\\
  &&$2871, 89991, 1380456, 14210922, 112987953, 745155153, 4259274975,\ldots$\\
  &&$54, 8354, 221508, 3097278, 30156048, 230475996, 1475743590, 8240806224,\ldots$\\
    \hline
\end{tabular}
\caption{Fourier coefficients for the characters of the $U$-series.}
\label{f:fouriercoeffs}
\end{figure}

\section{Solutions with $y = -1/2$}
\label{s:quadraticfamily}

We turn now to the solutions of \eqref{eq1} with $y=-1/2$. Recall that equation \eqref{eq1} specializes to
\[
  x(128x^2+248x-m+120) = 0,
\]
and we can ignore the solutions $(m,0,-1/2)$ since they correspond to reducible monodromy representations (cf. Section \ref{s:monodromy}). Therefore we now study the solutions
\[
  (m,x,y) = \left(\tfrac{s(s-1)}{2}, \tfrac{s}{16}-1,-\tfrac{1}{2}\right)
\]
where $s> 0$ is an integer that is not divisible by $8$. The restriction $s > 0$ arises from the fact that $m = \dim V_1$ must be a nonnegative integer, and the restriction that $8$ does not divide $s$ is due to the irreducibility of the monodromy cf. Section \ref{s:monodromy}. The main result of this section, whose proof occupies the remainder of the section, classifies exactly what VOAs satisfying the restrictions of Theorem \ref{thmmain} correspond to these examples:
\begin{thm}
  \label{thmquad1}
 Suppose that $(m,x, y)= \left(\tfrac{s(s-1)}{2}, \tfrac{s}{16}-1,-\tfrac{1}{2}\right)$ for an integer $s > 0$ not divisible by $8$. Then $V$ is isomorphic to one of the following:
 \[
   B_{\ell,1},~ A_{1,2},~ \Vir(c_{3,4}).
 \]
\end{thm}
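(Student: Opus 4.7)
The parameters are $c = s/2$, $h_1 = s/16$, $h_2 = 1/2$, and $\dim V_1 = s(s-1)/2$. Since $h_1, h_2 > 0$ we have $\tilde c = c$, and $2c = s \in \ZZ$, so Corollary \ref{corc/2} applies and gives three cases. The plan is to exclude case (c) outright, to identify $V$ via case (b) (which requires $s$ odd), and to rule out case (a) for all even $s$ with $8 \nmid s$ by a Lie-algebraic dimension count.

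In case (c), $V \cong V_\Lambda$ with $\rank \Lambda = s/2$, and three simple modules forces $|\Lambda^*/\Lambda| = 3$; the two nontrivial modules then come from cosets $\pm\alpha + \Lambda$ and necessarily share the same conformal weight, so $h_1 = h_2$, giving $s = 8$. But $s = 8$ is excluded by irreducibility of the monodromy (Proposition \ref{imprimdenoms}). In case (b), $c - \ell = 1/2$ forces $s$ odd with $\ell = (s-1)/2$, and by the proof of Theorem \ref{thmctildec} $V$ conformally contains a subVOA $V_\Lambda \otimes \Vir(c_{3,4})$ of rank $\ell$. For $s = 1$ this gives $V = \Vir(c_{3,4})$ directly. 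For $s = 3$, $\dim V_1 = 3$ at rank $1$ forces $V_1 \cong \mathfrak{sl}_2$, and matching the conformal weights of $L_{\mathfrak{sl}_2, k}$-modules (namely $j(j+1)/(k+2)$) to the required set $\{0, 3/16, 1/2\}$ pins down $k = 2$, giving $V \supseteq A_{1,2}$ conformally and then $V = A_{1,2}$. For $s = 2\ell+1 \geq 5$, $\dim V_1 = \ell(2\ell+1)$ at rank $\ell$ leaves the possibilities $V_1 \in \{\mathfrak{so}(2\ell+1), \mathfrak{sp}(2\ell)\}$ together with $E_6$ in the single extra case $\ell = 6$; the central-charge bound $c(L_{\mathfrak g, k}) \leq c_V$ forces $k = 0$ (hence $V_1 = 0$, a contradiction) for $\mathfrak{sp}(2\ell)$ with $\ell \geq 3$, and for $\ell = 6$, $V_1 = E_6$ is incompatible with the required conformal weight $h_1 = 13/16$, which cannot be realized as the lowest weight of any module of the would-be subVOA $L_{E_6,1} \otimes \Vir(c_{3,4})$. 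This leaves $V_1 \cong \mathfrak{so}(2\ell+1)$ at level $k = 1$, and $V = B_{\ell,1}$.

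The main obstacle is case (a), $c - \ell \geq 1$: one must rule this out for every positive even $s$ with $8 \nmid s$. Setting $\ell_0 = s/2 - 1$, we must show that no reductive Lie algebra has rank $\leq \ell_0$ and dimension $(\ell_0+1)(2\ell_0+1)$. The strategy is a tabulation of reductive Lie algebras by rank: for $\ell_0 \geq 12$ the maximum dimension at rank $\leq \ell_0$ is $\dim B_{\ell_0} = \ell_0(2\ell_0+1) < (\ell_0+1)(2\ell_0+1)$, so no such Lie algebra exists; for small $\ell_0 \leq 7$, a finite enumeration of rank-$\leq \ell_0$ simple and semisimple Lie algebras combined with abelian factors verifies that the value $(\ell_0+1)(2\ell_0+1)$ is never attained. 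The delicate transitional regime is $\ell_0 \in \{8, 9, 10\}$, where $E_8$-based reductive algebras compete with the $B_{\ell_0}$-series; here one checks that the target dimensions $153$, $190$, $231$ all fall into gaps between the maximal non-$E_8$ rank-$\ell_0$ dimension and the minimal dimension of an $E_8$-containing rank-$\ell_0$ reductive algebra. This bookkeeping around $E_8$ is the technically most intricate part of the proof.
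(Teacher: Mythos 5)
Your plan follows the paper's trichotomy from Corollary \ref{corc/2}, and your case (c) argument --- using that the two nontrivial cosets in $\Lambda^*/\Lambda$ are $\pm\alpha+\Lambda$, whence $h_1=h_2$ --- is a clean shortcut past the paper's longer Levi-decomposition argument in Proposition \ref{propc}. But there are two real gaps. First, you rule out case (a) only for even $s$, apparently on the assumption that odd $s$ forces case (b); in fact for odd $s$ the Lie rank $\ell$ of $V_1$ is a priori any integer $\leq (s-1)/2$, so $\ell\leq (s-3)/2$ (hence case (a)) is possible and must also be eliminated by the dimension count (the same method works, but it is not carried out). Second, and more seriously, your case (b) claim that rank $\ell$ and $\dim V_1 = 2\ell^2+\ell$ force $V_1\in\{B_\ell,\,C_\ell,\,E_6\ (\ell=6)\}$ is false: $F_4\oplus A_1$ has rank $5$ and dimension $55=2\cdot 25+5$; $E_7\oplus A_1$ has rank $8$ and dimension $136=2\cdot 64+8$; and $\CC^2\oplus A_1\oplus E_8$ has rank $11$ and dimension $253=2\cdot 121+11$. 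Excluding these and the larger family of candidates with exceptional or abelian summands is exactly the lengthy reductive-Levi case analysis (via equation \eqref{ineqx}) that occupies most of the paper's Proposition \ref{propb}; it cannot be replaced by an appeal to simple Lie algebras alone.

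A further caveat: the central-charge bound $c(L_{\mathfrak{g},k})\leq c_V$ that you use to force $k=1$ for $B_\ell$ and to eliminate $C_\ell$ requires the commutant $C(\langle V_1\rangle)$ to have nonnegative central charge. That holds in unitary theories (GKO) but is not established here, and strongly regular VOAs may have negative central charge, e.g.\ $\Vir(c_{2,7})$. The paper circumvents this with a graded-dimension majorization: the character of $V$ is pinned down and equals that of $B_{\ell,1}$, so $\dim\langle V_1\rangle_2 \leq \dim V_2 = \dim(B_{\ell,1})_2$, which together with Theorem \ref{thmBC} rules out $C_{\ell,k}$ and forces level $1$ for the $B$-series. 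Your conformal-weight-matching heuristic at $\ell=1$ has the same soft spot, since the conformal weights of simple $V$-modules are sums of weights over $\langle V_1\rangle\otimes C(\langle V_1\rangle)$, not weights of $\langle V_1\rangle$-modules alone.
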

Remembering that $c=8\left(h_1+h_2-\tfrac{1}{2}\right)=8\left(x+y+\tfrac{3}{2}\right)$ we have $c = \tilde c = s/2$. In particular we have $c{\in}\tfrac{1}{2}\mathbf{Z}$, so that Corollary \ref{corc/2} applies. Our approach to the proof of Theorem \ref{thmquad1} is to deal separately with each of the possibilities (a)-(c) of Corollary \ref{corc/2}, although the arguments are similar in each case. We try to determine the structure of the Lie algebra $V_1$, or else prove that there is no choice of $V_1$ that is compatible with the data. A basic property \cite{DMRational} is that $V_1$ is \emph{reductive} and its Lie rank is denoted by $\ell$ (cf.\ Subsection \ref{SScl}). As for case (a), we will prove
\begin{prop}
  \label{propa}
  For $(m,x, y)$ as in Theorem \ref{thmquad1}, Corollary \ref{corc/2}(a) \emph{cannot} hold.
\end{prop}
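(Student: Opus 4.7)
I will assume we are in case (a) of Corollary~\ref{corc/2}, namely $\tilde{c} - \ell \geq 1$, and derive a contradiction by showing that the resulting constraints on $\dim V_1$ and $\rank V_1$ are incompatible with the classification of reductive Lie algebras. Since the three conformal weights $h_0 = 0$, $h_1 = s/16$ and $h_2 = 1/2$ are all nonnegative, $h_{\min} = 0$ and hence $\tilde{c} = c = s/2$. Case (a) then becomes $\ell \leq (s-2)/2$, equivalently $s \geq 2\ell + 2$, and combined with $\dim V_1 = m = s(s-1)/2$ this yields the key lower bound
\[
\dim V_1 \;\geq\; \tfrac{(2\ell+2)(2\ell+1)}{2} \;=\; (\ell+1)(2\ell+1) \;>\; \ell(2\ell+1).
\]

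Next I would bound $\dim V_1$ from above using structure theory. Write $V_1 = \mathfrak{a} \oplus \bigoplus_i \mathfrak{g}_i$ as a sum of an abelian summand and simple summands of rank $\ell_i$. For classical types one has $\dim \mathfrak{g}_i \leq \ell_i(2\ell_i + 1)$, while the five exceptional types $G_2, F_4, E_6, E_7, E_8$ (of dimensions $14, 52, 78, 133, 248$ at ranks $2, 4, 6, 7, 8$) exceed this bound only at their specific small ranks. An explicit comparison shows that for $\ell \geq 12$ the maximum $D(\ell)$ of $\dim V_1$ over reductive Lie algebras of rank $\ell$ equals $\ell(2\ell + 1)$ (achieved by $B_\ell$ or $C_\ell$), which already contradicts the lower bound above. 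For the finitely many remaining cases $\ell \leq 11$, one runs a direct enumeration: for each such $\ell$, list the integers $s$ with $s \geq 2\ell + 2$, $8 \nmid s$, and $(\ell+1)(2\ell+1) \leq \tfrac{s(s-1)}{2} \leq D(\ell)$, and verify that the required dimension $\tfrac{s(s-1)}{2}$ is not among the values achievable by any rank-$\ell$ reductive Lie algebra built from classical and exceptional simple summands together with abelian factors.

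\textbf{The main obstacle} is the case analysis for $\ell \in \{8, 9, 10, 11\}$, where $E_7$ and especially $E_8$ create reductive structures substantially exceeding $\ell(2\ell+1)$. The key observation is that these exceptional contributions produce explicit \emph{gaps} in the set of achievable rank-$\ell$ dimensions. For example at $\ell = 8$ the achievable dimensions jump from the maximum non-$E_8$ value of $136$ (attained by $B_8$, $C_8$, or $E_7 \oplus A_1$) directly to $\dim E_8 = 248$, and the values $\tfrac{s(s-1)}{2} \in \{153, 171, 190, 210, 231\}$ for $s \in \{18, 19, 20, 21, 22\}$ all fall strictly inside this gap, while $s \geq 23$ forces $\tfrac{s(s-1)}{2} > 248$. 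Completing this finite check, together with analogous gap analyses at $\ell = 4, 9, 10, 11$, yields the contradiction and hence the proposition.
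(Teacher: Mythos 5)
Your proposal is a correct alternative route to the same conclusion. The starting point is identical to the paper's: both establish the lower bound $\dim V_1 = s(s-1)/2 \geq (\ell+1)(2\ell+1) = 2\ell^2+3\ell+1$ (the paper's Lemma~\ref{lemineqs}). Where you diverge is in the subsequent case analysis: the paper iteratively sharpens inequalities on the Levi decomposition to force at least one component of type $F_4$, $E_7$, or $E_8$, then whittles down the Levi factor to a short explicit list, and finally dispatches each entry using the triangular-number constraint $\dim V_1 = s(s-1)/2$ (often combined with $\tilde c - \ell \geq 1$). You instead bound the rank globally by observing that once $\ell \geq 12$ the maximum achievable dimension is $\ell(2\ell+1)$, which already falls short of the required lower bound, and then reduce to a finite enumeration. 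Both routes work; yours is cleaner to state and mechanically verifiable, while the paper's reveals more about the structural constraints on $V_1$.

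Two corrections are needed. First, the assertion that $D(\ell) = \ell(2\ell+1)$ for all $\ell \geq 12$ is true but not trivial: one must check that the excess $M(r) - (2r^2+r)$ of an exceptional factor over $B_r$ (which is $112$ for $E_8$, $28$ for $E_7$, $16$ for $F_4$, $4$ for $G_2$, $0$ for $E_6$) never compensates for the cross terms $4\sum_{i<j} r_i r_j + 4\ell_0\sum r_i + 2\ell_0^2$ that a direct sum loses relative to a single $B_\ell$; this is the same kind of estimate the paper makes. Second, your list of residual values of $\ell$ is slightly off. The values of $\ell$ for which $D(\ell) \geq (\ell+1)(2\ell+1)$ are $\ell\in\{4,7,8,9,10\}$, not $\{4,8,9,10,11\}$: at $\ell = 11$ the cutoff already applies since $D(11) = \dim(E_8 \oplus B_3) = 269 < 276 = 12\cdot 23$; at $\ell = 7$ the only admissible $s$ is $16$ (since $s \geq 16$ and $s(s-1)/2 \leq D(7) = 133$ forces $s = 16$), and $8 \mid 16$ rules it out, so no gap analysis is needed there either. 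With those two adjustments the sketch is sound, but the finite enumeration at $\ell = 4, 9, 10$ should still be written out for completeness, as you did for $\ell = 8$.
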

\begin{proof}
Until further notice we assume that the Proposition is false.

\begin{lem}
\label{lemineqs}
  We have
\[
2\ell^2+3\ell+1 \leq m.   
\]
\end{lem}
\begin{proof}
  Because we are assuming that (a) of Corollary \ref{corc/2} holds, then $\tilde{c} \geq \ell+1$. Therefore,
\[
m=\tfrac{s(s-1)}{2}=\tilde c(2\tilde{c}-1)\geq(\ell+1)(2\ell+1)=2\ell^2+3\ell+1.
\]
\end{proof}

As a reductive Lie algebra, $V_1$ has a direct sum decomposition
\begin{equation}
\label{Levi}
V_1=\bigoplus_{i\geq 0} \mathfrak{g}_i
\end{equation}
where $\mathfrak{g}_0$ is abelian and each $\mathfrak{g}_i$ is a nonabelian simple Lie algebra ($i \geq 1$), say of Lie rank $\ell_i$. Let $\ell_0\df\dim\mathfrak{g}_0$.Then the total Lie rank of $V_1$ is $\ell{=}\sum_{i\geq 0} \ell_i$.

The table of dimensions for simple Lie algebras compared with Lie rank is given in Table \ref{Table:DIML}.

 \begin{table}[htbp]
\begin{center}
\begin{tabular}{c|ccccccccccccc}
$\ell$&$1$&$2$&$3$&$4$&$5$&$6$&$7$&$8$&$9$&$10$&$\ell$\\
\hline
$A_\ell$&$3$&$8$&$15$&$24$&$35$&$48$&$63$&$80$&$99$&$120$&$\ell^2{+}2\ell$\\
$B_\ell$&$$&$10$&$21$&$36$&$55$&$78$&$105$&$136$&$171$&$220$&$2\ell^2{+}\ell$\\
$C_\ell$&$$&$$&$21$&$36$&$55$&$78$&$105$&$136$&$171$&$220$&$2\ell^2{+}\ell$\\
$D_\ell$&$$&$$&$$&$28$&$45$&$66$&$ 91$&$120$&$15$3&$190$&$2\ell^2{-}\ell$\\
\hline
$\CC$&$1$&&&&&&&&\\
$G_2$&&$14$&&&&&&&\\
$F_4$&&&&$52$&&&&&\\
$E_6$&&&&&&$78$&&&&\\
$E_7$&&&&&&&$133$&&&\\
$E_8$&&&&&&&&$248$&&\\
\end{tabular}
\end{center}
\caption{Ranks and Dimensions of simple Lie algebras}
\label{Table:DIML}
\end{table}

Now suppose that $V_1$ \emph{only} has components $\mathfrak{g}_i$ that are classical (type $A_{\ell},\ldots, D_{\ell}$) or of type $G_2$ or $E_6$. By Table \ref{Table:DIML} each of these satisfies $\dim\mathfrak{g}_i\leq 2\ell_i^2+3\ell_i$. Using Lemma \ref{lemineqs} we have
\[
  2\ell^2+3\ell+1\leq\dim V_1\leq\ell_0+\sum_{i\geq1}(2\ell_i^2+3\ell_i)\leq 3\ell+2\sum_{i\geq 1}\ell_i^2
\]
so that $\ell^2+\tfrac{1}{2}\leq\sum_{i\geq 1}\ell_i^2$, and this is impossible because each $\ell_i$ is a positive integer and $\ell$ is their sum. This shows, with a rather naive use of inequalities, that $V_1$ must have some component that is exceptional of type $F_4$, $E_7$ or $E_8$.

We will rework this argument. So essentially we backtrack because the inequalities can be improved as we gain more restrictions on the $\mathfrak{g}_i$. For any exceptional simple component $\mathfrak{g}_i$ of type $F_4$, $E_7$ or $E_8$ we write $\dim\mathfrak{g}_i=2\ell_i^2+\ell_i+e_i$ and let $1 \leq i \leq e$ index such components. Note that $e_i \leq 14 \ell_i$, with equality being met only if $\mathfrak{g}_1=E_8$. Then we have
\begin{align*}
2\ell^2+3\ell+1\leq&\ell_0+\sum_{i=1}^e (2\ell_i^2+\ell_i+e_i)+\sum_{i>e} \dim\mathfrak{g}_i \\
  \leq&\ell_0+\sum_{i=1}^e (2\ell_i^2+\ell_i+e_i)+\sum_{i>e}(2\ell_i^2+\ell_i)\\
  =&\ell+\sum_{i=1}^e e_i+2\sum_{i\geq1}\ell_i^2
\end{align*}
It follows that
\begin{eqnarray}
  \label{ineq6}
&&2\ell^2+2\ell+1 \leq \sum_{i=1}^e e_i+2\sum_{i\geq 1}\ell_i^2\\
\Rightarrow&&2\ell_0^2+4\sum_{0\leq i<j} \ell_i\ell_j+2\sum_{i\geq 0}\ell_i+1\leq\sum_{i=1}^e e_i\leq 14\sum_{i=1}^e\ell_i \notag\\
\Rightarrow&&(\ell_0^2+\ell_0)+2\sum_{0\leq i<j} \ell_i\ell_j<6\sum_{i=1}^e\ell_i\notag 
\end{eqnarray}

Because the possible exceptional components are $F_4$, $E_7$ and $E_8$, and because there is at least one of them, the \emph{minimum} of the $\ell_i\ (1 \leq i \leq e)$ is \emph{at least} $4$ and \emph{at most} $8$. Then the previous inequality implies that
\[
  (\ell_0^2+9\ell_0)+2\ell_1\sum_{2\leq j} \ell_j \ \   {+} 2\sum_{0\leq i < j, i\neq 1} \ell_i\ell_j   <6\ell_1+6\sum_{j=2}^e \ell_j
\]
and so
\begin{equation}
  \label{ineq5}
(\ell_0^2+9\ell_0)+(2\ell_1-6)\left(\sum_{j=2}^e \ell_j\right) + 2\ell_1\left(\sum_{j> e} \ell_j\right) + 2\sum_{0\leq i < j, i\neq 1} \ell_i\ell_j   <6\ell_1.
\end{equation}

From this we can deduce that
\[
\sum_{j>e}\ell_j  \leq2
\]
If the last displayed inequality is an equality then also
\[
(2\ell_1-6)\sum_{j=2}^e \ell_j<2\ell_1.
\]
In this case we claim that  $\sum_{j=2}^e \ell_j=0$. To see this, denote the sum by $\Sigma$. Then $\ell_1\left(\Sigma -1\right)<3\Sigma$ so that $\left(\Sigma-1\right)<\tfrac{3}{\ell_1}\Sigma$. But this is impossible if $\Sigma >0$ because $\ell_1\geq4$ and $\Sigma\geq 4$.

By a very similar argument, suppose that $\sum_{j>e}\ell_j =1$. This means that there is a unique component $A_1$ apart from those of types $F_4$, $E_7$, $E_8$. Moreover $\left(\Sigma-2\right)<\tfrac{3}{\ell_1}\Sigma \leq \tfrac{3}{4}\Sigma$, whence $\Sigma<8$. And if $\ell_1 \geq 7$ then $\tfrac{4}{7}\Sigma<2$, which is once again again impossible unless $\Sigma =0$. The conclusion is that if we have a  component of type $A_1$ and two exceptional components then we must have $\ell_1=4$. Since $\ell_1$ could have been chosen to be any of the exceptional Lie ranks, all of the exceptional components must be $F_4$.

We can argue similarly if all components are exceptional. In this case the main inequality (\ref{ineq5}) reads
\begin{equation}
\label{ineq8}
\tfrac{1}{2}(\ell_0^2+9\ell_0)+(\ell_1-3)\left(\sum_{j=2}^e \ell_j\right)+ \sum_{0\leq i < j, i\neq 1} \ell_i\ell_j   <3\ell_1
\end{equation}
and all of the $l_i$ are equal to $4$, $7$ or $8$. So if there are at least three components then $\sum_{j\geq 2} \ell_j\geq 8$ and we can deduce that $8(\ell_1-3)+16 <3\ell_1$, i.e., $5\ell_1<8$, a contradiction. Similarly if there are two components  and the second is \emph{not} $F_4$ we obtain $7(\ell_1-3)<3\ell_1$, whence $\ell_1\leq 5$ and the first component is $F_4$. So either way one of the two components must be $F_4$.

To summarize so far, we've shown that one of the following must hold for the semisimple part, that is the \emph{Levi factor} $L$ of $V_1$:
\begin{itemize}
\item $L=\mathfrak{g}_1$
\item $L=F_4\oplus\mathfrak{g}_1$
\item $L=A_1 \oplus \mathfrak{g}_1$
\item $L=A_1 \oplus F_4\oplus F_4$
\item $L=A_1\oplus A_1 \oplus \mathfrak{g}_1$
\item $L=A_2 \oplus \mathfrak{g}_1$
\end{itemize}
and in all cases $\mathfrak{g}_1$ is one of $F_4$, $E_7$ or $E_8$.

Now let's assume that there is no exceptional component of type $E_8$. Then $e_i\leq 4 \ell_i$ and $\sum_{i=1}^e\ell_i \leq 11$. Going back to (\ref{ineq6}) we obtain
\begin{eqnarray*}
&&2\ell_0^2+4\sum_{0\leq i<j} \ell_i\ell_j+2\sum_{i\geq 0}\ell_i+1\leq\sum_{i=1}^e e_i\leq 4\sum_{i=1}^e\ell_i \\
\Rightarrow&&(1+\ell_0^2+\ell_0)+2\sum_{0\leq i<j} \ell_i\ell_j +\sum_{i>e}\ell_i<\sum_{i=1}^e\ell_i\leq 11\\
\Rightarrow&&(\ell_0^2+\ell_0)+2\ell_0\sum_{1\leq i} \ell_i +2\ell_1\left(\sum_{i\geq 2} \ell_i\right) +\sum_{i>e}\ell_i<10.
\end{eqnarray*}
Therefore $\ell_1\sum_{i\geq 2} \ell_i\leq 4$, which can only happen if $\ell_1=4$ and $\sum_{i\geq 2}\ell_i = 1$, or if $\sum_{i\geq 2}\ell_i=0$. The latter equation means that $L$ is  simple. The first conditions mean that the first exceptional component is $F_4$, it is the only exceptional component, and if there are nonexceptional components they must comprise a single $A_1$. 

In the simple case (not $E_8$) we have $(\ell_0^2+\ell_0)+2\ell_0\ell_1 <10$ and because $\ell_1\geq 4$ then $\ell_0=0$. Observe, too, that if $V_1=E_8$ then $\ell =\tilde{c}=c$, an impossibility because we are assuming that $\tilde{c}>\ell$. If $E_8$ is the only component then the very first inequality $2\ell^2+3\ell+1\leq \dim V_1=\ell_0+248$ together with $\ell =\ell_0+8$ readily implies that $\ell_0\leq2$.

This allows us to refine the list of possibilities for $V_1$:
\begin{itemize}
\item $V_1=F_4$
\item $V_1=E_7$
\item $V_1=\CC^k\oplus E_8$, ($1\leq k \leq 2$)
\item $L=F_4\oplus E_8$
\item $L=A_1 \oplus F_4$
\item $L=A_1 \oplus E_8$
\item $L=A_1\oplus A_1\oplus E_8$
\item $L=A_2 \oplus E_8$
\end{itemize}

Here's another trick. We have $\dim V_1 = \tfrac{s(s-1)}{2}$ for a positive integer $s$. This eliminates all possibilities when $L$ is simple. Now we are obliged to look more closely at $\ell_0$. In the absence of an $E_8$ component the possibilities are $L=A_1 \oplus F_4$, so $\ell_1=4$, $\ell_2=1$ and the last displayed inequality implies that $(\ell_0^2+\ell_0)+10\ell_0+8+1 <10$, in which case $\ell_0=0$ and $\dim V_1=3+52=\tfrac{s(s-1)}{2}$ with $s=11$. Then $\tilde{c} = \tfrac{11}{2}$, $\ell=5$. But we are assuming that $\tilde{c}-\ell \geq 1$, a contradiction. Now we're reduced to the following possibilities with an $E_8$ component:
\begin{itemize}
\item $L=F_4\oplus E_8$
\item $L=A_1\oplus E_8$
\item $L=A_1\oplus A_1\oplus E_8$
\item $L=A_2\oplus E_8$.
\end{itemize}

In the first case we may apply (\ref{ineq8}). We have $\ell_1=4$, $\ell_2=8$, so
$\tfrac{1}{2}(\ell_0^2+9\ell_0)+8+8\ell_0<12$ is enough to force $\ell_0=0$. Therefore $V_1=F_4 \oplus E_8$ has dimension $52+248=300$, so $s=25$ and $c=\tilde{c} = \tfrac{25}{2}$, $\ell=12$. Once again this is outside of the scope of the case under consideration, so this case does not occur.

In the second case we utilize (\ref{ineq6}) together with $\ell_1 = 8$, $\ell_2=1$, $e_1=112$ to find that $2\ell_0^2+38\ell_0\leq 61$ and thus $\ell_0\leq 1$.
Then $\dim V_1 =251$ or $252$ and neither integer has the required form $\tfrac{s(s-1)}{2}$. So this case does not occur.

The fourth case is similar, except that $\ell_2=2$. Just as before this leads to $\ell_0=0$, so $\dim V_1=256$, which does not conform to $\tfrac{s(s{-}1)}{2}$, so this case does not occur.

For the third and final case we proceed similarly, but now with $\ell_1=8$, $\ell_2=\ell_3=1$. As before this leads to $\ell_0=0$, $\dim V_1=254$ which once again is not of the form $\tfrac{s(s-1)}{2}$. This completes the proof of Proposition \ref{propa}.
\end{proof}

Our next goal is the proof of
\begin{prop}
  \label{propc}
  Let $(m,x,y)$ be as in Theorem \ref{thmquad1}. Then Corollary \ref{corc/2}(c) cannot hold.
\end{prop}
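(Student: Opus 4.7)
The plan is to exploit the structural consequences of Case (c) to force $V$ to be a lattice VOA, and then derive a contradiction by inspecting the conformal weights of its simple modules. Under Case (c) of Corollary \ref{corc/2} we have $\tilde c = \ell = c = s/2$, and Theorem \ref{thmctildec} then forces $V \cong V_\Lambda$ for some positive-definite even lattice $\Lambda$ of rank $\ell$.

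Because $V_\Lambda$ has exactly three simple modules, its discriminant group $D := \Lambda^*/\Lambda$ must have order $3$, and hence $D \cong \ZZ/3\ZZ$. The simple $V_\Lambda$-modules are indexed by the cosets of $\Lambda$ in $\Lambda^*$, and for a coset $\gamma + \Lambda$ the conformal weight of the corresponding module $M_\gamma$ equals $\min\{(v,v)/2 : v \in \gamma + \Lambda\}$. Since $v \mapsto -v$ preserves norms and interchanges the cosets $\gamma + \Lambda$ and $-\gamma + \Lambda$, the modules $M_\gamma$ and $M_{-\gamma}$ have equal conformal weight. But when $|D| = 3$ the two non-trivial cosets are precisely $\gamma + \Lambda$ and $-\gamma + \Lambda$, so the two non-trivial simple modules share a single conformal weight; that is, $h_1 = h_2$.

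On the other hand, the parametrization at the start of this section gives $h_1 = x + 1 = s/16$ and $h_2 = y + 1 = 1/2$, so $h_1 = h_2$ forces $s = 8$. This contradicts the assumption that $8 \nmid s$ (which is in force because of the irreducibility of the monodromy, cf. Section \ref{s:monodromy}), and so Case (c) cannot occur.

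The only obstacle is a minor bookkeeping one, namely to cite the standard facts from lattice VOA theory that were used: the classification of simple $V_\Lambda$-modules by the discriminant group and the formula for their conformal weights. Everything else is a direct consequence of the specific form $h_1 = s/16,\ h_2 = 1/2$ of the conformal weights dictated by the parametrization $(m,x,y) = (s(s-1)/2,\,s/16-1,\,-1/2)$ together with the exclusion $8 \nmid s$.
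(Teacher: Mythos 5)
Your proof is correct, and it is genuinely different from — and considerably shorter than — the paper's.

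The paper's proof of Proposition \ref{propc} also begins by invoking Theorem \ref{thmctildec} to get $V\cong V_\Lambda$, but then proceeds very differently: it computes $|L_2| = 2\ell^2 - 2\ell$ from the $q$-character $\theta_\Lambda/\eta^\ell$ and the identity $m = 2\ell^2-\ell$, and then carries out a fairly long case analysis on the Levi decomposition of $V_1$ (parallel to the analysis in Proposition \ref{propa}), eventually showing that the only possibilities are $V\cong D_{\ell,1}$ or $A_{3,1}$, which are then ruled out because they have $4$ simple modules rather than $3$. You instead observe at once that $|\Lambda^*/\Lambda| = 3$ forces $\Lambda^*/\Lambda \cong \ZZ/3\ZZ$, so its two nontrivial cosets are $\pm\gamma + \Lambda$ and therefore (via the norm-preserving involution $v\mapsto -v$) the two nontrivial simple $V_\Lambda$-modules share the same conformal weight; this forces $h_1 = h_2$, i.e.\ $s/16 = 1/2$, contradicting $8\nmid s$. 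This is cleaner and makes the counting hypothesis (exactly three simple modules) do the work up front rather than at the tail end of a long root-system argument. Both proofs ultimately rest on the same two ingredients — the Dong--Mason lattice characterization and the parametrization of simple lattice modules by the discriminant group — but yours avoids the entire Levi-decomposition analysis. The only thing to add, as you note yourself, is a reference for the classification of irreducible $V_\Lambda$-modules and their conformal weights (e.g.\ Dong's theorem or \cite{LL}); with that citation in place, your argument is complete. One small optional streamlining: from $h_1 = h_2$ you could alternatively conclude directly that $\rho(T)$ has a repeated eigenvalue, contradicting the irreducibility hypothesis via Tuba--Wenzl, without ever appealing to the numerical values of $h_1$ and $h_2$; but using the explicit parametrization as you do is perfectly fine here.
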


\begin{proof}
  We are assuming here that $c=\tilde{c}=\ell=\tfrac{s}{2}$ so by a Theorem of Dong-Mason (cf. Theorem \ref{thmctildec}), we know that $V \cong V_L$ is a lattice theory for some even lattice $L$. Let the root system of $L$ be denoted by $L_2$. The $q$-character of $V$ is then the quotient of modular forms
\[
f_0(\tau)=\frac{\theta_L(\tau)}{\eta(\tau)^{\ell}}=q^{-\ell/24}(1+(\ell+\abs{L_2})q+\cdots ).
\]

We have  $m=\tfrac{s(s-1)}{2}=2\ell^2-\ell$. Therefore $\abs{L_2}=2\ell^2-2\ell$. Because $L$ is an even lattice then its root system is the direct sum of simple root systems of types ADE. Let $\mathfrak{g}_i$ ($1 \leq i \leq N$) be the nonabelian simple Lie algebra components of $V_1$, and let $\Phi_i$ be the root system of $\mathfrak{g}_i$, say of rank $\ell_i$. Then we have
 \begin{align*}
2\left(\sum_{r=1}^N \ell_r\right)^2-2\sum_{r=1}^N\ell_r=&2\ell^2-2\ell=\sum_{i=1}^N \abs{\Phi_i}=\sum_{i} \abs{\Phi_i}+\sum_{j} \abs{\Phi_j}+\sum_{k} \abs{\Phi_k}\\
=&\sum_{i} (\ell_i^2+\ell_i)+\sum_{j} (2\ell_j^2-2\ell_j)+\sum_{k} (2\ell_k^2-2\ell_k+f_k)\\
=&\sum_{i} (-\ell_i^2+3\ell_i) +\sum_{k} f_k-2\sum_{r=1}^N \ell_r +2\sum_{r=1}^N \ell_r^2
\end{align*}
where $\abs{\Phi_i}=\ell_i^2+\ell_i$, $2\ell_i^2-2\ell_i$, $72$, $126$, $240$ for $\mathfrak{g}_i$ of type $A_{\ell_i}$, $D_{\ell_i}$, $E_6$, $E_7$, $E_8$ respectively, and where we use $i, j, k$ to index the occurring root systems of type $A, D, E$ respectively. We also have $f_k\df 18, 42,128$ for $E_6, E_7, E_8$ respectively. Note that $f_k<16\ell_k$.

This begins to look like what we faced in the course of the proof of Proposition \ref{propa}, where we first made a relatively naive estimate, then backtracked. The previous displayed equality yields
\[
4\sum_{1\leq r< s\leq N} \ell_r\ell_s=-2\ell+\sum_{i} (-\ell_i^2+3\ell_i) +\sum_{k} f_k
\]
Therefore, \emph{if there is} a component of type $E$ then for some $\ell_k$, say with $k =N$, we have  $\ell_N \geq 6$ and
\begin{align*}
&4\ell_N\sum_{r<N} \ell_r <\sum_{i} (-\ell_i^2+3\ell_i) +\sum_{k} f_k\\
\Rightarrow&(4\ell_N-3)\sum_{i} \ell_i+\sum_{k<N} (4\ell_N\ell_k-f_k)< f_N-\sum_{i} \ell_i^2.
\end{align*}
Because the sum over $k <N$ is nonnegative we then have
\[
(4\ell_N-3)\sum_{i} \ell_i+\sum_i \ell_i^2< f_N
\]
and so
\[
  19\sum_{i} \ell_i+\left(\sum_i \ell_i\right)^2< f_N.
\]
Now we find that if $f_N=128$, then $\sum_i\ell_i{\leq}5$; if $f_N=42$ then $\sum_i\ell_i \leq 1$; and if $f_N=18$ then $\sum_i\ell_i=0$.

If $\sum_i\ell_i=0$ then there are no type $A$ components, and we then have
\[
4\sum_{1\leq r< s\leq N} \ell_r\ell_s<\sum_{k} f_k \quad \Rightarrow \quad 4f_N\sum_{1\leq r< N} \ell_r<\sum_{k} f_k,
\]
from which it follows easily that there is at most one nonzero type $E$ component. And if there are any of type $D$, then $4f_N\sum_j \ell_j< f_N$, contradiction. So we are reduced to the possibility that there is a single component, of type $E$. Then $240=2\ell^2-2\ell$, an impossibility. This shows that some $\ell_i>0$.

We have therefore shown that \emph{if} there is a type $E$ component, then there must also be at least one type $A$ component. Suppose there is a unique type $E$ component. Then
\[
4\ell_N\sum_{i} \ell_i<\sum_{i} (-\ell_i^2+3\ell_i) +f_N,
\]
and so $\sum_{i} (\ell_i^2+21\ell_i)< f_N$, forcing $f_N=42$ or $128$. If $f_N=42$ then necessarily $\{\ell_i\}=\{1\}$, i.e., there is a unique type $A$ component and it is $A_1$. Then $L_2=A_1\oplus E_7$ and $\abs{L_2}=128 \neq 2\ell^2-2\ell$. Suppose that $f_N = 128$. Then $\sum_i \ell_i \leq 4$ and $\abs{L_2}\in\{240, 242, 244, 246, 248, 250, 254, 260\}$, none of which are $2\ell^2{-}2\ell$. This shows that there are at least two type $E$ components. In this case we have
\[
  4\ell_N\sum_{i} \ell_i+4\sum_{k<k'}  \ell_k\ell_{k'}< \sum_{i} (-\ell_i^2+3\ell_i) +\sum_k f_k
\]
and so
\[
  \sum_{i} (\ell_i^2+21\ell_i)+4\sum_{k<k'} \ell_k\ell_{k'} < \sum_k f_k,
\]
and since each $\ell_k \geq 6$, and $4\ell_k\ell_{k'}>f_k$, then there can be no more than two type $E$ components. Moreover they are both of type $E_8$, whence
$\sum_{i} (\ell_i^2+21\ell_i)=0$. Hence $L_2=E_8 \oplus E_8$, $\abs{L_2}=480$, and $\ell =16$. Now $L = E_8 \oplus E_8$, in which case $V{=}V_L$ is \emph{holomorphic}, a contradiction.

We have finally shown that $V_1$ has  \emph{no} components of type $E$. So we have
 \begin{align*}
   2\left(\sum_{r=1}^N \ell_r\right)^2-2\sum_{r=1}^N\ell_r=&2\ell^2-2\ell\\
   =&\sum_{i} \abs{\Phi_i}+\sum_{j} \abs{\Phi_j}\\
=&\sum_{i} (\ell_i^2+\ell_i)+\sum_{j} (2\ell_j^2-2\ell_j)\\
=&\sum_{i} (-\ell_i^2+3\ell_i) -2\sum_{r=1}^N \ell_r +2\sum_{r=1}^N \ell_r^2,
\end{align*}
so that
\[
2\left(\sum_{r=1}^N \ell_r\right)^2 = \sum_{i} (-\ell_i^2+3\ell_i) +2\sum_{r=1}^N \ell_r^2
\]
and $4\sum_{1\leq r<s} \ell_r\ell_s=\sum_{i} (-\ell_i^2+3\ell_i)$. If there are no type $A$ component then the right hand side of this inequality vanishes, whence so does the left hand side, meaning that there is a unique component, and it has type $D$. Here, then, we have $V \cong D_{\ell,1}$. However, this VOA has $4$ simple modules if $l \geq 5$ and $2$ if $l = 4$. Thus this example does not occur. 
Suppose there are some type $A$ components. Then the last displayed inequality implies that such a type $A$ component is unique, call it $\mathfrak{g}_1$. Then
\[
0=4\ell_1\sum_{2\leq r} \ell_r=-\ell_1^2+3\ell_1,
\]
so $\ell_1=3$ and $V=V_L\cong A_{3,1}$. Once again, this VOA has $4$ simple modules so it does not occur. This completes the proof of the Proposition.
\end{proof}

The final case is:
\begin{prop}
  \label{propb}
  Assume that $(m,x, y)$ is as in Theorem \ref{thmquad1} and that  Corollary \ref{corc/2}(b) holds. Then $V \cong B_{\ell,1}$, $A_{1,2}$ or $\Vir(c_{3,4})$.
\end{prop}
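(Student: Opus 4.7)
The plan is to mimic the dimension-counting strategy of Propositions \ref{propa} and \ref{propc}, but refined by the stronger equation provided by case (b). Since $c=\tilde c=s/2$ and $\tilde c-\ell=\tfrac12$, we have $s=2\ell+1$ (in particular $s$ is odd) and
\[
\dim V_1 = m = \frac{s(s-1)}{2} = 2\ell^2+\ell.
\]
The case $\ell=0$ is immediate: then $V_1=0$ and $c=\tfrac12$; by the proof of Theorem \ref{thmctildec}, $V$ contains a conformal $\Vir(c_{3,4})$ subalgebra, and since $V_1=0$ this forces $V\cong\Vir(c_{3,4})$.

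For $\ell\geq 1$ we write $V_1=\mathfrak{g}_0\oplus\bigoplus_{i\geq 1}\mathfrak{g}_i$ with $\mathfrak{g}_0$ abelian of dimension $\ell_0$ and $\mathfrak{g}_i$ nonabelian simple of rank $\ell_i$. Table \ref{Table:DIML} gives $\dim\mathfrak{g}_i\leq 2\ell_i^2+\ell_i+e_i$, where $e_i\geq 0$ vanishes unless $\mathfrak{g}_i$ is of exceptional type $G_2,F_4,E_7,E_8$ (in which cases $e_i\leq 14\ell_i$). Using $\ell=\ell_0+\sum_i\ell_i$ and the \emph{equality} $\dim V_1=2\ell^2+\ell$, a direct expansion gives
\[
2\ell_0^2+4\ell_0\sum_i\ell_i+4\sum_{i<j}\ell_i\ell_j\leq\sum_i e_i.
\]
When no exceptional component of type $G_2,F_4,E_7,E_8$ is present the right-hand side vanishes, so $\ell_0=0$ and at most one $\mathfrak{g}_i$ is non-zero. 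Matching $\dim V_1=2\ell^2+\ell$ with the values in Table \ref{Table:DIML}, the only options are $V_1\in\{A_1,B_\ell,C_\ell,E_6\}$.

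Next I rule out $E_6$ and $C_\ell$ ($\ell\geq 3$) by a level computation: the affine VOA $\widehat{\mathfrak{g}}_k$ has Sugawara central charge $k\dim\mathfrak{g}/(k+h^\vee)$, and forcing this to equal $c=\ell+\tfrac12$ yields no positive integer $k$ in those cases (for $E_6$, one gets $k=\tfrac{156}{143}$; for $C_\ell$ with $\ell\geq 3$, the equation $(2\ell+1)(\ell-2)=0$ excludes $\ell\geq 3$, while $\ell=2$ is just $B_2$). The remaining possibilities $V_1\cong A_1$ and $V_1\cong B_\ell$ ($\ell\geq 2$) do admit integer levels ($k=2$ and $k=1$ respectively), yielding conformal subVOAs $A_{1,2}\subseteq V$ or $B_{\ell,1}\subseteq V$. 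Since $V$ is $\mathbf{Z}$-graded and the other simple modules of $A_{1,2}$ and $B_{\ell,1}$ have non-integral conformal weights ($\tfrac{3}{16},\tfrac12$ and $\tfrac12,\tfrac{2\ell+1}{16}$ respectively), they cannot appear in the decomposition of $V$ as a module over the affine subVOA, so $V$ equals this subVOA.

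The main obstacle is the remaining case: some $\mathfrak{g}_i$ is of exceptional type $G_2,F_4,E_7,E_8$. One checks first that a \emph{single} such component (together with $\ell_0\geq 0$) is ruled out by converting $\dim V_1=2\ell^2+\ell$ into an explicit Diophantine equation in $\ell_0$; e.g.\ for $E_8$ one gets $\ell_0^2+16\ell_0=56$, which has no integer solution, and similarly for $E_7,F_4,G_2$. Multiple components then make the key inequality above worse, and a bookkeeping argument patterned exactly on the proof of Proposition \ref{propa} (successively bounding $\sum\ell_j$, $\ell_0$, and the number of exceptional components using $e_i\leq 14\ell_i$, with equality only for $E_8$, and the constraint that $\dim V_1$ be a triangular number $\tfrac{s(s-1)}{2}$) disposes of all remaining configurations. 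This completes the proof.
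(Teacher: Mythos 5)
Your overall strategy mirrors the paper's: identify $\dim V_1=2\ell^2+\ell$, decompose $V_1$ as a reductive Lie algebra, bound the possibilities by expanding this dimension formula, and then upgrade from "$V_1$ is the right Lie algebra" to "$V\cong\mathcal{G}_{\ell,1}$" via the affine subVOA $\langle V_1\rangle$. The $\ell=0$ case, the reduction to $A_1,B_\ell,C_\ell,E_6$ when no exceptional component is present, and the single-exceptional-component Diophantine check are all correct. However, there are genuine gaps in the arguments that eliminate the remaining candidates.

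The most serious is the level computation for $E_6$. You impose $c_{E_{6,k}}=c_V=\tfrac{13}{2}$ and conclude from $k=\tfrac{156}{143}$ that $E_6$ is excluded. But $\langle V_1\rangle$ is not automatically a conformal subalgebra; the correct constraint is $c_{E_{6,k}}\leq c_V$, which does admit $k=1$ (with $c_{E_{6,1}}=6$ and a commutant of central charge $\tfrac12$). The paper must then rule out this possibility by observing that the conformal weights of the simple $E_{6,1}\otimes\Vir(c_{3,4})$-modules (namely $0,\tfrac12,\tfrac1{16},\tfrac23,\tfrac76,\tfrac{35}{48}$ mod $\mathbf{Z}$) cannot account for the weight $h_1=\tfrac{13}{16}$ of the simple $V$-module $M_1$. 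Your argument omits this entirely. The same flaw appears for $V_1\cong A_1$: the level inequality only gives $k\in\{1,2\}$, and you do not address $k=1$ (where the commutant would again be $\Vir(c_{3,4})$). The paper disposes of $k=1$ via the Majorization Theorem \ref{thmmajor}, by comparing the dimension sequence of $A_{1,1}\otimes\Vir(c_{3,4})$ with that of $A_{1,2}$; some argument of this kind is needed.

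Two smaller points. First, the "bookkeeping argument patterned exactly on Proposition \ref{propa}" for multiple exceptional components is only a promise, not a proof; note that the inequality $2\ell_0^2+4\ell_0\sum\ell_i+4\sum_{i<j}\ell_i\ell_j\leq\sum e_i$ by itself does not exclude, e.g., $E_8\oplus A_1$ (LHS $=32\leq 112=$ RHS); you need the exact dimension equality $\dim V_1=2\ell^2+\ell$ and a more careful case analysis, as the paper does. Second, the final step "so $V$ equals this subVOA" also implicitly uses that the $q$-character of $V$ is uniquely determined and agrees with that of $A_{1,2}$ (resp.\ $B_{\ell,1}$); otherwise the integrality of conformal weights only forces $V$ to be a direct sum of copies of the adjoint module. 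On the positive side, your central-charge bound $c_{C_{\ell,1}}>\ell+\tfrac12$ for $\ell\geq 3$ does correctly exclude $C_\ell$, and is in fact more elementary than the paper's route through Theorem \ref{thmBC}, provided one justifies $c_{\langle V_1\rangle}\leq c_V$.
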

\begin{proof}
 In this case we have $c=\tilde{c}=\tfrac{s}{2}$, $\ell = \tilde{c}-\tfrac{1}{2}=\tfrac{s{-}1}{2}$, $m=\tfrac{s(s{-}1)}{2}=2\ell^2+\ell$. 

Now we have seen that the $q$-character of $V$ (and that of its simple modules, too) is uniquely determined by this data. It follows that the $q$-character of $V$ is equal to that of one of the VOAs in the statement of the Proposition.

Suppose first that $\ell=0$. Then $s=1, \tilde{c}=c=\tfrac{1}{2}$, and by \cite{MasonLattice} Theorem 8, it follows that $V$ contains the Virasoro VOA $\Vir(c_{3,4})$ as a subVOA. However from the last paragraph $V$ has the same $q$-character as this Virasoro VOA and therefore they are equal. This proves the Proposition if $\ell=0$. Thus from now on we may, and shall, assume that $V_1\neq 0$. We would like to then show that $V_1$ is isomorphic to $B_{\ell}$, or $A_1$.

Suppose that  $\ell = 1$.  Then $m=3$ and $V_1 \cong A_1$. By \cite{DMIntegrability} the subVOA $U\df \langle V_1\rangle$ generated by $V_1$ is isomorphic to an affine algebra $A_{1, k}$ of some positive integral level $k$. Now we can use the \emph{majorizing Theorem} in Appendix \ref{a:affinealgebras} to see that because the $q$-character of $V$ is the same as that for $A_{1,2}$ by the first paragraph, then  $k \leq 2$, and if $k=2$ then  $U=V\cong A_{1,2}$. Suppose that $k=1$. Then the commutant $C$ of $U$ has central charge $\tfrac{1}{2}$. Now consider $U\otimes C$: it is a subVOA of $V$ and from what we have said it majorizes $A_{1, 1}\otimes \Vir(c_{3, 4})$ or is equal to it. But this latter VOA itself majorizes $A_{1,2}$ as one sees by a direct check of $q$-expansions, and this shows that the case $k=1$ does \emph{not} occur.

Now suppose that $\ell \geq 2$. By the first paragraph $V$ has the same $q$-character as $B_{\ell,1}$. If we can show that $V_1\cong B_{\ell}$ then the same arguments used in the previous paragraph show that $V\cong B_{\ell,1}$, and the Proposition will be proved.

 We can attack this much as we did in the proofs of Propositions \ref{propa} and \ref{propc}. Let $V_1$ have Levi decomposition \eqref{Levi}. Then $\ell=\ell_0+\sum_i \ell_i$. Let $\Phi_i$ be the root system of $\mathfrak{g}_i$. Then 
 \begin{eqnarray}
   \label{ineqx}
&&2\ell^2+\ell=2\left(\sum_{i\geq 0} \ell_i\right)^2+\left(\sum_{i\geq 0} \ell_i\right)=\dim V_1=\ell_0+\sum_{i\geq 1}(\ell_i+\abs{\Phi_i})\notag\\
\Rightarrow&&2\left(\sum_{i\geq 0} \ell_i\right)^2=\sum_{i\geq 1}\abs{\Phi_i}\notag\\
\Rightarrow&&2\ell_0^2+4\ell_0\sum_{i\geq 1}\ell_i +4\sum_{1\leq i < j}\ell_i\ell_j=\sum_{i\geq1} (\abs{\Phi_i}-2\ell_i^2).
\end{eqnarray}
Now $\abs{\Phi_i}-2\ell_i^2=\ell_i-\ell_i^2$; $0$; $-2\ell_i$; $6$; $20$; $6$; $35$; $120$ for types  $A_{\ell_i}$;  $B_{\ell_i}$ or $C_{\ell_i}$; $D_{\ell_i}$; $G_2$; $F_4$; $E_6$, $E_7$, $E_8$ respectively.

Suppose that the left hand side of \eqref{ineqx} is $0$. Then $\ell_0=0$, $V_1$ has a unique component, and it has type $B$ or $C$. If the type is $B$ then $V\cong B_{\ell,1}$ as we have already explained, so we are done in this case. If the type is \emph{not} $B$ then $V_1\cong C_{\ell}$ with $\ell \geq 3$. By Theorem 1.1 of \cite{DMIntegrability} it follows that the subVOA $U \df \langle V_1\rangle$ generated by $V_1$ is isomorphic to $C_{\ell,k}$ for some positive integral level $k$. Since $U$ is generated by weight $1$ states, a consideration of the conformal subVOA  $U \otimes C$, where $C$ is the commutant of $U$, shows that  $\dim U_2\leq \dim V_2$ and hence that $\dim (C_{l,k})_2 \leq \dim(B_{\ell,1})_2$. However this contradicts Theorem \ref{thmBC} in Appendix \ref{a:affinealgebras}. This proves Proposition \ref{propb} if the left hand side of \eqref{ineqx} is $0$.

This reduces us to consideration of the case that the left hand side of \eqref{ineqx} is \emph{positive}, so the right side is too. So there must be at least one  \emph{exceptional} component

Suppose there are $k$ components of type $E_8$, and $r$ exceptional components \emph{not} of type $E_8$. The right side of \eqref{ineqx} is at most $120k+35r$, whereas the left side of \eqref{ineqx} is at least $4(64{k\choose 2}+16kr+4{r\choose 2})$. Therefore
\begin{eqnarray*}
&& 16(k^2-k)+8kr+(r^2-r) \leq 15k+\tfrac{35}{8}r\\
\Rightarrow&&16k^2-31k + 8kr + (r^2-r) \leq \tfrac{35}{8}r
\end{eqnarray*}
It follows easily that $k \leq 1$, and if $k=1$ then $r^2-15 \leq -\tfrac{29}{8}r \Rightarrow r \leq 2$. Again with $k=1$ we can argue more precisely that if
the exceptional components are $\mathfrak{g}_1=E_8$, $\mathfrak{g}_2$, $\mathfrak{g}_3$ then
\[
4(8\ell_2+8\ell_3+\ell_2\ell_3) \leq 120 + (\abs{\Phi_2}-2\ell_2^2) +(\abs{\Phi_3}-2\ell_3^2)
\]
and the two terms on the right hand side are among $\{6, 20, 6, 35 \}$, and $\ell_2$, $\ell_3$ are each one of $\{2, 4, 6, 7\}$. We see that this can never hold.

This shows that $k = 0$, i.e., there are \emph{no} components of type $E_8$. Repeating the argument if there are $k'$ components of type $E_7$ and $r'$ other exceptional components, then 
\begin{eqnarray*}
&&4\left(49{k'\choose 2}+4{r'\choose 2}+14k'r'\right)\leq 35k'+20r'\\
\Rightarrow&&98k'(k'-1)+8r'(r'-1)+56k'r' \leq 35k'+20r'\\
\Rightarrow&&\tfrac{49}{2}k'^2+4r'^2+14k'r' \leq \tfrac{133}{4}k'+7r'\\
\Rightarrow&&\tfrac{49}{2}(k'-\tfrac{19}{4\cdot 7})^2 +4(r'-\tfrac{7}{8})^2+14k'r' \leq \tfrac{1}{2}\left(\tfrac{19}{4}\right)^2+\tfrac{49}{16}<\tfrac{361}{32}+3\tfrac{1}{16}<15.
\end{eqnarray*}
We readily deduce that at least one of $k'$ or $r'$ is $0$. Thus if there are any exceptional components then either there is an $E_7$ and no other exceptional component, or else there are no exceptional components of type $E_8$ or $E_7$. In the former case, if $V_1=E_7$ then the right side of \eqref{ineqx} is odd, while the left side is even, a contradiction. If there are no $E_8$, $E_7$ components, then as before we have in case there are $t$ exceptional components that $8t(t-1)\leq20t$, which implies $(t^2-4t)\leq 0$, so $t \leq 2$. But if $t=2$ we get equality, meaning two $F_4$ components and $l\ell_1 =\ell_2=4$, impossible. 

Thus $t = 1$, i.e., there is a unique exceptional component, and $2\ell_0^2+8\ell_0\leq 20$. Then $\ell_0 \leq 1$ and $\dim V_1=2\ell^2+\ell=14(15), 52(53), 78 (79)$ (parentheses denotes the case $\ell_0=1$), which can only occur when $\ell=6$, $\ell_0=0$ and $V_1=E_6$. Furthermore $c=\tilde{c}=\tfrac{13}{2}$ and
the commutant of $U\df \langle V_1\rangle$ is isomorphic to $\Vir_{c_{3, 4}}$. Note that $U \cong E_{6, k}$ for some positive integral $k$ by \cite{DMIntegrability}. But it now follows that $U$ has central charge $6$. Since $E_{6, k}$ has $c=\tfrac{78k}{k+12}$ we must have $k=1$.

Now $m=78=s(s-1)/2$, so $s=13$. Therefore $h_1= x+1=(s/16 - 1)+1 =13/16$ and $h_2= y+1 = 1/2$ and the conformal weights of $V$ are $\{0, 1/2, 13/16\}$. Now the conformal weights for the simple $E_{6, 1}$-modules are $\{0, 2/3\}$ while those for $\Vir(c_{3, 4})$ are $\{0, 1/2, 1/16\}$. Since $E_{6,1}\otimes \Vir(c_{3, 4})$ is a conformal subVOA of $V$, it is impossible to reconcile the conformal weights of the tensor product with those for $V$. Thus this case cannot occur. This finally completes the proof of Proposition \ref{propb}.
\end{proof}

With these Propositions in hand we have completed the proof of Theorem \ref{thmquad1}. There remain two outstanding cases, enumerated as (2) and (3) on page \pageref{examples1}. As noted, there are examples of VOAs with the relevant numerical data in both cases, namely $\Vir(c_{2, 7})$ and $A_{4, 1}$. In the first case we have $m=0$ and here we may appeal to the main result of \cite{ANS1} to immediately conclude that indeed $V\cong \Vir(c_{2, 7})$. 

For the sake of brevity we sketch how to prove nonexistence in example (3). First note that inasmuch as the data determines the character vector of $V$ it follows in particular that the character of $V$ coincides with that of $A_{4, 1}$. Now we may proceed much as in the proofs of the three Propositions, although here it is much easier because we already know that $m=24$. We have $c=\tilde{c}=4$ and $\ell\leq c$. If $\ell=c$ then $V$ is a lattice theory and as before we find that $V_1=A_4$ and then that $V\cong A_{4, 1}$. However this VOA has more than three simple modules so it cannot occur. If $\ell<c$ we obtain a contradiction as in the Propositions. Alternatively, we first identify the Lie algebra $V_1=A_4$ then conclude that  $\langle V_1\rangle \cong A_{4, k}$ for some integral level $k$. Now apply the majorization argument and knowledge of the character to get $V\cong A_{4, 1}$, and hence a contradiction as before.

This finally completes our proof of Theorem 1.

\section{The $U$-series}
\label{SU}

By their very definition, potential VOAs that belong to the $U$-series have three simple modules and survive all of the numerical tests that we have so far applied. From an arithmetic perspective they are exquisitely balanced.

In this Section we discuss further properties of these VOAs, especially the question of whether they actually \emph{exist}. We shall present some results that render it very likely that there are $15$ VOAs in the $U$-series. See Remark \ref{r:useries}. Two of these examples are well-known in the literature, namely $E_{8,2}$ and the Gerald H\"{o}hn's Baby Monster VOA $\VB^{\natural}_{(0)}$ \cite{Hoehn1}. The remaining examples come about by an application of the results of Gaberdiel, Hampapura and Mukhi \cite{GHM} and Lin \cite{Lin}. These works are applicable on the basis of an apparent and surprising connection between VOAs in the $U$-series and VOAs $X$ on the Schellekens list \cite{Schellekens} of holomorphic VOAs of central charge $c=24$. Indeed, we propose Hypothesis $S$ below, which is a natural assumption about glueing VOAs and which leads to the identification of the $U$-series VOAs with certain commutants of subalgebras for various choices of $X$.

\subsection{Connections with the Schellekens list}
Let us record some of the properties of a VOA $V$ that lies in the $U$-series:
\begin{enumerate}
\item[(i)] $V$ is strongly regular and has just $3$ simple modules $M_0=V$, $M_1$, $M_2$.
\item[(ii)] The $q$-characters $f_j(\tau)$ of the $M_j$ are each congruence modular functions of weight $0$ with nonnegative integral Fourier coefficients described explicitly in Figure \ref{f:fouriercoeffs}.
\item[(iii)] The character vector $F=(f_0, f_1, f_2)^T$  is a vector-valued modular form whose associated MLDE is monic with irreducible monodromy $\rho$.
\item[(iv)] There is an integer $p$ in the range $5 \leq p \leq 15$ such that the central charge $c$, the dimension $m$ of the Lie algebra on $V_1$, and the \emph{conformal weights} $h_j$ of the $M_j$ are as follows (cf.\ Figure \ref{f:full3}):
\begin{align*}
c&=p+\tfrac{17}{2},& m=&(15-p)(2p+17),& h_0=&0,& h_1=&\tfrac{3}{2},& h_2=&\tfrac{2p+1}{16}.
\end{align*}
The formula for $m$ derives from that for the elliptic surface (\ref{eq1}).
\item[(v)]  The $S$-matrix is
\begin{equation}
    \label{STmatrix}
\rho(S)=  \frac{1}{2}\left(\begin{matrix} 1 & 1 & \sqrt{2} \\1 & 1 & -\sqrt{2} \\ \sqrt{2} & -\sqrt{2} & 0 \end{matrix}\right) 
\end{equation}
 with lexicographic ordering. In particular the fusion rules for $V$ are the same as the Ising model $\Vir(c_{3, 4})$. Especially, it follows that $M_1$ has quantum dimension $1$ and is a \emph{simple current}.
\end{enumerate}

Now let $k$ be a nonnegative integer. We define a family of VOAs $V^{(k)}$ as follows:
\[
V^{(k)}\df \begin{cases}
\Vir(c_{3, 4}) & k=0,\\
A_{1,2} & k=1,\\
B_{k,1}&k \geq 2.
\end{cases}
\]
 As a reminder, from Table \ref{Table1} we see that, like VOAs in the $U$-series,  $V^{(k)}$ is a simple VOA with just three simple modules. Denote these
 by $V^{(k)}$, $M_1'$, $M_2'$, say with conformal weights $0$, $h_1'= \tfrac{1}{2}$ and $h_2'=\tfrac{2k{+}1}{16}$ respectively. The central charge of $V^{(k)}$ is equal to $c_k \df \tfrac{2k+1}{2}$.

Now choose any VOA in the $U$-series with parameter $p$ as before, and denote this VOA by $W^{(p)}$ and choose $k\df 15-p$, so that $0 \leq k \leq 10$. For this choice of $k$ the tensor product VOA
\begin{eqnarray*}
T^{k}\df W^{(p)} \otimes V^{(k)}
\end{eqnarray*}
is a simple VOA with central charge equal to $p + \tfrac{17}{2} + \tfrac{2k+1}{2} = 24$. Let us consider the $T^k$-module
\begin{equation}
\label{tridecomp}
  X\df (W^{(p)}\otimes V^{(k)}) \oplus (M_1\otimes M_1')\oplus (M_2\otimes M_2').
\end{equation}
Gerald H\"{o}hn calls this procedure \emph{glueing} $W^{(p)}$ and $V^{(k)}$. Each
$M_j \otimes M_j'$ is a simple module for $T^{k}$, $j=1, 2$. The next result is very useful.  
\begin{lem}
  \label{lemconfwts}
  The conformal weights of $M_j \otimes M_j'$ for $j=1$, $2$ are both equal to $2$. In particular the conformal grading on $X$ is \emph{integral}.
\end{lem}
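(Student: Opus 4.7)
The plan is to prove this by direct computation using the explicit conformal weight data tabulated in property (iv) above and in Table \ref{Table1}, combined with the relation $k = 15-p$ linking the two factors of $T^k$.

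First, I would simply compute $h_1 + h_1'$. From (iv), $h_1 = \tfrac{3}{2}$ is the conformal weight of $M_1$, and from Table \ref{Table1} together with the definition of $V^{(k)}$, the conformal weight $h_1'$ of $M_1'$ equals $\tfrac{1}{2}$ in all three cases ($k=0$, $k=1$, or $k \geq 2$). Hence $h_1 + h_1' = 2$.

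Next, for the second simple module, I would compute $h_2 + h_2' = \tfrac{2p+1}{16} + \tfrac{2k+1}{16} = \tfrac{2(p+k)+2}{16}$. Substituting $k = 15-p$ collapses this to $\tfrac{32}{16} = 2$, as required. The main (and really only) point worth noting is that the particular choice $k = 15 - p$ is precisely what makes the fractional parts of $h_2$ and $h_2'$ sum to an integer; this is the structural reason one pairs $W^{(p)}$ with $V^{(15-p)}$ in the glueing.

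Finally, for integrality of the conformal grading on $X$, I would invoke the standard fact that for a strongly regular VOA, every simple module $N$ has $L(0)$-spectrum contained in $h_N + \mathbb{Z}_{\geq 0}$, so that $M_j \otimes M_j'$ has $L(0)$-eigenvalues of the form $(h_j + h_j') + \mathbb{Z}_{\geq 0}$. Since the three summands in the decomposition \eqref{tridecomp} have minimal $L(0)$-eigenvalues $0$, $2$, and $2$ respectively, all three contribute only integer conformal weights, and hence $X$ is integrally graded. There is no real obstacle here; the content of the lemma is the arithmetic identity $h_2 + h_2' \in \ZZ$, which is a consequence of the coupling $k + p = 15$ that defines the pairing.
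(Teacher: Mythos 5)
Your proof is correct and matches the paper's own argument: both compute $h_1 + h_1' = \tfrac{3}{2} + \tfrac{1}{2} = 2$ and $h_2 + h_2' = \tfrac{2p+1}{16} + \tfrac{2k+1}{16} = 2$ via the coupling $k = 15-p$, and conclude integrality from the decomposition \eqref{tridecomp}. Your added remarks on the structural role of the pairing $k+p=15$ and the $L(0)$-spectrum are a useful elaboration but not a different approach.
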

\begin{proof}
  We have $h_1+h_1'=\tfrac{3}{2}+\tfrac{1}{2}=2$ and $h_2+h_2'=\tfrac{2p+1}{16}{+}\tfrac{2k+1}{16}{=}2$. The Lemma follows.
\end{proof}

\begin{cor}
\label{corX1}
The conformal weight $1$ piece $X_1$ of $X$ satisfies
\[
X_1=T^k_1= W^{(p)}_1\oplus V^{(k)}_1.
\]
$\hfill\Box$
\end{cor}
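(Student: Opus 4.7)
The plan is to read off the weight-one subspace of $X$ directly from its decomposition \eqref{tridecomp} and the information already assembled. By definition $X$ is a direct sum of three $T^k$-modules, and taking $L(0)$-eigenspaces commutes with direct sums, so
\[
X_1 = (T^k)_1 \oplus (M_1 \otimes M_1')_1 \oplus (M_2 \otimes M_2')_1.
\]

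First I would handle the two module summands. For a simple module $M$ with conformal weight $h$, the graded decomposition has the form $M = \bigoplus_{n \geq 0} M_{n+h}$, so $M_1 = 0$ whenever $h > 1$. Lemma \ref{lemconfwts} gives precisely that $M_j \otimes M_j'$ has conformal weight $2$ for $j = 1, 2$, hence $(M_j \otimes M_j')_1 = 0$. Thus the two module summands contribute nothing to $X_1$, and $X_1 = (T^k)_1$.

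Next I would identify $(T^k)_1$ with $W^{(p)}_1 \oplus V^{(k)}_1$. Since $T^k = W^{(p)} \otimes V^{(k)}$, the standard grading on a tensor product of VOAs gives
\[
(T^k)_1 = \bigoplus_{i + j = 1} W^{(p)}_i \otimes V^{(k)}_j = \bigl(W^{(p)}_1 \otimes V^{(k)}_0\bigr) \oplus \bigl(W^{(p)}_0 \otimes V^{(k)}_1\bigr).
\]
Since both $W^{(p)}$ and $V^{(k)}$ are CFT-type strongly regular VOAs, their weight-zero pieces are one-dimensional (spanned by the respective vacua), so the two summands are canonically identified with $W^{(p)}_1$ and $V^{(k)}_1$ respectively. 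Assembling these identifications gives the claimed equality $X_1 = T^k_1 = W^{(p)}_1 \oplus V^{(k)}_1$.

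There is essentially no obstacle here: Lemma \ref{lemconfwts} does the real arithmetic, and the rest is the universal fact that in a tensor product of two CFT-type VOAs the weight-one subspace of the product is the direct sum of the weight-one subspaces of the factors. The only thing to double-check is that both factors really are of CFT-type, which is built into the definitions of $W^{(p)}$ (it lies in the $U$-series of strongly regular VOAs) and of $V^{(k)}$ (it is either a unitary discrete series Virasoro VOA, the affine VOA $A_{1,2}$, or an affine VOA $B_{k,1}$, all of which are strongly regular).
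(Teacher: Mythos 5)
Your argument is correct and is exactly the (unstated) proof the paper has in mind: the corollary follows immediately from Lemma \ref{lemconfwts}, since the two nonadjoint summands in \eqref{tridecomp} have lowest $L(0)$-eigenvalue $2$ and hence contribute nothing in weight $1$, while the tensor-product grading on $T^k = W^{(p)} \otimes V^{(k)}$ with both factors of CFT-type gives $(T^k)_1 = W^{(p)}_1 \oplus V^{(k)}_1$. The paper marks this corollary with $\Box$ precisely because it is this immediate, and your write-up fills in exactly the routine steps.
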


Let $\chi \df \chi_X=\Tr_Xq^{L(0)-1}$ be the $q$-character of $X$. It follows from Lemma \ref{lemconfwts} that
\begin{equation}
  \label{qchar}
\chi \in  q^{-1}\ZZ[[q]].
\end{equation}

\begin{lem}
  \label{lem48}
  $\chi$ is the modular function of level $1$ and weight $0$ given by
\[
\chi = J(q)+48k.
\]
where $J(q) \df q^{-1}+196884q+\cdots$ is the absolute modular invariant with constant term $0$.
\end{lem}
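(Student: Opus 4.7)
The plan is to show that $\chi$ is a $\Gamma$-invariant holomorphic function on $\uhp$ whose $q$-expansion begins $q^{-1} + 48k + O(q)$; then the standard uniqueness of weight-zero modular functions on $\Gamma$ with prescribed polar part at $\infty$ forces $\chi - J = 48k$, proving the claim. Holomorphy on $\uhp$ is immediate from the fact that each of the $q$-characters $f_j$ of $W^{(p)}$-modules and $g_j$ of $V^{(k)}$-modules is holomorphic there, so the finite sum $\chi = \sum_j f_j g_j$ is too.

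For modularity, write $F = (f_0, f_1, f_2)^T$ and $G = (g_0, g_1, g_2)^T$ for the character vectors of $W^{(p)}$ and $V^{(k)}$ respectively, so $\chi = F^T G$. Under $\gamma \in \Gamma$ this transforms to $F^T \rho_W(\gamma)^T \rho_V(\gamma) G$, so $\Gamma$-invariance reduces to the identity $\rho_W(\gamma)^T \rho_V(\gamma) = I$ for the generators $\gamma = S, T$. By property (v) and the well-known Ising-type modular data on each of $\Vir(c_{3,4})$, $A_{1,2}$ and $B_{k,1}$, both $\rho_W(S)$ and $\rho_V(S)$ equal the symmetric matrix displayed in \eqref{STmatrix}, which squares to the identity; this yields the $S$-identity at once. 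Since $\rho_W(T)$ and $\rho_V(T)$ are diagonal, the $T$-identity reduces to the integrality of $r_j + r_j'$ for each $j$, where $r_j \equiv h_j - c/24$ and $r_j' \equiv h_j' - c'/24 \pmod{\ZZ}$. Since $c + c' = 24$ and, by Lemma \ref{lemconfwts}, $h_j + h_j' \in \ZZ$ for every $j$, these sums are all integers.

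For the $q$-expansion, the leading power of $q$ in $f_j g_j$ is $q^{h_j + h_j' - (c+c')/24} = q^{h_j + h_j' - 1}$, which by Lemma \ref{lemconfwts} equals $q^{-1}$ for $j = 0$ and $q$ for $j = 1, 2$. Thus only $f_0 g_0$ contributes to the principal part and the constant term, and expanding
\[
f_0 g_0 = q^{-1}(1 + m_W q + O(q^2))(1 + m_V q + O(q^2)) = q^{-1} + (m_W + m_V) + O(q).
\]
Writing $p = 15 - k$, property (iv) gives $m_W = (15-p)(2p+17) = k(47 - 2k)$; the dimension of the weight-one subspace $m_V := \dim V^{(k)}_1$ is $k(2k+1)$ uniformly, since $\Vir(c_{3,4})_1 = 0$, $\dim A_1 = 3$, and $\dim B_k = k(2k+1)$ for $k \geq 2$. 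Adding, $m_W + m_V = k(47 - 2k) + k(2k+1) = 48k$, as required.

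The genuinely nontrivial input is the $S$-matrix identification: one must know that $\rho_W(S)$ and $\rho_V(S)$ coincide under compatible orderings of modules. For $W^{(p)}$ this is property (v), and for $V^{(k)}$ it is classical (Ising fusion), but without this matching there would be no reason to expect $F^T G$ to be $S$-invariant. Once that is granted, the argument is essentially bookkeeping with conformal weights and dimensions.
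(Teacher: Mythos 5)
Your proof is correct and follows essentially the same route as the paper's: establish $T$-invariance from integrality of the $\ZZ$-grading on $X$ (Lemma~\ref{lemconfwts}), establish $S$-invariance from the fact that both factors carry the symmetric involutive $S$-matrix \eqref{STmatrix}, and then read off the constant term from $\dim X_1$ via Corollary~\ref{corX1}. The only cosmetic difference is that you phrase the $S$-step as the identity $\rho_W(S)^T\rho_V(S)=I$ while the paper does the equivalent direct tensor-basis computation, and you spell out the $T$-step a bit more explicitly than the paper's appeal to \eqref{qchar}.
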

\begin{proof}
  After \eqref{qchar}, $\chi$ is invariant under the $T$-action $\tau\mapsto \tau{+}1$. So to prove that $\chi$ is modular of level $1$ it suffices to establish invariance under the action of $S$. This will follow directly by a formal calculation based on the nature of $\rho(S)$ \eqref{STmatrix}. For the VOAs $W^{(p)}$ and $V^{(k)}$ have identical $S$-matrices. Therefore if we formally let $\{e_j\}$, $\{f_j\}$  ($j=1, 2, 3$) index bases with respect to which the two $S$-matrices are written then
\begin{align*}
  S \otimes S: \sum_i e_i\otimes f_i \mapsto& \tfrac{1}{4}\left\{2(e_1\otimes f_1+e_2\otimes f_2+2e_3\otimes f_3)+2(e_1 \otimes f_1+e_2 \otimes f_2) \right\}\\
  =&\sum_i e_i \otimes f_i,
\end{align*}
which is the required $S$-invariance.

It is well-known \cite{Zhu} that the $q$-characters of simple modules for strongly regular VOAs are holomorphic in the complex upper-half plane. Therefore $\chi$ is modular of level $1$ with a simple pole at $\infty$ and no other poles, and leading coefficient $1$. It follows that $\chi=J(\tau){+}\kappa$ for a constant $\kappa$.

To compute the constant $\kappa$, which is equal to $\dim X_1$, use Corollary \ref{corX1} to see that
\begin{align*}
  \dim X_1=&\dim W^{(p)}_1{+}\dim V^{(k)}_1\\
          =&m+\dim  B_k\\
  =&(15-p)(2p+17)+(2k^2+k)\\
  =&48k.
\end{align*}
This completes the proof of the Lemma.
\end{proof}

Lemma \ref{lem48} naturally suggests

\medskip\noindent
\textbf{Hypothesis S}: $X$ carries the structure of a holomorphic VOA containing $T^k$ as a subVOA. $X$ is therefore a holomorphic VOA of central charge $24$, that is, it is on the Schellekens list.
\medskip

Hypothesis S is completely analogous to H\"{o}hn's Vermutung 3.2.1 in \cite{Hoehn1}. It suggests where we should look to find VOAs in the $U$-series. We consider  this option in the next Subsection.

\subsection{Existence of $U$-series VOAs}

Throughout this Subsection, and for the sake of comparison, we generally use notation similar to that of the previous Subsection. In particular, we now fix $X$ to be a VOA on the Schellekens list. For a recent survey on the status of the VOAs in the Schellekens list, we refer the reader to \cite{LS}. In particular, the Schellekens list VOAs intervening in Table 3 exist and they are unique. Let $V^{(k)}{\subseteq}X$ be a subVOA isomorphic to an affine algebra as in the previous Subsection such that the weight 1 piece $V^{(k)}_1$ is a simple Lie algebra component of $X_1$ isomorphic to either $A_{1, 2}$ or $B_{k, 1}$.

This assumption involves some \emph{exclusions}. First, the case $k = 0$ and $V^k=\Vir(c_{3, 4})$ does not occur. This case is somewhat exceptional and was, in any case, handled by H\"{o}hn. Secondly, the cases $k=7, 9, 10$ do not occur either, but for a different reason. Namely because there is no $X$ with such a subalgebra (cf. Table \ref{TableS}).

\begin{lem}
  We have $V^{(k)}=C(C(V^{(k)}))$, i.e., $V^{(k)}$ coincides with its double commutant in $X$.
\end{lem}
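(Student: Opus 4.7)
The plan is to identify $X$ as a module over the conformal pair $V^{(k)} \otimes C(V^{(k)})$, and then read off the double commutant summand by summand.

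First, I would establish that $V^{(k)} \otimes C(V^{(k)})$ embeds in $X$ as a \emph{conformal} subVOA. Since $V^{(k)}$ is strongly regular (being $A_{1,2}$ or $B_{k,1}$, an affine VOA at positive integral level), results of Krauel--Miyamoto and the theory of commutants in holomorphic VOAs developed by Lin \cite{Lin} imply that $C(V^{(k)})$ is likewise strongly regular and that the central charges saturate the natural bound:
\[
c_{V^{(k)}} + c_{C(V^{(k)})} \;=\; c_X \;=\; 24.
\]
The two Virasoro vectors therefore sum to the Virasoro vector of $X$, so $V^{(k)} \otimes C(V^{(k)})$ sits conformally in $X$.

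Next, I would invoke the dual-pair decomposition of $X$ as a $V^{(k)} \otimes C(V^{(k)})$-module, cf.\ \cite{Lin}:
\[
X \;=\; \bigoplus_{i \in I} M_i \otimes N_i,
\]
where the $M_i$ are pairwise non-isomorphic simple $V^{(k)}$-modules, the $N_i$ are pairwise non-isomorphic simple $C(V^{(k)})$-modules, and the correspondence $i \mapsto (M_i, N_i)$ is a bijection. In particular the adjoint pair $(M, N) = (V^{(k)}, C(V^{(k)}))$ occurs as exactly one summand.

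Finally, I would read off $C(C(V^{(k)}))$ from this decomposition. A vector $m \otimes n \in M_i \otimes N_i$ lies in $C(C(V^{(k)}))$ iff $a_j \cdot n = 0$ for all $a \in C(V^{(k)})$ and all $j \geq 0$, since $C(V^{(k)})$ acts trivially on the $M_i$ factor. Such a singular vector in a simple CFT-type $C(V^{(k)})$-module exists only when $N_i \cong C(V^{(k)})$, and is then a scalar multiple of the vacuum $\mathbf{1}$. By the uniqueness of the adjoint pair, this forces
\[
C(C(V^{(k)})) \;=\; V^{(k)} \otimes \CC\mathbf{1} \;\cong\; V^{(k)},
\]
as desired.

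The main obstacle is justifying the conformal embedding $c_{V^{(k)}} + c_{C(V^{(k)})} = 24$ and the existence of the dual-pair decomposition; both rely on the nontrivial machinery of Krauel--Miyamoto and Lin applied to holomorphic VOAs. Once these ingredients are in place, the module-theoretic computation of the double commutant is essentially formal.
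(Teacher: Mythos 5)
Your proposal takes a genuinely different route from the paper, and it contains a gap that the paper's approach is specifically designed to avoid. The paper proves this lemma by a short elementary argument: setting $D = C(C(V^{(k)}))$, one has $V^{(k)} \subseteq D$ automatically; the hypothesis that $V^{(k)}_1$ is a full simple Lie algebra component of $X_1$ forces $D_1 = V^{(k)}_1$; the standard commutant identity $\omega_{C(C(V^{(k)}))} = \omega_X - \omega_{C(V^{(k)})} = \omega_{V^{(k)}}$ shows $D$ and $V^{(k)}$ share a Virasoro element; and then $D$, viewed as a $V^{(k)}$-module with integral conformal grading, must decompose using only the adjoint module (since $A_{1,2}$ and $B_{k,1}$ have simple modules of conformal weights $0$, $\tfrac12$, $\tfrac{2k+1}{16}$, and only the first is integral), whence $D = V^{(k)}$ by CFT-type. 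Crucially, this uses only the rationality of $V^{(k)}$ itself, which is known.

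The gap in your proposal is that you treat the strong regularity of $C(V^{(k)})$ as an established fact ("results of Krauel--Miyamoto and the theory of commutants in holomorphic VOAs developed by Lin imply that $C(V^{(k)})$ is likewise strongly regular"). The paper explicitly does \emph{not} take this as known: it introduces it immediately \emph{after} this lemma as \textbf{Hypothesis C}, labelling it a "standard conjecture concerning commutants." Your appeal to Lin's dual-pair decomposition $X = \bigoplus_i M_i \otimes N_i$ with pairwise non-isomorphic factors is exactly the content of the paper's subsequent lemma establishing \eqref{tridecomp}, which in the paper is proved \emph{conditionally on Hypothesis C and after} the double-commutant lemma. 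So your argument both assumes an unproved conjecture and inverts the paper's logical order; it would also need checking that Lin's decomposition theorem does not itself presuppose the double-commutant property for the subalgebra, which would make the argument circular. The paper's ordering is precisely arranged so that the double-commutant fact is obtained unconditionally and cheaply, \emph{before} any appeal to Hypothesis C.

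Your final "read off the summand" step (that a simple CFT-type module over $C(V^{(k)})$ admits a vector killed by all $a_j$, $j \geq 0$, only if it is the adjoint) is fine once the decomposition is granted, and is in the same spirit as the paper's observation that only the adjoint $V^{(k)}$-module has integral grading. So the module-theoretic kernel of your idea is sound; the problem is the heavy and conditional machinery you invoke to reach it, where the paper gets by with much less.
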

\begin{proof}
  Let $D\df C(C(V^{(k)}))$ be the double commutant in question. It is a subVOA of $X$ that contains $V^{(k)}$, and we have $D_1 = V^{(k)}_1$. Furthermore $D$ and $V^{(k)}$ share the \emph{same Virasoro element}. On the other hand, of the three simple modules for $V^{(k)}$, the adjoint module is the only one that has integral conformal grading. Now the equality $D = V^{(k)}$ follows.
\end{proof}

Continuing earlier notation, we set
\begin{eqnarray}
  \label{Udef}
W^{(p)}\df C(V^{(k)}).
\end{eqnarray}
Note the difference, however. Our earlier $W^{(p)}$ was the hypothesized $U$-series VOA, whereas now there is no question about its existence. What \emph{is} in doubt is whether $W^{(p)}$ as defined in \eqref{Udef} is in the $U$-series. Basically, this comes down to the question, does $W^{(p)}$ have exactly three simple modules? In the next few paragraphs we will state and prove what we know about this. We will show on the basis of the results of Gaberdiel, Hampapura and Mukhi \cite{GHM} that $W^{(p)}$ is indeed in the $U$-series. This work, while undoubtedly correct, was not developed on an axiomatic basis.

We will need to use the following standard conjecture concerning commutants in a strongly regular VOA. In the present context it says

\medskip\noindent
\textbf{Hypothesis C}: $W^{(p)}$ is a strongly regular VOA.
\medskip

Proceeding on the basis of this Hypothesis C (actually, we only need $W^{(p)}$ to be rational and $C_2$-cofinite) we first prove 

\begin{lem}
  The decomposition \eqref{tridecomp} holds, where $M_1'$, $M_2'$ are the nonadjoint simple modules for $V^{(k)}$, labelled according to the $S$-matrix \eqref{STmatrix} and $M_1$, $M_2$ are simple modules for $W^{(p)}$.
\end{lem}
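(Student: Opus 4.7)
The plan is to decompose $X$ as a module over the commuting pair $V^{(k)} \otimes W^{(p)}$ and then pin down the summands using modular invariance of $\chi_X$. Since $c(V^{(k)}) + c(W^{(p)}) = \tfrac{2k+1}{2} + p + \tfrac{17}{2} = p+k+9 = 24 = c(X)$, the tensor product $V^{(k)} \otimes W^{(p)}$ shares its Virasoro vector with $X$ and is thus a conformal subVOA. Under Hypothesis C, $W^{(p)}$ is strongly regular, and the previous Lemma shows that $V^{(k)}$ and $W^{(p)}$ are mutual commutants. The standard commutant duality theorem for rational, $C_2$-cofinite VOAs (as developed by Lin, Krauel--Miyamoto, Dong--Jiang--Li, among others) then produces a decomposition
\[
X = \bigoplus_{i \in I} V^{(k)}_i \otimes N_i,
\]
where the pairs $(V^{(k)}_i, N_i)$ are distinct, each $V^{(k)}_i$ is a simple $V^{(k)}$-module, and each $N_i$ is a simple $W^{(p)}$-module.

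The adjoint piece is pinned down at once: since $V^{(k)} \otimes W^{(p)} \subseteq X$ is itself a $V^{(k)} \otimes W^{(p)}$-submodule and $W^{(p)} = C(V^{(k)})$ in $X$, we have $0 \in I$, $V^{(k)}_0 = V^{(k)}$, and $N_0 = W^{(p)}$. To show that both nontrivial simple $V^{(k)}$-modules $M_1'$ and $M_2'$ also occur, I argue by contradiction: suppose $M_2'$ did not appear, so that $\chi_X = \chi_{V^{(k)}} \chi_{W^{(p)}} + \chi_{M_1'} \chi_{N_1}$. Applying $\tau \mapsto -1/\tau$ and using the explicit $S$-matrix \eqref{STmatrix} on the $V^{(k)}$-factors, the coefficient of $\chi_{M_2'}(\tau)$ on the right-hand side equals $\tfrac{\sqrt{2}}{2}\bigl(\chi_{W^{(p)}}(-1/\tau) - \chi_{N_1}(-1/\tau)\bigr)$. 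Modular invariance $\chi_X(-1/\tau) = \chi_X(\tau)$ (Lemma \ref{lem48}) forces this coefficient to vanish, hence $\chi_{W^{(p)}} = \chi_{N_1}$. But $W^{(p)}$ has conformal weight $0$ while $N_1$ has \emph{positive} conformal weight (the integrality of the $X$-grading combined with $h_1' = \tfrac{1}{2}$ forces $h(N_1) \geq \tfrac{1}{2}$), a contradiction. The symmetric argument excludes the absence of $M_1'$, and so $I = \{0,1,2\}$.

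Finally, the labelling is determined by conformal weights: since $X$ has integer conformal grading, $h(N_j) \equiv -h_j' \pmod{\ZZ}$ for $j = 1, 2$. Using $p+k = 15$, this pins down $h(N_1) = \tfrac{3}{2}$ (pairing with $h_1' = \tfrac{1}{2}$) and $h(N_2) = \tfrac{2p+1}{16}$ (pairing with $h_2' = \tfrac{2k+1}{16}$). Writing $M_1 \df N_1$ and $M_2 \df N_2$ then yields the decomposition \eqref{tridecomp} with the $S$-matrix labelling \eqref{STmatrix}.

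The main obstacle is the initial invocation of the commutant duality theorem in a form strong enough to guarantee a multiplicity-free decomposition in which each multiplicity space is \emph{simple}: this is folklore in the rational VOA literature, but rests both on Hypothesis C and on the modular tensor category machinery for dual pairs that has been assembled piecemeal in the references cited above.
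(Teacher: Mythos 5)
Your core move — invoking the commutant-duality theorem for the dual pair $V^{(k)}$, $W^{(p)} = C(V^{(k)})$ inside the holomorphic $X$, under Hypothesis C — is exactly what the paper does: its entire proof is a one-line citation to equation (1.1) of Lin's paper. So the approaches coincide at the root.

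Where you diverge is in adding a modular-invariance argument to show that both nontrivial $V^{(k)}$-modules actually occur. This is worth flagging for two reasons. First, it is redundant if one uses the full strength of Lin's result: (1.1) of \cite{Lin} already gives a \emph{bijection} between the simple $V^{(k)}$-modules and the simple $W^{(p)}$-modules appearing in the decomposition, which forces all three $V^{(k)}$-modules to occur. Second, as written it is not airtight: ``extracting the coefficient of $\chi_{M_2'}(\tau)$'' treats $\chi_{W^{(p)}}(-1/\tau)$ and $\chi_{N_1}(-1/\tau)$ as scalar coefficients, but this requires the products $\chi_{V_j}(\tau)\chi_{N_k}(\tau)$ over \emph{all} pairs to be linearly independent, which in turn requires that the residue classes $h(V_j) - h(V_{j'}) \pmod{\ZZ}$ never coincide with any $h(N_{k'}) - h(N_k) \pmod{\ZZ}$; this is not a priori known about the $W^{(p)}$-module spectrum. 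A related small point: you justify $\chi_X(-1/\tau) = \chi_X(\tau)$ by citing Lemma \ref{lem48}, but that Lemma presupposes the decomposition \eqref{tridecomp} and so would be circular here; the $S$-invariance you actually need is just the standard fact that the $q$-character of a holomorphic, strongly regular VOA is a weight-$0$ level-$1$ modular function, which is available without the Lemma. If you intend to keep a self-contained modular argument rather than quoting Lin's bijection, the cleaner route is the one you gesture at implicitly: apply the full $S$-matrix of $W^{(p)}$ as well and compare coefficients of $\chi_{V_j}(\tau)\chi_{N_k'}(\tau)$ for a fixed $N_{k'}$; the vanishing of all $\chi_{M_2'}$-terms then forces two \emph{rows} of the $W^{(p)}$ $S$-matrix to agree, contradicting its unitarity. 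Finally, the concluding paragraph about the exact conformal weights $3/2$ and $(2p+1)/16$ overshoots what the Lemma asserts; integrality of the $X$-grading only fixes them mod $\ZZ$, and the Lemma itself requires no more than the decomposition.
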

\begin{proof} Because we are assuming Hypothesis C, this follows from results of Xingjun Lin \cite{Lin}, cf. (1.1) of that paper.
\end{proof}

Furthermore we have

\begin{prop}
  The following hold:
\begin{enumerate}
\item[(a)] $M_1$ and $M_1'$ are simple currents for $W^{(p)}$ and $V^{(k)}$ respectively.
\item[(b)] $V^{(k)}\oplus M_1'$ and $W^{(p)} \oplus M_1$ are both rational super VOAs.
\item[(c)] $(W^{(p)} \otimes V^{(k)})\oplus (M_1 \otimes M_1')$ is a conformal subVOA of $X$.
\end{enumerate}
\end{prop}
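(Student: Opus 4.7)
The plan is to treat the three parts in sequence, each leveraging a different standard structural result about modules over strongly regular VOAs together with the explicit $S$-matrix \eqref{STmatrix}.

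For part (a), I would read the quantum dimensions off the first column of $\rho(S)$: they are $S_{0j}/S_{00} = 1, 1, \sqrt{2}$, so both $M_1$ and (by the same computation applied to $V^{(k)}$, whose $S$-matrix is identical) $M_1'$ have quantum dimension $1$, which forces them to be simple currents. For concreteness one can verify this via the Verlinde formula: using that $\rho(S)$ is real, symmetric and unitary, a direct calculation gives $N_{11}^0 = 1$ and $N_{11}^1 = N_{11}^2 = 0$, so $M_1\boxtimes M_1 \cong W^{(p)}$, and similarly $M_1'\boxtimes M_1'\cong V^{(k)}$.

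For part (b), I would invoke the standard theory of simple current extensions: if $V$ is strongly regular and $M$ is a self-dual simple current with conformal weight in $\tfrac12\ZZ\setminus\ZZ$, then $V\oplus M$ carries the structure of a rational vertex operator superalgebra with $M$ as the odd part. The hypotheses are satisfied in both cases under consideration, since $h_1 = \tfrac{3}{2}$ and $h_1' = \tfrac{1}{2}$ are both half-integers and (a) supplies the simple current condition. Rationality of the extension is a consequence of rationality of $W^{(p)}$ (Hypothesis C) and of $V^{(k)}$.

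For part (c), set $Y \df (W^{(p)}\otimes V^{(k)})\oplus (M_1\otimes M_1')$, viewed as a subspace of $X$ via the tri-decomposition \eqref{tridecomp} established in the preceding lemma. By Lemma \ref{lemconfwts} the conformal weight of $M_1\otimes M_1'$ is $h_1+h_1' = 2\in\ZZ$, so $Y$ is integrally graded. By part (a), the fusion $(M_1\otimes M_1')\boxtimes(M_1\otimes M_1')\cong (M_1\boxtimes M_1)\otimes (M_1'\boxtimes M_1')\cong W^{(p)}\otimes V^{(k)}$, so the components of the vertex operators of $X$ send $Y\otimes Y$ into $Y$. Since the Virasoro element of $X$ lies in $W^{(p)}\otimes V^{(k)}\subseteq Y$, the subspace $Y$ is a conformal subVOA of $X$, namely the simple-current $\ZZ/2$-extension of $T^k = W^{(p)}\otimes V^{(k)}$ by the integrally graded simple current $M_1\otimes M_1'$. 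The main technical hurdle is the rigorous identification of this closed subspace with an actual VOA (rather than merely a subspace closed under the components of $Y(-,z)$); this is exactly the content of the general simple current extension theorem in the framework of strongly regular VOAs, and I would quote it from \cite{Lin} to avoid repeating its derivation.
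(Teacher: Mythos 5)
Your treatment of $M_1'$ and of part (c) is sound and essentially matches the paper, but the argument for part (a) as applied to $M_1$ has a genuine circularity. You read the quantum dimension of $M_1$ off the matrix $\rho(S)$ in \eqref{STmatrix}, but that matrix is only known to be the $S$-matrix of a VOA that is \emph{already} in the $U$-series. At the point where this Proposition sits, $W^{(p)}$ is merely the commutant \eqref{Udef} inside a Schellekens-list VOA $X$; Hypothesis C gives strong regularity, but we do \emph{not} yet know that $W^{(p)}$ has exactly three simple modules, let alone that its $S$-matrix equals \eqref{STmatrix}. That identification is precisely the content of the Theorem proved directly after this Proposition (and the Theorem's proof uses the Proposition). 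So Verlinde/quantum-dimension arguments based on \eqref{STmatrix} are not available for $W^{(p)}$ here. The paper sidesteps this by arguing only on the $V^{(k)}$ side -- where the $S$-matrix and the simple-current property of $M_1'$ are classical -- and then transporting the simple-current property to $M_1$ via Lin's duality between the module subcategories of $V^{(k)}$ and its commutant inside the holomorphic $X$ (cf.\ \cite{Lin}). This is the step your proposal is missing, and it is the crux of part (a).

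Once (a) is repaired, your part (b) is reasonable, though the paper's route is slightly different: it cites \cite{DLMCurrents}, Examples 5.11--5.12, for the concrete statement that $V^{(k)}\oplus M_1'$ is a rational super VOA (rather than a general simple-current-extension theorem), and it obtains the super VOA structure on $W^{(p)}\oplus M_1$ as a \emph{consequence} of the closure of the tri-sum (part (c)), rather than by invoking an abstract extension theorem for $W^{(p)}$ directly. Your invocation of a general extension theorem would also need care about whether such a theorem is available at the stated generality for an arbitrary strongly regular $V$ (the paper prefers to lean on the concrete affine-algebra case plus Lin's duality). Part (c) matches the paper in spirit: integral grading plus closure under products; the paper phrases the closure as following from the simple-current property established in (a).
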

\begin{proof}
  That $M_1'$ is a simple current for $V^{(k)}$ was already pointed out following \eqref{STmatrix}. Indeed, these simple currents for affine algebras are well-known. That $V^{(k)} \oplus M_1'$ is a rational super VOA is proved in \cite{DLMCurrents}, Examples 5.11 and 5.12. As for $M_1$, that it is a simple current for $W^{(p)}$ follows from the existence of $M_1'$ and the duality between module subcategories proved by Lin \cite{Lin}. Now it follows that the subspace $(W^{(p)}\otimes V^{(k)})\oplus (M_1\otimes M_1')\subseteq X$ is closed with respect to products, hence it is a subVOA. Therefore $V^{(k)}\oplus M_1'$ is itself a super VOA because we have already seen that $W^{(p)}\oplus M_1$ is. This completes the proof of the Lemma.
\end{proof}

Our main result is the following
\begin{thm}
  $W^{(p)}$ is a VOA in the $U$-series.
\end{thm}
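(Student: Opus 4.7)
The plan is to verify that $W^{(p)} = C(V^{(k)})$ satisfies every defining feature of a $U$-series VOA collected at the start of Section \ref{SU}: strong regularity, exactly three simple modules, a monic MLDE of order $3$ with irreducible monodromy, and numerical invariants matching one of the eleven entries of Figure \ref{f:full3}.

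First I would invoke Hypothesis C to conclude that $W^{(p)}$ is strongly regular. Combining the decomposition \eqref{tridecomp} with Lin's duality for commutant pairs in holomorphic VOAs \cite{Lin}, the three summands exhaust a full decomposition of $X$ into simple $W^{(p)}\otimes V^{(k)}$-modules, and the modules $M_1, M_2$ are simple, pairwise non-isomorphic $W^{(p)}$-modules. Applying the same duality to the opposite pair $(W^{(p)}, C(W^{(p)}))$ together with the double-commutant identity $V^{(k)} = C(C(V^{(k)})) = C(W^{(p)})$ yields the reverse bijection and forces $W^{(p)}$ to have exactly three simple modules, namely $W^{(p)}, M_1, M_2$.

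Next I would compute the numerical invariants. The central charge satisfies $c_{W^{(p)}} = c_X - c_{V^{(k)}} = 24 - (2k+1)/2 = p + 17/2$, using $k = 15-p$. Since $X$ is holomorphic of central charge $24$, its conformal grading is integral, and together with Lemma \ref{lemconfwts} this forces $h_i + h_i' = 2$ for $i=1,2$. Substituting $h_1' = 1/2$ and $h_2' = (2k+1)/16 = (31-2p)/16$ gives $h_1 = 3/2$ and $h_2 = (2p+1)/16$, matching the formulas of property (iv) at the start of Section \ref{SU}. A direct check confirms $c = 8(h_1+h_2-\tfrac{1}{2})$, so by Lemma \ref{lemc} the MLDE annihilating $\ch_{W^{(p)}}$ is monic of weight zero, and its order equals $\dim \ch_{W^{(p)}} = 3$.

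The main obstacle will be establishing irreducibility of the monodromy. For this I would exploit modularity. Since $\chi_X = J + 48k$ is a level $1$ modular function (Lemma \ref{lem48}) and the characters of $V^{(k)}$ transform under $\Gamma$ via the Ising-type $S$-matrix \eqref{STmatrix}, the identity $\chi_X = \chi_{W^{(p)}}\chi_{V^{(k)}} + \chi_{M_1}\chi_{M_1'} + \chi_{M_2}\chi_{M_2'}$ determines the $\Gamma$-action on the character vector $F = (\chi_{W^{(p)}},\chi_{M_1},\chi_{M_2})^T$; one finds that $\rho_{W^{(p)}}(S)$ is again \eqref{STmatrix}. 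Irreducibility then follows by a direct argument: $\rho(T)$ is diagonal with three distinct eigenvalues (the residues $-c/24, h_1-c/24, h_2-c/24$ are pairwise incongruent modulo $\ZZ$), so any $\Gamma$-invariant subspace is a union of $T$-eigenlines, and the nonvanishing off-diagonal entries of $\rho(S)$ in \eqref{STmatrix} preclude any such proper subspace from being $S$-stable. Finally, computing $m = \dim W^{(p)}_1 = \dim X_1 - \dim V^{(k)}_1 = (15-p)(2p+17)$ and matching $(c, h_1, h_2, m)$ against Figure \ref{f:full3} identifies $W^{(p)}$ with the $U$-series entry corresponding to the parameter $k$, completing the proof.
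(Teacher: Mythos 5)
Your proposal sets up the right checklist, and the numerical computations (central charge $c = p + \tfrac{17}{2}$, conformal weights $h_1 = \tfrac{3}{2}$, $h_2 = \tfrac{2p+1}{16}$, dimension $m = (15-p)(2p+17)$, and monicity via the converse of Lemma~\ref{lemc}(a)) are all correct. The irreducibility argument from distinct $T$-eigenvalues and nonzero off-diagonal entries of $\rho(S)$ is also fine once the $S$-matrix is known.

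The gap is in the crucial step where you claim that Lin's duality together with the double-commutant identity ``forces $W^{(p)}$ to have exactly three simple modules.'' Lin's decomposition \eqref{tridecomp} tells you which simple $W^{(p)} \otimes V^{(k)}$-modules occur \emph{inside} $X$; it establishes a bijection between the simple $V^{(k)}$-modules that appear in $X$ and the simple $W^{(p)}$-modules that appear in $X$. It does \emph{not} assert that every simple $W^{(p)}$-module occurs in $X$. You therefore only get that at least three simple $W^{(p)}$-modules exist, not that there are no others. The paper acknowledges this explicitly (``the commutant $W^{(p)}$, while not necessarily having exactly $3$ simple modules\ldots at least satisfies $\dim\ch_{W^{(p)}} = 3$'') and closes the gap by a different argument: first, the MLDE analysis of Gaberdiel--Hampapura--Mukhi gives $\dim \ch_{W^{(p)}} = 3$, so the $q$-characters of \emph{all} simple $W^{(p)}$-modules span a $3$-dimensional $\Gamma$-module, and the full $S$-matrix is block-diagonal with copies of $\rho(S)$; second, the first row of the $S$-matrix of a strongly regular VOA is everywhere nonzero (quantum dimensions are nonzero), which is incompatible with a nontrivial block-diagonal structure, forcing a single block and hence exactly three simple modules. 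Without substituting an argument of comparable strength for your appeal to duality, the proposal does not establish the theorem.

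A secondary point: you write that the identity $\chi_X = \sum_i \chi_{M_i}\chi_{M_i'}$ together with modularity of $\chi_X$ ``determines'' $\rho_{W^{(p)}}(S)$ and that ``one finds'' it equals \eqref{STmatrix}. That computation is not actually carried out, and the identity alone does not pin down a $3\times 3$ matrix. In the paper this is handled by the prior classification in Sections~\ref{s:sieve}--\ref{s:fibers}: once the numerical invariants $(m, c, h_1, h_2)$ match an entry of Figure~\ref{f:full3}, the $S$-matrix of the $U$-series is already known. Matching invariants to Figure~\ref{f:full3} (which you do at the end) is the legitimate route, but then the $S$-matrix claim should be cited from that source rather than asserted as a consequence of the character identity.
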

\begin{proof}
  The main point in the proof is the work of Gaberdiel, Hampapura, and Mukhi \cite{GHM}. These authors consider the properties of commutants of affine algebras such as $V^{(k)}$ from the perspective of MLDEs. They are able to show, in the framework that we are working, that the commutant $W^{(p)}$, while not necessarily having exactly $3$ simple modules (which is what we need), at least satisfies $\dim \ch_{W^{(p)}} =3$. That is, the space of $q$-characters for the simple $W^{(p)}$-modules is $3$-dimensional. Note that we know the $q$-characters of the simple $W^{(p)}$-modules that are contained in $X$ and that they furnish an irreducible representation $\rho$ of $\Gamma$. As a check, \cite{GHM} describes the MLDE satisfied by these characters and one can check from their Tables that the conformal weights of these modules are precisely those of the $U$-series that we have already calculated from a completely different perspective.

From these comments it follows that we can organize the simple modules for $W^{(p)}$ into sets of $3$ so that the corresponding matrix representation of $\Gamma$ on the $q$-characters looks like
\begin{eqnarray*}
\left(\begin{array}{ccc}\rho & 0 & 0 \\0 & \ddots & 0 \\0 & 0 & \rho\end{array}\right)
\end{eqnarray*}
In particular, the $S$-matrix has a similar block diagonal decomposition. However, at least in its action on the $1$-point functions for $W^{(p)}$ (genus $1$ conformal block), the $S$-matrix has first row with only nonzero entries. It follows immediately from the displayed block diagonal matrix that this can only happen if it is a $1 \times 1$ block matrix. That is, there are only $3$ simple modules for $W^{(p)}$. This completes the proof of the Theorem.
\end{proof}

\begin{ex}
  $k=8$. We see from Table \ref{TableS} that in this case $W^{(p)}_1=E_8$ and  $W^{(p)}=E_{8,2}$. This equality holds because $E_{8,2}$ itself has only three simple modules. Thus this $U$-series VOA is well-known, as is its simple current $M_1$ and the super VOA $E_{8,2}\oplus M_1$ (cf. \cite{DLMCurrents}). This case was first handled by Gerald H\"{o}hn \cite{Hoehn1}.

We obtain $14$  different VOAs in the $U$-series, including the know $E_{8, 2}$, as we see from Table \ref{TableS}. The other $13$ are probably new.

In the exceptional case when $k=0$ we have $X_1=0$, so it was not considered in \cite{GHM}. In any case H\"{o}hn already proved, under the natural assumption that $X=V^{\natural}$ is the Frenkel-Lepowsky-Meurman Moonshine module, that the commutant of $V^{(0)}$ is the Baby Monster VOA $\VB^{\natural}_{(0)}$. So this example also falls into the $U$-series.
\end{ex}

\begin{table}
\begin{tabular}{|c|c|c|}   \hline
$\#$ & $X_1$&k \\ \hline
5&$(A_{1,2}^{15})\oplus A_{1,2}$&$1$\\
7&$(A_{3,4}^3)A_{1,2}$&$1$\\
8&$(A_{5,6}C_{2,3})\oplus A_{1,2}$&$1$\\
10&$(D_{5, 8}) \oplus A_{1,2}$&$1$    \\ \hline
25&$(D_{4, 2}^2B_{2, 1}^3)\oplus B_{2, 1}$&$2$\\
26&$(A_{5, 2}^2A_{2,1}^2)\oplus B_{2,1}$& $2$\\
28&$(E_{6,4}A_{2,1})\oplus B_{2,1}$&$2$\\ \hline
39&$(D_{6,2}C_{4,1}B_{3,1})\oplus B_{3,1}$&$3$\\
40&$(A_{9,2}A_{4,1})\oplus B_{3,1}$&$3$\\ \hline
47&$(D_{8,2}B_{4,1})\oplus B_{4,1}$&$4$\\
48&$(C_{6,1}^2)\oplus B_{4,1}$&$4$\\\hline
53&$(E_{7,2}F_{4,1})\oplus B_{5,1}$&$5$\\ \hline
56&$(C_{10,1})\oplus B_{6,1}$&$6$\\ \hline
62&$(E_{8,2})\oplus B_{8,1}$&$8$\\ \hline
\end{tabular}
\caption{VOAs on the Schellekens list with $X_1$ having a summand $B_{k, 1}$ or $A_{1, 2}$. Columns give the Schellekens list number, the structure of the Lie algebra $X_1$ with levels, and the $k$-value.}\label{TableS}
\end{table}

\appendix
\section{Primes in progressions}
\label{a:bertrand}

In Section \ref{s:fibers} we cut down the possible character vectors for VOAs occuring in Theorem \ref{thmmain} by making use of hypergeometric formulas for the character vector. To prove nonintegrality of the vectors \emph{not} contributing to Theorem \ref{thmmain}, we produced nontrivial denominators in all but finitely many cases. Our argument relies on the existence of primes in progressions that lie in specific intervals. In this short appendix we explain how to use effective versions of the prime number theorem for primes in arithmetic progressions to prove what we need. These sorts of results, which go back to Bertrand's postulate that there is always a prime between $x$ and $2x$, are well-known to analytic number theorists. A recent paper \cite{BMOR}, which establishes effective versions of prime number theorems for arithmetic progressions, enables us to get the precise results necessary for our application to the problem of classifying VOAs as in Theorem \ref{thmmain}. To treat solutions $(m,x,y)$ to equation \eqref{eq1} with $y = a/5$ where $a$ is an integer coprime to $5$, we make use in Section \ref{s:fibers} of the following result:
\begin{thm}
  \label{t:bertrand5}
If $X > 6496$ then the interval $[X,\frac{28}{27}X]$ contains at least one prime from each congruence class $a\pmod{30}$ with $\gcd(a,30) = 1$.
\end{thm}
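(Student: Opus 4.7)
The plan is to combine an effective prime number theorem for arithmetic progressions with modulus $q=30$, of the kind systematically tabulated in \cite{BMOR}, with a finite computer verification for small $X$. Set $\varphi(30)=8$ and, for $a$ coprime to $30$, write
\[
\theta(x;30,a) \df \sum_{\substack{p \leq x\\ p \equiv a \pmod{30}}} \log p.
\]
It suffices to prove the inequality $\theta(\tfrac{28}{27}X;30,a) > \theta(X;30,a)$ for every $X > 6496$ and every residue $a$ coprime to $30$, since this forces the interval $[X,\tfrac{28}{27}X]$ to contain at least one prime in the required class.

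First I would invoke an explicit bound of the shape $\bigl|\theta(x;30,a) - \tfrac{x}{8}\bigr| \leq \varepsilon(x)\cdot x$, valid for all $x \geq x_0$, with a numerical function $\varepsilon(x)\to 0$ read off the tables in \cite{BMOR}. Since $\tfrac{28X}{27}-X=\tfrac{X}{27}$, the desired increment is bounded below by
\[
\theta\!\left(\tfrac{28}{27}X;30,a\right) - \theta(X;30,a) \geq \tfrac{X}{8\cdot 27} - 2\,\varepsilon\!\left(\tfrac{28}{27}X\right)\cdot \tfrac{28}{27}X,
\]
which is positive as soon as $\varepsilon(\tfrac{28X}{27}) < \tfrac{1}{448}$. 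The effective inputs of \cite{BMOR}, ultimately resting on explicit zero-free regions and zero-density estimates for the eight Dirichlet $L$-functions modulo $30$, guarantee this inequality as soon as $X$ exceeds some explicit threshold $X_1$, which with a little care can be kept moderate.

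The remaining step is a finite verification for $6496 < X \leq X_1$. Sieve the primes up to $\tfrac{28}{27}X_1$, partition them into the eight residue classes coprime to $30$, and for each $X$ in this range check directly that each of those eight classes is represented in the window $[X,\tfrac{28}{27}X]$. Since the window has length at least $\tfrac{6496}{27}\approx 240$, the density of primes in each arithmetic progression ensures that every class is met many times over, and the verification terminates in seconds.  Finally, the trivial bound $\psi(x;30,a)-\theta(x;30,a) = O(\sqrt{x}\log x)$ shows that the distinction between primes and prime powers is absorbed by the asymptotic step.

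The main obstacle is the calibration between these two halves of the argument: to keep $X_1$ small enough that the finite verification is practical, one must use the sharpest available explicit form of $\varepsilon(x)$ from \cite{BMOR}, and must check residue-class by residue-class that $6496$ is actually the correct threshold --- that is, that the worst of the eight classes admits a prime in $[X,\tfrac{28}{27}X]$ for every $X > 6496$, but fails for some slightly smaller $X$. This is where the specific numerical constant in the statement comes from, and it is the only delicate point of the proof.
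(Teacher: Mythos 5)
Your proposal follows essentially the same strategy as the paper's proof: cite an effective prime-number theorem for arithmetic progressions modulo $30$ from \cite{BMOR} to handle all sufficiently large $X$, then finish with a finite computer check down to $6496$. The only difference is cosmetic — the paper works with $\pi(X;30,a)$ and the bound $\abs{\pi(X;q,a)-\Li(X)/\phi(q)}<c_\pi(q)X/(\log X)^2$ from Theorem 1.3 of \cite{BMOR} (with $c_\pi(30)=0.0005661$, $x_\pi(30)=789693271$), whereas you work with $\theta(x;30,a)$ and a relative-error bound $\varepsilon(x)x$; both are available in \cite{BMOR} and lead to the same threshold-plus-verification structure, so the arguments are interchangeable.
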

\begin{proof}
  Let $\pi(X;q,a)$ denote the prime counting function for primes $p \equiv a\pmod{q}$. By Theorem 1.3 of \cite{BMOR}
  \[
  \abs{\pi(X;q,a) - \frac{\Li(X)}{\phi(q)}}<  c_\pi(q) \frac{X}{(\log X)^2}
\]
for all $X \geq x_{\pi}(q)$ for explicit constants $c_\pi(q)$ and $x_{\pi}(q)$ that are independent of $a$. We are interested in the function
\[
  F(X) = \pi(28X/27;q,a)-\pi(X;q,a).
\]
We must prove that there exists an $N$ such that $F(X) \geq 1$ for all $x > N$. Notice that if $X \geq x_\pi(q)$,
\begin{align*}
  \pi(28X/27;q,a) & > \frac{\Li(28X/27)}{\phi(q)}-\frac{28c_\pi(q)}{27} \frac{X}{(\log X+\log(28/27))^2}\\
-\pi(X;q,a) &> -\frac{\Li(X)}{\phi(q)} - c_\pi(q) \frac{X}{(\log X)^2}
\end{align*}
Therefore for $X \geq x_{\pi(q)}$,
\[
\abs{F(X)} > \frac{\Li(28X/27)-\Li(X)}{\phi(q)}-c_\pi(q)X\left(\frac{28}{27} \frac{1}{(\log X+\log(28/27))^2} -\frac{1}{(\log X)^2}\right)
\]
Taking $q = 30$, the paper \cite{BMOR} gives $x_\pi(30) = 789693271$ and $c_\pi(30) = 0.0005661$. One sees that for $X \geq x_{\pi}(30)$, $\abs{F(X)}$ is much larger than $1$. To prove the theorem for $6496 < X < x_{\pi}(30)$ we used a computer to verify it in the remaining cases. 
\end{proof}

Other solutions to equation \eqref{eq1} can be treated in a similar manner, where the relevant moduli are $6\cdot 7 = 42$ (primitive fibers) and $6\cdot 16 = 96$ (imprimitive fibers with $y \neq -1/2$); both of these moduli are treated in \cite{BMOR}.

\section{Affine algebras}
\label{a:affinealgebras}

Let $\mathcal{G}$ be a finite-dimensional simple Lie algebra of type $A$, $B$, $C$, $D$, $E$, $F$ or $G$ and Lie rank $\ell$ (dimension of a Cartan subalgebra). In this Appendix we discuss some properties of the universal vertex algebra $V(\mathcal{G}, k)$ of level $k$ and its simple quotient VOA $\mathcal{G}_{\ell,k}$, which is often called a WZW model when $k$ is a positive integer. For convenience, the constructions of these VOAs will be recalled below. For additional background, see \cite{Kac}, \cite{LL}.

\subsection{Statement of the main results}

There are two main results that we  intend to prove in this Appendix, both having to do with the conformal grading of WZW models. The first one we call the \emph{majorization Theorem}. As a referee has pointed out, this result may not be new, but we are unaware of a good reference:
\begin{thm}[Majorization]
  \label{thmmajor}
  Fix the type $\mathcal{G}$ and  Lie rank $\ell$. Regarding $\mathcal{G}_{\ell,k}$ as a  linear space equipped with its conformal $\ZZ$-grading, there are \emph{surjective} $\ZZ$-graded morphisms 
\[
  \mathcal{G}_{\ell,k'} \longrightarrow   \mathcal{G}_{\ell,k}
\]
for all positive integral $k'\geq k$.
\end{thm}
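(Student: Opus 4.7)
The plan is to reduce the claim to a monotonicity of graded dimensions and then derive the latter from the Weyl-Kac character formula. First I would observe that, by the Poincar\'e-Birkhoff-Witt theorem, the universal affine vertex algebra $V(\mathcal{G},k)$ is, as a conformally graded vector space, naturally isomorphic to $U(\hat{\mathcal{G}}_-)$ with $\hat{\mathcal{G}}_- = \mathcal{G}\otimes t^{-1}\CC[t^{-1}]$, and this isomorphism is independent of $k$ (each generator $X(-n)$ contributes conformal weight $n$ via the Sugawara construction regardless of the level). Let $V$ denote the resulting common graded vector space. Each simple quotient $\mathcal{G}_{\ell,k} = V(\mathcal{G},k)/J_k$ is then a further $\ZZ$-graded quotient of $V$, so the existence of a graded-linear surjection $\mathcal{G}_{\ell,k'}\twoheadrightarrow \mathcal{G}_{\ell,k}$ is equivalent to the numerical inequality $\dim(\mathcal{G}_{\ell,k})_n \leq \dim(\mathcal{G}_{\ell,k'})_n$ for all $n\geq 0$. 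Once the latter is established, any choice of graded linear map compatible with the dimension bound will do.

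Next I would invoke standard structure theory for $\hat{\mathcal{G}}$: for each positive integer $k$, the maximal proper $\hat{\mathcal{G}}$-submodule $J_k \subset V(\mathcal{G},k)$ is generated by a single singular vector $v_k$ at conformal weight $k+1$, one explicit choice being $v_k = e_\theta(-1)^{k+1}\mathbf{1}$ where $\theta$ is the highest root. A direct calculation with the bracket $[X(m),Y(n)] = [X,Y](m+n)+m\delta_{m+n,0}K\langle X,Y\rangle$ verifies that $v_k$ is annihilated by all of $\hat{\mathcal{G}}_{\geq 1}$ and by the positive root zero-modes precisely at level $K=k$. In particular $(J_k)_n = 0$ for $n \leq k$, so the desired inequality is trivial in those degrees, with both sides equal to $\dim V_n$. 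Only the range $n > k$ requires real work.

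For that range I would apply the Weyl-Kac character formula, which expresses the conformal character of $\mathcal{G}_{\ell,k}$ as
\[
\chi_{\mathcal{G}_{\ell,k}}(q) = \frac{1}{\prod_{n\geq 1}(1-q^n)^{\dim\mathcal{G}}}\sum_{w\in\hat{W}}\epsilon(w)\, q^{N(w,k)},
\]
where $N(w,k)$ records the conformal-weight shift of $w\cdot k\Lambda_0$ relative to $k\Lambda_0$, and the denominator is manifestly $k$-independent. An equivalent packaging uses the Kac-Peterson string functions $c^{(k)}_\mu(q)$, which are non-negative integral power series whose coefficientwise limit as $k\to\infty$ recovers the string function of the universal module $V$; summing over $\mu$ converts monotonicity of the string functions in $k$ into the required monotonicity of $\dim(\mathcal{G}_{\ell,k})_n$.

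The main obstacle I expect is the coefficientwise monotonicity of the string functions themselves, since the Weyl-Kac numerator is an alternating sum and the naive term-by-term comparison fails. In the $\widehat{\mathfrak{sl}_2}$ case this reduces to classical Andrews-Gordon identities, and in general it should follow from the explicit Kac-Peterson product formulas combined with a careful combinatorial analysis of affine Weyl group orbits inside the dominant chamber at level $k$ versus level $k'$. This combinatorial step is the technical heart of the argument and is where the bulk of the work would lie; once it is in hand, the surjection itself is assembled by choosing a graded-linear section of $V \twoheadrightarrow \mathcal{G}_{\ell,k}$ and composing with $V \twoheadrightarrow \mathcal{G}_{\ell,k'}$ restricted appropriately.
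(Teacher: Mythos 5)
Your reduction of the statement to the dimension inequality $\dim(\mathcal{G}_{\ell,k})_n \leq \dim(\mathcal{G}_{\ell,k'})_n$ is correct, and your identification of the common graded vector space $V \cong S(\hat{\mathcal{G}}^-)$ and the generator $e_\theta(-1)^{k+1}\mathbf{1}$ of the maximal ideal are exactly the right ingredients. However, you then pivot to proving the dimension inequality via the Weyl--Kac character formula and monotonicity of string functions in the level, and you explicitly acknowledge that this combinatorial step is unresolved in your proposal (``the naive term-by-term comparison fails \ldots{} this combinatorial step is the technical heart of the argument''). That is a genuine gap: the proposal reduces the theorem to a statement you do not prove and whose difficulty you (rightly) flag as substantial.

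The gap can be closed without any character-theoretic combinatorics by noticing that the radical ideals are literally \emph{nested} inside the common graded space $S(\hat{\mathcal{G}}^-)$, not merely comparable in dimension. Since $e_\theta(-1)^{k+1}\mathbf{1}$ is a singular vector, the submodule it generates is $\mathcal{U}(\hat{\mathfrak{n}}^-)\,e_\theta(-1)^{k+1}\mathbf{1}$, and the operators from $\hat{\mathfrak{n}}^- = \mathfrak{n}^- \oplus \hat{\mathcal{G}}^-$ act on $S(\hat{\mathcal{G}}^-)$ by level-independent formulas (the central extension term only enters when a positive and a negative mode pair off). Because $e_\theta(-1)^{k'+1}\mathbf{1} = e_\theta(-1)^{k'-k}\cdot e_\theta(-1)^{k+1}\mathbf{1}$ lies in $\Rad(b_{V(\mathcal{G},k)})$ and the latter is closed under these level-independent operators, one gets the containment $\Rad(b_{V(\mathcal{G},k')}) \subseteq \Rad(b_{V(\mathcal{G},k)})$ of graded subspaces of $S(\hat{\mathcal{G}}^-)$. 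The desired surjection is then just the canonical graded quotient map $\mathcal{G}_{\ell,k'} = V/\Rad_{k'} \twoheadrightarrow V/\Rad_{k} = \mathcal{G}_{\ell,k}$, with no need to ever compare characters. This is the route the paper takes, and it is both shorter and yields more (a canonical graded surjection, not just an inequality of Hilbert series).
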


The second result is more specialized:
\begin{thm}
  \label{thmBC}
  For all positive integers $k, \ell$, we have 
\[
\dim (C_{\ell, k})_2 \geq \dim (B_{\ell,1})_2
\]
with equality only if $\ell=2$.
\end{thm}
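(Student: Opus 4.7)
The plan is to compute $\dim(\mathcal{G}_{\ell, k})_2$ exactly for $\mathcal{G} \in \{B_\ell, C_\ell\}$ using the structure of the maximal proper ideal in the universal affine vertex algebra $V(\mathcal{G}, k)$, and then to reduce the claim to a Weyl dimension calculation. Setting $d := \dim \mathcal{G} = 2\ell^2 + \ell$, the PBW theorem applied to $V(\mathcal{G}, k) = U(\widehat{\mathcal{G}}^-)\mathbf{1}$ shows that $V(\mathcal{G}, k)_2$ is spanned by the two families $a(-2)\mathbf{1}$ and $a(-1)b(-1)\mathbf{1}$ and is isomorphic as a $\mathcal{G}$-module (via the zero modes) to $\mathcal{G} \oplus \Sym^2 \mathcal{G}$, of dimension $d + d(d+1)/2$.

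For any positive integer level $k$, the maximal proper ideal $J_k \subset V(\mathcal{G}, k)$ is generated as a $\widehat{\mathcal{G}}$-module by the singular vector $v_{k+1} := e_\theta(-1)^{k+1}\mathbf{1}$ of conformal weight $k+1$, where $\theta$ is the highest root (a classical theorem of Kac). Since $v_{k+1}$ is annihilated by every positive Fourier mode, $J_k$ lives in conformal weights $\geq k+1$; moreover the zero-mode $\mathcal{G}$-submodule generated by $v_{k+1}$ is a highest-weight vector of weight $(k+1)\theta$ sitting inside the $\Sym^{k+1}\mathcal{G}$-summand of $V(\mathcal{G}, k)_{k+1}$, where the $(k+1)\theta$-weight space is one-dimensional and so this submodule is exactly $L_\mathcal{G}((k+1)\theta)$. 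Consequently
\[
  \dim (\mathcal{G}_{\ell, k})_2 = \begin{cases} d + \binom{d+1}{2} & \text{if } k \geq 2, \\ d + \binom{d+1}{2} - \dim L_\mathcal{G}(2\theta) & \text{if } k = 1. \end{cases}
\]
For $k \geq 2$ the required inequality is then immediate and strict, since $\dim L_{B_\ell}(2\theta_B) > 0$.

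For $k = 1$ the theorem reduces to showing $\dim L_{B_\ell}(2\theta_B) \geq \dim L_{C_\ell}(2\theta_C)$ with equality iff $\ell = 2$. Here $\theta_C = 2\epsilon_1$, and since each symmetric power of the defining representation of $C_\ell$ is irreducible we have $L_{C_\ell}(2\theta_C) \cong \Sym^4(\CC^{2\ell})$, giving $\dim L_{C_\ell}(2\theta_C) = \binom{2\ell+3}{4}$. On the $B_\ell$ side with $\ell \geq 2$ we have $\theta_B = \epsilon_1 + \epsilon_2$, and applying Weyl's dimension formula to $\lambda = 2\epsilon_1 + 2\epsilon_2$ (handling separately the $(1,j)$, $(2,j)$ and $(i,j)$ with $i \geq 3$ subranges of the positive roots $\epsilon_i$ and $\epsilon_i \pm \epsilon_j$) telescopes to
\[
  \dim L_{B_\ell}(2\theta_B) = \frac{(2\ell+3)(2\ell+1)(\ell+1)(\ell-1)}{3}.
\]
The ratio of the two dimensions works out to $2(\ell-1)/\ell$, yielding equality at $\ell = 2$ (consistent with the exceptional isomorphism $B_2 \cong C_2$) and strict inequality for $\ell \geq 3$.

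The only genuinely calculational step is the Weyl dimension evaluation on $B_\ell$; it is elementary but involves three separate factorizations of the product over positive roots before the telescoping collapses to the closed form above. Everything else — the identification of $(J_k)_2$, the reduction to comparing $L(2\theta)$ dimensions, and the check that $L((k+1)\theta)$ has multiplicity one in the relevant PBW piece — is structural bookkeeping built on the single fact that $J_k$ is cyclically generated by the singular vector of weight $k+1$.
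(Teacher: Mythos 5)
Your proof is correct, and in fact it exposes and repairs an error in the paper's own argument. Both approaches reduce the problem to computing the degree-$2$ piece of the maximal ideal of $V(\mathcal{G},1)$, which by Kac's theorem is the $\mathcal{U}(\hat{\mathcal{G}})$-submodule generated by the singular vector $e_\theta(-1)^2\mathbf{1}$. You correctly observe that the degree-$2$ part of this ideal is the full $\mathcal{U}(\mathcal{G})$-orbit of the singular vector, i.e., the irreducible $\mathcal{G}$-module $L_{\mathcal{G}}(2\theta)$, whose dimension you then evaluate by the Weyl formula. The paper, by contrast, claims in the proof of Lemma~\ref{lemdimwt2} that this piece equals $\hat{\mathcal{G}}_0\, e_\theta(-1)^2\mathbf{1}$ --- a \emph{single} application of the Lie algebra to the singular vector plus its span --- and so arrives at the count $1+N_{\mathcal{G}}$ ($=4\ell-5$ for $B_\ell$, $=2\ell-1$ for $C_\ell$). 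This conflates $\hat{\mathcal{G}}_0\,v$ with $\mathcal{U}(\hat{\mathcal{G}}_0)\,v$ and is incorrect: for example, $f_\theta(0)^2 e_\theta(-1)^2\mathbf{1}$ already produces the independent vectors $f_\theta(-1)e_\theta(-1)\mathbf{1}$ and $h_\theta(-1)^2\mathbf{1}$, which are not counted. A direct check of $\ell=3$ confirms your figures: the free-fermion realization of $B_{3,1}$ gives $\dim(B_{3,1})_2 = \binom{7}{4}+7^2 = 84$, so the radical's weight-$2$ piece has dimension $252-84 = 168$, which matches $\dim L_{B_3}(2\theta_B) = \tfrac{(2\ell+3)(2\ell+1)(\ell+1)(\ell-1)}{3}\big|_{\ell=3}$ and not the paper's value $7$. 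Similarly $\dim L_{C_\ell}(2\theta_C) = \binom{2\ell+3}{4}$ as you say. The statement of Theorem~\ref{thmBC} survives --- your calculation gives $\dim(C_{\ell,1})_2 - \dim(B_{\ell,1})_2 = \tfrac{(2\ell+3)(2\ell+1)(\ell+1)(\ell-2)}{6}$, which is $\geq 0$ with equality only at $\ell=2$ --- but Lemma~\ref{lemdimwt2} is false as stated and Remark~\ref{rmkBC} is a substantial understatement. One minor additional improvement in your write-up: for $k\geq 2$ you bypass the Majorization Theorem entirely by noting that the ideal starts in conformal weight $k+1\geq 3$, so $\dim(C_{\ell,k})_2$ is the full unreduced dimension; this is simpler and also gives strictness directly. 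Your argument should replace the paper's proof of Lemma~\ref{lemdimwt2}.
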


\begin{rmk}
  \label{rmkBC}
  The proof will show that
\[
\dim (C_{\ell, k})_2 -\dim (B_{\ell,1})_2 \geq  2\ell-4.
\]
\end{rmk}

\subsection{The universal affine VOA $V(\mathcal{G},k)$}\label{SSAppB2}
Let $\mathcal{G}$ be a finite-dimensional nonabelian simple Lie algebra with Killing form $\langle ,\rangle$. The affine algebra associated to $\mathcal{G}$ is the Lie algebra defined by 
\[
\mathcal{G} \otimes \CC[t, t^{-1}] \oplus \CC K
\]
where $K$ is a central element and the  nontrivial brackets are
\begin{eqnarray*}
[a\otimes t^m, b\otimes t^n]\df [a, b]\otimes t^{m+n}+m\delta_{m+n, 0}\langle a, b \rangle K
\end{eqnarray*}
for $a,b \in \mathcal{G}$. There is a natural triangular decomposition
\[
\hat{\mathcal{G}} \df \hat{\mathcal{G}}^+ \oplus \hat{\mathcal{G}}_0 \oplus \hat{\mathcal{G}}^- 
\]
with
\begin{align*}
 \hat{\mathcal{G}}^{\pm} \df& \mathcal{G} \otimes t^{\pm 1}\CC[t^{\pm 1}]\\
 \hat{\mathcal{G}}_0 \df& \mathcal{G} \otimes t^0 \oplus \CC K \cong \mathcal{G} \oplus \CC.
\end{align*}
$\hat{\mathcal{G}}$ is also naturally $\ZZ$-graded by:
\[
\hat{\mathcal{G}}=   \bigoplus_{n\in\mathbf{Z}} \hat{\mathcal{G}}_n,\quad \hat{\mathcal{G}}_n \df \mathcal{G} \otimes t^{-n} \quad (n \neq 0), 
\]
so that $[\hat{\mathcal{G}}_m, \hat{\mathcal{G}}_n] \subseteq \hat{\mathcal{G}}_{m+n}$.

Choose any scalar (the \emph{level}) $k{\in}\CC$, and let $\CC_k$ denote the $1$-dimensional $(\mathcal{G}^{+}\oplus\hat{\mathcal{G}}_0)$-module defined as follows: $\mathcal{G}^{+}$ acts as $0$; $\mathcal{G}=\mathcal{G}{\otimes}t^0$
acts as $0$; $K$ acts as multiplication by the level $k$. The corresponding \emph{Verma-module} is the induced module
\[
V=  V(\mathcal{G}, k)\df \mathcal{U}(\hat{\mathcal{G}})\otimes_{\mathcal{U}(\hat{\mathcal{G}}^{+}{\oplus}\hat{\mathcal{G}}_0)} \CC_k.
\]
where, here and below, $\mathcal{U}$ denotes universal enveloping algebra. Using the PBW theorem and the triangular decomposition for $\hat{\mathcal{G}}$, one sees that $V$ is linearly isomorphic to the symmetric algebra $S(\hat{\mathcal{G}}^{-})$. The \emph{conformal grading} on the symmetric algebra is related to the grading on $\hat{\mathcal{G}}$ in which $a\otimes t^{-n}$ ($n \geq 1$, $a\in \mathcal{G}$) has weight (i.e., degree) $n$ and the \emph{vacuum element} $\mathbf{1}\df 1\otimes1$ has weight $0$.
\begin{equation}
  \label{symm}
V=V(\mathcal{G},k)\cong S(\hat{\mathcal{G}}^{-})=\oplus_{n\geq 0} S(\hat{\mathcal{G}}^{-})_n
\end{equation}
where
\begin{align*}
S(\hat{\mathcal{G}}^{-})_0=&\CC\mathbf{1}, & S(\hat{\mathcal{G}}^{-})_1=& \mathcal{G}\otimes t^{-1}.
\end{align*}

As long as $k$ is a positive integer (the only case that we care about) then $V$ carries the structure of a VOA, and the grading on $V$ induced by the $L(0)$-operator of the Virasoro element is the conformal grading we just described.
An obvious -- though important -- point is that this is \emph{independent of the level $k$}.

\subsection{The quotient VOA $L(\mathcal{G}, k)$ and the Majorization Theorem}
We continue to discuss the VOAs $V\df V(\mathcal{G}, k)$, always with $k$ a positive integer. Up to scalars, $V$ admits a unique nonzero, invariant, bilinear form $b_V$ by a Theorem of Li \cite{LiSymmetric}, however $b_V$ is always degenerate for the values of $k$ under consideration. The \emph{radical} of $b_V$ is the unique maximal $2$-sided ideal in $V$, and we denote the simple quotient VOA by
\[
L(\mathcal{G}, k)\df V(\mathcal{G}, k)/\textrm{Rad}(b_V).
\]
If $\mathcal{G}$ has type $A, B, \ldots, F, G$ and Lie rank $\ell$ we will often denote this VOA by $\mathcal{G}_{\ell,k}$.

A fundamental Theorem for us is  the determination of $\textrm{Rad}b_V$ by Kac \cite{Kac}. See also \cite{LL}, Proposition 6.6.17. To state the result concisely, we need some notation. Let $\Phi$ be the root system of $\mathcal{G}$ and let $\theta \in \Phi$ be the (unique) positive root of maximal height. Let $S_{\theta}\subseteq \mathcal{G}$ be a fundamental $\mathfrak{sl}_2$-subalgebra determined by $\theta$ having a Chevalley basis $\{e_{\theta}, f_{\theta}, h_{\theta}\}$.
\begin{prop}[Kac]\label{propR}
  We have 
 \[
 \Rad(b_V)=\mathcal{U}(\hat{\mathcal{G}})e_{\theta}(-1)^{k+1}\mathbf{1}.
 \]
 $\hfill\Box$
\end{prop}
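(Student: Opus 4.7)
The plan is to identify $v := e_\theta(-1)^{k+1}\mathbf{1}$ as a singular vector in the generalized Verma (Weyl) module $V = V(\mathcal{G},k)$, to show that the submodule $M := \mathcal{U}(\hat{\mathcal{G}})\cdot v$ is the unique maximal proper $\hat{\mathcal{G}}$-submodule of $V$, and then to identify $M$ with $\Rad(b_V)$ using invariance of $b_V$.

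First I would verify that $v$ is a singular vector. It suffices to check annihilation by the affine Chevalley generators of the positive nilpotent part of $\hat{\mathcal{G}}$: namely $e_{\alpha_i}(0)v = 0$ for each simple root $\alpha_i$ of $\mathcal{G}$, and $f_\theta(1)v = 0$. For a simple $\alpha_i$, the bracket $[e_{\alpha_i}, e_\theta] = 0$ (since $\alpha_i + \theta$ is not a root), so $e_{\alpha_i}(0)v = e_\theta(-1)^{k+1}e_{\alpha_i}(0)\mathbf{1} = 0$. For the critical identity, normalize $\langle e_\theta, f_\theta\rangle = 1$ so that $[f_\theta(1), e_\theta(-1)] = -h_\theta(0) + K$; then $K\mathbf{1} = k\mathbf{1}$ combined with $h_\theta(0)e_\theta(-1)^{k-j}\mathbf{1} = 2(k-j)e_\theta(-1)^{k-j}\mathbf{1}$ gives the telescoping evaluation
\[
f_\theta(1)v \;=\; \sum_{j=0}^{k} e_\theta(-1)^{j}\bigl(-h_\theta(0)+K\bigr)e_\theta(-1)^{k-j}\mathbf{1} \;=\; \sum_{j=0}^{k}\bigl(k - 2(k-j)\bigr)\,e_\theta(-1)^{k}\mathbf{1} \;=\; 0,
\]
which is precisely the level-$k$ integrability relation.

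Next I would argue that $V/M$ is an integrable highest-weight $\hat{\mathcal{G}}$-module. Writing the affine Chevalley generators as $f_0 := e_\theta(-1)$ and $f_i := f_{\alpha_i}(0)$ for $i \geq 1$, the image $\bar{\mathbf{1}}$ in $V/M$ satisfies $f_0^{k+1}\bar{\mathbf{1}} = 0$ (by definition of $M$) and $f_i\bar{\mathbf{1}} = 0$ for $i \geq 1$ (since $V = V(\mathcal{G},k)$ is induced from a module on which all of $\mathcal{G}$ acts trivially on $\mathbf{1}$ at mode zero), together with the positive-nilpotent and Cartan relations of the dominant integral weight $k\Lambda_0$. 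These are exactly Kac's integrability relations, so $V/M$ is an integrable highest-weight module of weight $k\Lambda_0$; by Kac's theorem such a module is irreducible (isomorphic to the basic module $L(k\Lambda_0)$). Consequently $M$ is the unique maximal proper $\hat{\mathcal{G}}$-submodule of $V$.

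Finally, to identify $M$ with $\Rad(b_V)$: by Li's theorem $b_V$ is contragredient-invariant and $b_V(\mathbf{1}, \mathbf{1}) \neq 0$, so $\Rad(b_V)$ is a proper $\hat{\mathcal{G}}$-submodule of $V$. Conversely, the invariance relation $b_V(v, a(-n)w') = b_V(\omega(a)(n)v, w')$ for any $a \in \mathcal{G}$ and $n \geq 1$ (with $\omega$ the Cartan involution) moves negative-mode operators from the second argument to the first as positive-mode operators, which annihilate the singular vector $v$; iterating on PBW monomials together with orthogonality of distinct conformal weights shows $b_V(v, V) = 0$, hence $v \in \Rad(b_V)$ and $M \subseteq \Rad(b_V)$. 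Maximality of $M$ then forces $M = \Rad(b_V)$. The main obstacle is the integrability-implies-irreducibility step, which rests on Kac's nontrivial theorem for integrable highest-weight modules over affine Kac--Moody algebras; the commutator identities in the first step are straightforward once normalizations are fixed.
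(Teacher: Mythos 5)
Your proof is correct, and it reconstructs the argument the paper does \emph{not} spell out: the paper states this result with a $\square$ and cites Kac and Lepowsky--Li (Prop.~6.6.17), so there is no internal proof to compare against. Your three-step structure — (1) verify $v=e_\theta(-1)^{k+1}\mathbf{1}$ is singular, (2) invoke Kac's theorem that the quotient of the generalized Verma module by the submodule generated by $v$ is the irreducible integrable module $L(k\Lambda_0)$, hence $M=\mathcal{U}(\hat{\mathcal{G}})v$ is the unique maximal proper submodule, (3) show $v\in\Rad(b_V)$ by invariance and conclude by maximality — is exactly the standard argument. The commutator computation $[f_\theta(1),e_\theta(-1)]=-h_\theta(0)+K$ under the normalization $\langle e_\theta,f_\theta\rangle=1$ is right, the telescoping sum $\sum_{j=0}^{k}(2j-k)=0$ is right, and your identification of $f_0=e_\theta(-1)$, $f_i=f_{\alpha_i}(0)$ with the integrability exponents $\langle k\Lambda_0,\alpha_0^\vee\rangle=k$, $\langle k\Lambda_0,\alpha_i^\vee\rangle=0$ is the correct translation into Kac's setup.

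Two small remarks. First, you are right to flag that the real mathematical content is Kac's theorem on integrable highest-weight modules (Kac, \emph{Infinite Dimensional Lie Algebras}, Cor.~10.4 and Lemma~10.1); the singular-vector and radical computations are routine by comparison. Second, the invariance relation for the VOA bilinear form on $V(\mathcal{G},k)$ is simply $b_V(a(n)u,w)=-b_V(u,a(-n)w)$ for $a\in V_1$ with $L(1)a=0$ — no Cartan involution enters — but your version with $\omega$ is harmless since $\omega(a)(n)\in\hat{\mathcal{G}}^+$ for $n\geq 1$ still annihilates the singular vector. Also, once you move a single positive mode $a^1(m_1)$ ($m_1\geq 1$) onto $v$, you already get zero; no iteration over the full PBW monomial is needed. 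These are cosmetic points; the proof is sound.
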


We are now ready for:

\begin{proof}[Proof of Majorization Theorem \ref{thmmajor}] We have seen that, considered as just a $\ZZ$-graded linear space, $V(\mathcal{G}, k)$ coincides with the graded symmetric algebra \eqref{symm} which does not depend on $k$. From Kac's Theorem it is clear that the radical ideals $R_{\ell,k}\df b_{V(\mathcal{G}, k)}$ are graded subspaces that satisfy
\[
R_{\ell, k'}\subseteq R_{\ell, k} 
\]
for $k' \geq k$. These containments induce surjections of graded linear spaces 
\[
L(\mathcal{G}, k') \longrightarrow L(\mathcal{G}, k)
\]
and this is the statement of Theorem \ref{thmmajor}.
\end{proof}

\subsection{Proof of Theorem \ref{thmBC}}
In order to prove Theorem \ref{thmBC} we may assume that $\ell \geq 3$, and we shall do this. Furthermore, by applying Theorem \ref{thmmajor} we are reduced to proving Theorem \ref{thmBC} in the case $k = 1$, and we shall from now on also assume that this is the case.

Thus we must compare the dimensions of the weight $2$ pieces of the VOAs $C_{\ell,1}$ and $B_{\ell, 1}$. According to Proposition \ref{propR} these are given by the weight $2$ pieces of the graded quotients
\begin{align*}
  C_{\ell,1}=&V(C_{\ell},1)/\mathcal{U}(\hat{\mathcal{C_{\ell}}})e_{\theta C}(-1)^{2}\mathbf{1},\\
  B_{\ell,1}=&V(B_{\ell},1)/\mathcal{U}(\hat{\mathcal{B_{\ell}}})e_{\theta B}(-1)^{2}\mathbf{1},
\end{align*}
where $\theta B$ and $\theta C$ are the highest roots for the  root systems of type $B_{\ell}$ and $C_{\ell}$ respectively.

From the description of the underlying  $\ZZ$-graded space of $V(\mathcal{G},1)$ as a graded symmetric algebra presented in Subsection \ref{SSAppB2}, and because $B_{\ell}$, $C_{\ell}$ are Lie algebras of equal dimension, it follows that the degree $2$ pieces of $V(B_{\ell},1)$ and $V(C_{\ell},1)$ are also equal. Therefore, in order to prove Theorem \ref{thmBC} we must compare the dimensions of the degree $2$ pieces $(\mathcal{U}(\hat{\mathcal{B_{\ell}}})e_{\theta B}(-1)^{2}\mathbf{1})_2$ and $(\mathcal{U}(\hat{\mathcal{C_{\ell}}})e_{\theta C}(-1)^{2}\mathbf{1})_2$. Indeed, we shall prove the next result (and Remark \ref{rmkBC} also follows from this):
\begin{lem}
  \label{lemdimwt2}
 We have
\[
\dim (\mathcal{U}(\hat{\mathcal{B_{\ell}}})e_{\theta B}(-1)^{2}\mathbf{1})_2- \dim(\mathcal{U}(\hat{\mathcal{C_{\ell}}})e_{\theta C}(-1)^{2}\mathbf{1})_2 =2\ell-4.
\]
\end{lem}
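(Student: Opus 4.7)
The plan is to identify each ideal's weight-$2$ piece with a concrete irreducible module over the respective finite-dimensional simple Lie algebra, and then to compute and subtract dimensions via Weyl's formula. For $\mathcal{G}$ equal to $B_\ell$ or $C_\ell$, set $v_\mathcal{G} \df e_{\theta_\mathcal{G}}(-1)^2 \mathbf{1}$. Using Proposition \ref{propR} together with the commutation relations in $\hat{\mathcal{G}}$ (exploiting that $\theta_\mathcal{G}+\alpha$ is not a root for any positive root $\alpha$, and that $\langle a,e_{\theta_\mathcal{G}}\rangle=0$ for $a$ in the Cartan), I would verify that $v_\mathcal{G}$ is annihilated by $\hat{\mathcal{G}}^+$. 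Its zero-mode Cartan weight is $2\theta_\mathcal{G}$, and it is killed by $e_\alpha(0)$ for every positive root $\alpha$, so $v_\mathcal{G}$ is a $\mathcal{G}$-highest weight vector of weight $2\theta_\mathcal{G}$.

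Applying PBW to $\mathcal{U}(\hat{\mathcal{G}})=\mathcal{U}(\hat{\mathcal{G}}^-)\mathcal{U}(\hat{\mathcal{G}}_0)\mathcal{U}(\hat{\mathcal{G}}^+)$ and using this singular property, the conformal weight-$2$ piece of $\mathcal{U}(\hat{\mathcal{G}})v_\mathcal{G}$ reduces to $\mathcal{U}(\mathcal{G})\cdot v_\mathcal{G}\subset S^2(\mathcal{G})\subset V(\mathcal{G},1)_2$. Since $S^2(\mathcal{G})$ is a semisimple finite-dimensional $\mathcal{G}$-module and $v_\mathcal{G}$ is a highest weight vector there of weight $2\theta_\mathcal{G}$, one concludes
\[
\bigl(\mathcal{U}(\hat{\mathcal{G}})v_\mathcal{G}\bigr)_2\cong L_\mathcal{G}(2\theta_\mathcal{G}),
\]
the irreducible finite-dimensional $\mathcal{G}$-module of highest weight $2\theta_\mathcal{G}$.

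The dimension computations then proceed separately for each type. For $C_\ell$, $\theta_C=2\omega_1$ and the target module is $L_{C_\ell}(4\omega_1)\cong\Sym^4(\CC^{2\ell})$ --- it is classical for $\mathfrak{sp}(2\ell)$ that all symmetric powers of the defining representation remain irreducible --- of dimension $\binom{2\ell+3}{4}$. For $B_\ell$ with $\ell\geq 3$, $\theta_B=e_1+e_2=\omega_2$, and I would compute $\dim L_{B_\ell}(2\omega_2)$ via Weyl's dimension formula using $\rho=\sum_{i=1}^\ell(\ell-i+\tfrac{1}{2})e_i$ and the standard positive roots $e_i$ and $e_i\pm e_j$. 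Because $\lambda+\rho$ differs from $\rho$ only in the $e_1$ and $e_2$ coordinates, most factors in numerator and denominator cancel, yielding a compact polynomial expression in $\ell$.

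The main obstacle is the final algebraic simplification of the difference $\dim L_{B_\ell}(2\omega_2)-\binom{2\ell+3}{4}$. The hard part will be reducing this expression to the asserted closed form, and small-$\ell$ checks ($\ell=3,4$) serve both as a sanity test and as a guide to the cancellation pattern in general. These checks would also confirm whether the stated precise identity $2\ell-4$ holds (as in Lemma \ref{lemdimwt2}) or only the weaker bound $\geq 2\ell-4$, which is all that is needed to deduce Remark \ref{rmkBC} and hence Theorem \ref{thmBC}.
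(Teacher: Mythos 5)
Your reduction of the weight-$2$ piece of the radical ideal to $\mathcal{U}(\mathcal{G})v_{\mathcal{G}}\cong L_{\mathcal{G}}(2\theta)$ is correct, and it exposes an error in the paper's own proof. The paper sets up the same PBW reduction but then asserts the weight-$2$ piece equals the \emph{linear} span $\hat{\mathcal{G}}_0\,e_\theta(-1)^2\mathbf{1}$, i.e.\ single applications of zero modes $b(0)$, and reads off the dimension $1+N_{\mathcal{G}}$. That span is not a $\mathcal{G}$-submodule; the weight-$2$ piece is the full $\mathcal{U}(\hat{\mathcal{G}}_0)$-cyclic module on $v_{\mathcal{G}}$, and since $v_{\mathcal{G}}$ is a highest-weight vector of weight $2\theta$ sitting in the finite-dimensional module $V(\mathcal{G},1)_2\cong\mathcal{G}\oplus\Sym^2(\mathcal{G})$, this submodule is $L_{\mathcal{G}}(2\theta)$, as you say. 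Already for $\mathcal{G}=A_1$ this gives dimension $5$ rather than $1+N_{A_1}=1$, consistent with $\dim(A_{1,1})_2 = 9-5 = 4$ (readable off the lattice realization $V_{\sqrt{2}\ZZ}$).

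Carrying out your Weyl-formula computation gives $\dim L_{B_\ell}(2\omega_2)=\tfrac{1}{3}(2\ell+3)(2\ell+1)(\ell+1)(\ell-1)$ and $\dim L_{C_\ell}(4\omega_1)=\binom{2\ell+3}{4}=\tfrac{1}{6}(2\ell+3)(2\ell+1)(\ell+1)\ell$, so the difference is $\tfrac{1}{6}(2\ell+3)(2\ell+1)(\ell+1)(\ell-2)$. For $\ell=3$ this is $168-126=42$, not $2\cdot 3-4=2$. Thus Lemma \ref{lemdimwt2} as stated is false. Nevertheless the quartic is nonnegative for $\ell\geq 2$, vanishes exactly at $\ell=2$, and dominates $2\ell-4$ for $\ell\geq 3$, so Remark \ref{rmkBC} and Theorem \ref{thmBC} remain true; your closing caution that only the inequality is needed, and your proposal to test small $\ell$, were exactly right and would have caught the discrepancy immediately.
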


The remainder of this Appendix proceeds with the proof of this Lemma. It amounts to a fairly elaborate computation of the dimensions of the $2$ graded spaces in question.

We begin with any simple Lie algebra $\mathcal{G}$. $\mathcal{U}(\hat{\mathcal{G}})$ is spanned by elements of the form
\begin{eqnarray*}
\{a^1(-m_1)\cdots a^r(-m_r) b^1(0)\cdots b^s(0)c^1(n_1)\cdots c^t(n_t) \mid m_i, n_i\geq 1 \}
\end{eqnarray*}
where the Lie algebra elements $a^i, b^i, c^i$ span $\mathcal{G}$, the $m_i$ and $n_i$ are decreasing sequences of integers, $c^i(n_i)$ is the operator induced by $c^i\otimes t^{n_i}$, etc. Now it is well-known that the radical spaces  $(\mathcal{U}(\hat{\mathcal{G}})e_{\theta }(-1)^{2}\mathbf{1})$ contain \emph{no} nonzero elements of degree less than $2$. Thus the weight $2$ piece is the lowest nonzero part. Because the operators $c^i(n_i)$ are \emph{lowering} operators for
$n_i>0$ they must annihilate $e_{\theta}(-1)^2\mathbf{1}$ (a result that can be checked directly). Similarly, the $b^i(0)$ are weight $0$ operators and the $a^i(-m_i)$ are \emph{raising} operators for $m_i>0$. The upshot is that we have
\[
(\mathcal{U}(\hat{\mathcal{G}})e_{\theta}(-1)^{2}\mathbf{1})_2=\hat{\mathcal{G}}_0e_{\theta}(-1)^2\mathbf{1}.
\]

For $b\in\mathcal{G}$ we also have
\[
b(0)e_{\theta}(-1)^2\mathbf{1}{=}2 [b, e_{\theta}](-1)e_{\theta}
\]
and as $b$ ranges over $\mathcal{G}$ we generate in this way $e_{\theta}(-1)^2$ as well as $e_{\gamma}(-1)e_{\theta}$ for positive roots $\alpha$, $\gamma$ such that $\gamma+\alpha = \theta$. Let the number of such positive roots $\gamma$ be denoted by $N=N_{\mathcal{G}}$. This argument shows that
\[
 \dim (\mathcal{U}(\hat{\mathcal{G}})e_{\theta}(-1)^{2}\mathbf{1})_2=1+N_{\mathcal{G}}.
\]

There is a representation-theoretic meaning of the integer $N$. Recall the $\mathfrak{sl}_2$ Lie algebra $\mathcal{S}\df \mathcal{S}_{\theta}=\langle e_{\theta}, f_{\theta}, h_{\theta} \rangle \subseteq \mathcal{G}$, and decompose the adjoint representation as a direct sum of irreducible $\mathcal{S}$-modules. We assert that
\begin{equation}
  \label{decomp}
\mathcal{G}=C_{\mathcal{G}}(\mathcal{S})\oplus\mathcal{S}\oplus_{i=1}^{N} V_i
\end{equation}
where $C_{\mathcal{G}}(\mathcal{S})$ is the \emph{centralizer} of $\mathcal{S}$ and where each $V_i$ is $2$-dimensional. Indeed, the $1$-dimensional summands are all contained in the centralizer and there is at least one $3$-dimensional summand, namely, $\mathcal{S}$ itself. Note that a Cartan subalgebra $\mathcal{H}$ is contained in the sum of these two modules. Let $V_i \subseteq \mathcal{G}$ be any other nonzero irreducible $\mathcal{S}$-submodule. On one hand $V_i$ is spanned by root vectors because $\mathcal{G}$ is, and on the other hand it contains a unique highest weight vector for $e_{\theta}$. Because $\theta$ is the highest root for $\mathcal{G}$ then \emph{every} root vector $v_{\gamma}$ ($\gamma\in\Phi^+$) is annihilated by $e_{\theta}$, and this means that $V_i$ contains \emph{exactly one} positive root vector, call it $v_{\gamma}$, and exactly one negative root vector, which must be $v_{\gamma-\theta}$. Setting $\beta\df \theta-\gamma \in \Phi^+$ we have $\alpha+\beta=\theta$.

This argument shows that $\dim V_i=2$, thereby confirming the decomposition \eqref{decomp}. Note that we obtain such a $V_i$ whenever $\theta = \alpha+\beta$ is decomposed into a sum of two positive roots, so that the number of $2$-dimensional summands in \eqref{decomp} is indeed equal to $N$.

We now find that
\begin{equation}
  \label{halfform}
N_{\mathcal{G}} = \tfrac{1}{2}(\dim\mathcal{G}-\dim C_{\mathcal{G}}(\mathcal{S})-3).
\end{equation}

Finally, 
\begin{lem}
  \label{lemdimBC}
We have
  \begin{enumerate}
\item If $\mathcal{G}=B_{\ell}$ then $C_{\mathcal{G}}(\mathcal{S})\cong A_1\oplus B_{\ell-2}$;
\item If $\mathcal{G}=C_{\ell}$ then $C_{\mathcal{G}}(\mathcal{S})\cong C_{\ell-1}$.
\end{enumerate}
\end{lem}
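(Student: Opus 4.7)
The plan is a direct root-theoretic computation. First I would establish a general reduction valid for any simple $\mathcal{G}$ with Cartan $\mathfrak{h}$, root system $\Phi$, and highest root $\theta$: that the centralizer of $\mathcal{S}_\theta=\langle e_\theta,f_\theta,h_\theta\rangle$ is
\[
  C_{\mathcal{G}}(\mathcal{S}_\theta) \;=\; \ker\theta \,\oplus\, \bigoplus_{\alpha\in\Phi,\ \alpha\perp\theta}\CC e_\alpha,
\]
so that it is a reductive subalgebra with Cartan $\ker\theta\subset\mathfrak{h}$ (of dimension $\ell-1$) and root system $\Phi_0\df\Phi\cap\theta^\perp$. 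The justification is brief: a root vector $e_\alpha$ commutes with $h_\theta$ iff $\langle\alpha,\theta^\vee\rangle=0$, i.e.\ $\alpha\perp\theta$; and for such a nonzero $\alpha$, neither $\alpha+\theta$ nor $\alpha-\theta$ can lie in $\Phi$ (if $\alpha>0$ this is the maximality of $\theta$ applied to $\alpha+\theta$, and then the Weyl reflection $s_\theta$ rules out $\alpha-\theta$ as well; the case $\alpha<0$ follows by sign symmetry), so $[e_\theta,e_\alpha]=[f_\theta,e_\alpha]=0$ automatically. A Cartan element $h$ centralizes $\mathcal{S}_\theta$ iff $\theta(h)=0$.

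Next I would enumerate $\Phi_0$ case by case in Bourbaki coordinates. For $B_\ell=\mathfrak{so}_{2\ell+1}$ the roots are $\{\pm e_i\}\cup\{\pm e_i\pm e_j\}$ and the highest root is $\theta=e_1+e_2$; the roots orthogonal to $\theta$ split into $\{\pm(e_1-e_2)\}$ (an $A_1$ subsystem) and $\{\pm e_i,\ \pm e_i\pm e_j:3\leq i<j\leq\ell\}$ (a $B_{\ell-2}$ subsystem), and these two subsystems are mutually orthogonal, giving the splitting $A_1\oplus B_{\ell-2}$ in~(a). For $C_\ell=\mathfrak{sp}_{2\ell}$ the roots are $\{\pm 2e_i\}\cup\{\pm e_i\pm e_j\}$ with highest root $\theta=2e_1$; the roots orthogonal to $\theta$ are precisely $\{\pm 2e_i,\ \pm e_i\pm e_j:2\leq i<j\leq\ell\}$, which is the root system of $C_{\ell-1}$, yielding~(b). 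In both cases the rank of $\Phi_0$ already equals $\ell-1=\dim\ker\theta$, so no extra central torus appears and the centralizer is semisimple, as claimed.

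There is no serious obstacle here beyond careful bookkeeping. The one subtle ingredient is the general reduction to $\Phi\cap\theta^\perp$, whose key input is the maximality of $\theta$; everything else is an inspection of the classical root data, together with the minor care needed to handle degenerate small-rank factors (interpreting $B_0=0$, $B_1\cong C_1\cong A_1$, etc.) consistently when $\ell$ is near the boundary of the range under consideration.
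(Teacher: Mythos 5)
Your proof is correct and follows essentially the same route as the paper's: both compute $C_{\mathcal{G}}(\mathcal{S}_\theta)$ by passing to Bourbaki coordinates and identifying the subsystem $\Phi\cap\theta^\perp$. You spell out the reduction (that orthogonality to $\theta$ is the exact criterion, via maximality of $\theta$ and the reflection $s_\theta$) and the absence of a central torus more explicitly than the paper, which simply asserts the answer from inspection of the root data, but the underlying argument is the same.
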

\begin{proof}
  We tackle the case $C_{\ell}$ first. In standard notation (cf. \cite{H}, Section 12) we choose an orthonormal basis $\{e_i\}$ in Euclidean space $\RR^{\ell}$. A root system of type $C_{\ell}$ may then be chosen to consist of $\{\pm e_i\pm e_j \mid i\neq j\}\cup\{\pm 2e_i\}$, and we have $\theta=2e_1$. Then all roots with indices $i$, $j$ greater than $1$ correspond to elements of $C_{\mathcal{G}}(\mathcal{S})$, and these form a root system of type $C_{\ell-1}$. The assertion of the Lemma in this case follows immediately.

Similarly, a root system of type $B_{\ell}$ may be taken to be $\{\pm e_i\pm e_j\mid i \neq j\}\cup\{\pm e_i\}$ and in this case $\theta = e_1+e_2$. Here, all roots with indices greater than $2$ together with $e_1-e_2$ correspond to elements in the centralizer, and the conclusion is that $C_{\mathcal{G}}(\mathcal{S})\cong A_1\oplus B_{\ell-2}$.
\end{proof}

At last we can compute the needed dimensions using Lemma \ref{lemdimBC} and \eqref{halfform}. We find that
\begin{align*}
N_{B_{\ell}}=& \tfrac{1}{2}((2\ell^2+\ell)-(2(\ell-2)^2+(\ell-2)+3)-3)=4\ell-6 \\
N_{C_{\ell}}=& \tfrac{1}{2}((2\ell^2+\ell)-(2(\ell-1)^2+(\ell-1))-3)=  (-(-4\ell+2+(-1))-3)=2\ell-2
\end{align*}
Therefore $N_{B_{\ell}}-N_{C_{\ell}}=2\ell -4$, and this completes the proof of Lemma \ref{lemdimwt2} and thereby that of Theorem \ref{thmBC} also.
\bibliography{refs}{}
\bibliographystyle{plain}
\end{document}